\newcommand{\showdate}{false}
\title{Eta-invariants of extra-twisted connected sums}
\newcommand{\ignore}[1]{}
\DeclareMathOperator{\re}{Re}
\DeclareMathOperator{\ind}{ind}
\DeclareMathOperator{\tr}{tr}
\DeclareMathOperator{\im}{Im}
\DeclareMathOperator{\vol}{vol}
\DeclareMathOperator{\rk}{rk}
\DeclareMathOperator{\supp}{supp}
\DeclareMathOperator{\SF}{sf}
\DeclareMathOperator{\Spec}{spec}
\renewcommand\Re{\operatorname{Re}}
\renewcommand\Im{\operatorname{Im}}
\newcommand\Spin{\mathrm{Spin}}
\renewcommand\div{\operatorname{div}}
\newcommand{\ie}{\emph{i.e.} }
\newcommand{\eg}{\emph{e.g.} }
\newcommand\cf{see }
\newcommand{\sign}{\sigma}
\newcommand{\kd}{\Sigma}
\newcommand{\lagr}{L}
\newcommand{\latc}{\textup{L}}
\newcommand{\ant}{-\id_{S^1}}
\newcommand{\hk}{hyper-Kähler\xspace}
\DeclareMathAlphabet{\matheur}{U}{eur}{m}{n}
\newcommand{\hkr}{\matheur{r}}
\DeclareMathOperator{\Pic}{Pic}
\newcommand{\PP}{\mathbb{P}}
\newcommand{\CP}{\mathbb{CP}}
\newcommand{\APS}{\mathrm{APS}}
\newcommand{\sm}[1]{\left(\begin{smallmatrix} #1 \end{smallmatrix} \right)}
\newcommand{\ahat}{\widehat{A}}
\newcommand{\mmod}{\!\!\mod}
\newcommand{\C}{\mathbb{C}}
\newcommand{\Z}{\mathbb{Z}}
\newcommand{\bbr}{\mathbb{R}}
\newcommand{\R}{\mathbb{R}}
\newcommand{\bbc}{\mathbb{C}}
\newcommand{\bbo}{\mathbb{O}}
\newcommand{\trivr}{\underline{\bbr}}
\newcommand{\del}{\partial}
\newcommand{\gstr}{$G$\nobreakdash-\hspace{0pt}structure}
\newcommand{\gtstr}{$G_{2}$\nobreakdash-\hspace{0pt}structure}
\newcommand{\gtmfd}{$G_{2}$\nobreakdash-\hspace{0pt}manifold}
\newcommand{\norm}[1]{\left\Vert #1\right \Vert}
\newcommand{\anglen}{u}
\newcommand{\anglex}{v}
\newcommand{\lnx}{\xi}
\newcommand{\lnn}{\zeta}
\newcommand{\dirac}{\slashed{\partial}}
\newcommand\abs[1]{\left|#1\right|}
\newcommand\ev{{\mathrm{ev}}}
\newcommand\odd{{\mathrm{odd}}}
\newcommand\id{\mathrm{id}}
\newcommand\Aut{\operatorname{Aut}}
\newcommand\Span{\operatorname{span}}
\newcommand\Sign{\operatorname{sign}}
\newcommand\punkt{\mathord{\,\cdot\,}}
\newcommand\Spinor{S}
\let\eps\varepsilon
\let\phy\varphi
\let\thet\vartheta
\newcommand\K{\Sigma} %
\newcommand\X{X} %
\renewcommand\dirac{D}
\newcommand\D{\mathbb D}
\newcommand\trang{\omega} %
\newtheorem{Thm}{Theorem}
\newtheorem{Cor}[Thm]{Corollary}
\newtheorem{Qstn}[Thm]{Question}
\newtheorem{thm}{Theorem}[section]
\newtheorem{prop}[thm]{Proposition}
\newtheorem{lem}[thm]{Lemma}
\theoremstyle{definition}
\newtheorem{dfn}[thm]{Definition}
\newtheorem{matchingproblem}[thm]{Matching problem}
\theoremstyle{remark}
\newtheorem{rmk}[thm]{Remark}
\newtheorem{ex}[thm]{Example}
\setlist{leftmargin=*}
\newcommand{\thefigures}{
\protect \begin{figure}
\protect \begin{minipage}{0.48\textwidth}
\centering
\begin{tikzpicture}[x=0.8cm,y=0.8cm]
  \multido{\ix=-2+2}{3}{
    \multido{\iy=-2+2}{3}{
      \fill (\ix,\iy) circle (2pt) ;
    }
  }
  \multido{\ix=-1+2}{2}{
    \multido{\iy=-1+2}{2}{
      \fill (\ix,\iy) circle (2pt) ;
    }
  }
  \draw (0,0) -- (0,2) ;
  \draw (0,0) -- (1,-1) ;
  \draw (0,0) -- (1,1) ;
  \draw (0,0) -- (2,0) ;
  \begin{scope}[->, line width=1pt]
    \draw[color=blue] (0,0) -- (90:1.131cm)
	node[above left, color=black] {$\partial_{u_+}$} ;
    \draw[color=blue] (0,0) -- (-45:1.131cm)
	node[below right, color=black] {$\partial_{u_-}$} ;
    \draw[color=red] (0,0) -- (45:1.131cm)
	node[above right, color=black] {$\partial_{v_-}$} ;
    \draw[color=red] (0,0) -- (0:1.131cm)
	node[below right, color=black] {$\partial_{v_+}$} ;
  \end{scope}
  \fill (0,0) circle (3pt) ;
  \draw[->] (0,0) ++(0:0.6cm) arc (0:45:0.6cm) ;
  \node at (22.5:0.85cm) {$\vartheta$} ;
\end{tikzpicture}
\caption{\texorpdfstring{$\vartheta = \displaystyle \frac{\pi}{4}$}{angle pi/4}}
\label{fig:1/4}
\protect \end{minipage}\hfill
\protect \begin{minipage}{0.48\textwidth}
\centering
\begin{tikzpicture}[x=1cm,y=0.577cm]
  \multido{\ix=-2+2}{3}{
    \multido{\iy=-2+2}{3}{
      \fill (\ix,\iy) circle (2pt) ;
    }
  }
  \multido{\ix=-1+2}{2}{
    \multido{\iy=-3+2}{4}{
      \fill (\ix,\iy) circle (2pt) ;
    }
  }
  \draw (0,0) -- (0,2) ;
  \draw (0,0) -- (1,-3) ;
  \draw (0,0) -- (1,1) ;
  \draw (0,0) -- (2,0) ;
  \begin{scope}[->, line width=1pt]
    \draw[color=blue] (0,0) -- (90:1.154cm)
	node[above, color=black] {$\partial_{u_+}$} ;
    \draw[color=blue] (0,0) -- (-60:1.154cm)
	node[right, color=black] {$\partial_{u_-}$} ;
    \draw[color=red] (0,0) -- (30:1.154cm)
	node[above right, color=black] {$\partial_{v_-}$} ;
    \draw[color=red] (0,0) -- (0:1.154cm)
	node[below right, color=black] {$\partial_{v_+}$} ;
  \end{scope}
  \fill (0,0) circle (3pt) ;
  \draw[->] (0,0) ++(0:0.75cm) arc (0:30:0.75cm) ;
  \node at (15:1cm) {$\vartheta$} ;
\end{tikzpicture}
\caption{\texorpdfstring{$\vartheta = \displaystyle \frac{\pi}{6}$}{angle pi/6}}
\label{fig:1/6}
\protect \end{minipage}
\protect \end{figure}
}
\definecolor{darkgreen}{rgb}{0,0.5,0}
\definecolor{darkred}{rgb}{0.7,0,0}
\definecolor{darkblue}{rgb}{0,.2,.7}
\begin{document}

\title{An analytic invariant of $G_2$ manifolds}

\author{Diarmuid Crowley, Sebastian Goette and Johannes Nordstr\"om}

\subjclass[2010]{Primary: 57R20, Secondary: 53C29, 58J28}

\address{\hspace{-12pt}School of Mathematics and Statistics\\
University of Melbourne\\Parkville VIC 3010\\Australia}
\email{dcrowley@unimelb.edu.au}

\address{\hspace{-12pt}Mathematisches Institut\\ Universit\"at Freiburg\\ Ernst-Zermelo-Str.~1, 79104 Freiburg, Germany} \email{sebastian.goette@math.uni-freiburg.de}

\address{\hspace{-12pt}Department of Mathematical Sciences\\
University of Bath\\
Bath BA2 7AY\\
UK}\email{j.nordstrom@bath.ac.uk}

\begin{abstract}
  We prove that the moduli space of holonomy $G_2$-metrics
  on a closed $7$-manifold
  can be disconnected by presenting a number of explicit examples.

  We detect different connected components of the $G_2$-moduli space
  by defining an analytic refinement~$\bar \nu(M, g) \in \Z$
  of the defect invariant~$\nu(M,\phy)\in\Z/48$
  of $G_2$-structures~$\phy$ on a closed 7-manifold~$M$
  introduced by the first and third authors.
  The $\bar \nu$-invariant is defined using $\eta$-invariants and Mathai-Quillen
  currents on~$M$ and we compute it for twisted connected sums \`a la Kovalev,
  Corti-Haskins-Nordstr\"om-Pacini and extra-twisted
  connected sums as constructed by 
  the second and third authors.
  In particular, we find examples of $G_2$-holonomy metrics in different
  components of the moduli space where the associated $G_2$-structures are
  homotopic and other examples
  where they are not. %
\end{abstract}

\maketitle

Recent years have seen progress in the construction and description
of closed $G_2$-manifolds. Apart from Joyce's Kummer
construction~\cite{G2I, Joyce}, one also has the twisted connected sum
construction inspired by Donaldson, implemented by Kovalev~\cite{Kovalev}
and generalised by Corti, Haskins, Nordstr\"om, Pacini~\cite{CHNP}.
Each $G_2$-holonomy metric corresponds to a \gtstr{} that is torsion-free.
With a large supply of $G_2$-holonomy metrics,
one can now ask if some different constructions
\begin{enumerate}
\item\label{Q1} lead to the same closed $7$-manifold up to diffeomorphism?
\item\label{Q2} if so, whether the underlying homotopy classes of
$G_2$-structures are equal (up to spin diffeomorphism)?
\item\label{Q3} if so, whether the two metrics lie in the same connected
component of the moduli space of $G_2$-holonomy metrics over the given class
  of $G_2$-structures?
\end{enumerate}
Note that by homotopy of \gtstr s we always mean the purely topological
condition of being able to connect two \gstr s by a path of \gtstr s.
On the other hand, the condition of two $G_2$ metrics being in the same
component of the moduli space corresponds to being able to join their
torsion-free \gtstr s (up to diffeomorphism) by a path of \gtstr s that are
themselves torsion-free.

For the twisted connected sum construction, Question\:\ref{Q1} is answered
affirmatively by
examples exhibited in \cite[Table 3]{CHNP} and \cite[Table 4]{CrN3},
making use of classification results for 2-connected 7-manifolds
of Wilkens \cite{wilk1, wilk2} (\cf Theorem~\ref{thm:torfreeclass}).
Regarding Question\:\ref{Q2},
two of the authors of this article defined
the $\Z/48$-valued $\nu$-invariant of \gtstr s on closed 7-manifolds~\cite{CrN}.
If the topology of the underlying manifold is sufficiently simple,
then~$\nu$ detects all $G_2$-structures up to homotopy and spin
diffeomorphism, see Proposition \ref{prop:nu_classifies}.
However, so far there have been no explicit examples of spin $7$-manifolds
where two different classes of $G_2$-structures admit $G_2$-holonomy metrics.

This paper is concerned with Questions\:\ref{Q2} and\:\ref{Q3}.
To address them, we introduce an integer-valued refinement~$\bar\nu$ of the
$\nu$-invariant, see Definition~\ref{MainDef}. It is normalised so that for a
metric $g$ with holonomy exactly~$G_2$ on a closed 7-manifold
and $\varphi$ the associated torsion-free \gtstr,
\begin{equation}
\label{eq:relation}
\nu(\varphi) = \bar\nu(g) + 24 \mod 48 .
\end{equation}
The invariant~$\bar\nu(g)$ is defined analytically using $\eta$-invariants
and can be computed from the Riemannian metric~$g$ alone, provided~$g$
has holonomy~$G_2$.
It is locally constant on the moduli space of $G_2$-metrics because
the $\eta$-invariant of the spin Dirac operator depends continuously on
$G_2$-metrics;
in this regard, our invariant resembles Kreck and Stolz's refined
Eells-Kuiper invariant~\cite[Definition~2.12]{KS3}.
On the other hand,
the linear combination of $\eta$-invariants defining $\bar\nu(g)$ does not
define a continuous function on the space of all metrics, nor is it even
integer-valued unless one adds an additional term that depends on a
$G_2$-structure and not just a metric, see Definition~\ref{dfn:nubar}.

A modified version of the twisted connected sum construction is
outlined in Section~\ref{Kap2}.
It involves gluing together a pair of manifolds of the form
$(V_+ \times S^1)/\Gamma_\pm$, where $V_\pm$ is an asymptotically cylindrical
Calabi-Yau manifold on which a cyclic group $\Gamma_\pm \cong \Z/k_\pm$ acts
by automorphisms. A key parameter of such an ``extra-twisted connected sum''
is the constant $\thet\in(0,\pi)$ corresponding to the angle between
the $S^1$ factors under the gluing, see Subsection \ref{subsec:glue}.
The case $k_+ = k_- = 1$ recovers the ordinary twisted
connected sums of \cite{Kovalev} and \cite{CHNP}; in this case $\thet$ is
forced to be $\frac{\pi}{2}$.
The cases when $k_+, k_- \leq 2$ are studied in detail in~\cite{xtcs}.
Note that we do not allow $\thet\in\pi\Z$; that would result in
``untwisted connected sums'' with infinite fundamental group,
which cannot have full holonomy~$G_2$.

Our main result can be stated as follows,
using notation introduced in Sections~\ref{Kap2} and~\ref{Kap3}.
The integer~$m_\rho(\latc;N_+,N_-)$ only depends on the positions
of the images~$N_\pm$ of~$H^2(V_\pm)$ in the K3 lattice~$\latc$
and on the gluing angle~$\thet$, see Definition~\ref{def2.6}.
In Definition \ref{Def:nupm},
we generalise the analytic invariant~$\bar\nu$
to manifolds with boundary, using APS boundary conditions
modified by a natural choice of Lagrangian subspaces
as suggested in~\cite{KiLe}.

\begin{Thm}\label{GluingThm}
  Let~$(M,g)$ be an extra-twisted connected sum.
  Let~$\rho=\pi-2\thet$, then
  \begin{equation*}
    \bar\nu(M, g)
    =\bar\nu(M_+,g)+\bar\nu(M_-,g)-72\frac\rho\pi+3\,m_\rho(\latc;N_+,N_-)\;.
  \end{equation*}
\end{Thm}

The analytic description of $\bar\nu$ makes it (and hence~$\nu$)
explicitly computable for the extra-twisted connected sums of~\cite{xtcs},
where $k_+$ and $k_-$ are both $\leq 2$.
Due to spectral symmetry, the contributions~$\bar\nu(M_\pm,g)$ vanish in these
cases.

\begin{Cor}\label{MainThm}
  Let~$(M,g)$ be an extra-twisted connected sum with $k_\pm\le 2$.
  Then
  \begin{equation*}
    \bar\nu(M, g)
    =-72\frac\rho\pi+3\,m_\rho(\latc;N_+,N_-)\;.
  \end{equation*}
\end{Cor}

For~$\thet=\frac\pi2$, we have~$\rho=0$ and~$m_\rho(\latc;N_+,N_-)=0$.
This gives the following refinement of the claim from
\mbox{\cite[Theorem~1.7]{CrN}} that any rectangular twisted connected sum
has $\nu = 24$.

\begin{Cor}\label{MainCor}
  If~$(M,g)$ is a rectangular twisted connected sum,
  that is, if~$\thet=\frac\pi 2$, then
  \begin{equation*}
    \bar\nu(M,g)=0\;.
  \end{equation*}
\end{Cor}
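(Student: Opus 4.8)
The plan is simply to substitute $\thet=\frac\pi2$ into the formula of Theorem~\ref{MainThm}. With $\thet=\frac\pi2$ we have $\rho=\pi-2\thet=0$, so the first term $-72\frac\rho\pi$ vanishes. The remaining three terms all carry a factor $\Sign\rho$; since $\rho=0$ we have $\Sign\rho=0$, so each of them vanishes as well, regardless of the values of the angles $\alpha_1^-,\dots,\alpha_{19}^-$. Hence $\bar\nu(M,g)=0$.

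The only point that deserves a word is that the expression in Theorem~\ref{MainThm} is genuinely continuous (indeed constant) across $\rho=0$: although the sets $\{\,j\mid\alpha_j^-\in\{\pi-\abs\rho,\pi\}\,\}$ and $\{\,j\mid\alpha_j^-\in(\pi-\abs\rho,\pi)\,\}$ change as $\rho\to0$ — the half-open interval $(\pi-\abs\rho,\pi)$ shrinks to the empty set and the pair $\{\pi-\abs\rho,\pi\}$ degenerates — their cardinalities stay bounded by $19$, so the products with $\Sign\rho$ still tend to $0$ and take the value $0$ at $\rho=0$. Thus no case distinction on the $\alpha_j^-$ is needed. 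I expect no obstacle here; this is a direct specialisation, and the consistency with \cite[Theorem~1.7]{CrN} follows via \eqref{eq:relation}, giving $\nu(\varphi)=24\bmod 48$ for a rectangular twisted connected sum.
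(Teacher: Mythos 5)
Your proposal is correct and is exactly the paper's (implicit) argument: substituting $\thet=\frac\pi2$ gives $\rho=0$, and with the convention $\Sign 0=0$ (stated explicitly at the end of Section~\ref{Abs2.5}) every term in Theorem~\ref{MainThm} vanishes. The continuity remark is unnecessary but harmless.
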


By applying Corollary \ref{MainThm} to examples described in detail
in~\cite[\S 8]{xtcs},
we can prove two statements concerning Questions~\ref{Q2} and~\ref{Q3}
above.
In both theorems, the diffeomorphism type of the manifold $M$ is
completely characterised by the given invariants
(see Theorem \ref{thm:torfreeclass}).

\begin{Thm}\label{Thm4.1}
  There is a closed 2-connected 7-manifold $M$ with~$H^4(M;\Z)\cong\Z^{97}$
  and first Pontrjagin class ~$p_1(M)=4a$ for a primitive class $a \in H^4(M;\Z)$,
  admitting two $G_2$-holonomy metrics whose associated 
  $G_2$-structures are not related by homotopy and diffeomorphism.
\end{Thm}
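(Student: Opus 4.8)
The plan is to realise $M$ as the smooth manifold underlying an extra twisted connected sum in two geometrically inequivalent ways and to separate the two resulting torsion-free $G_2$-structures by the value of $\bar\nu$ supplied by Theorem~\ref{MainThm}. Concretely, I would take from~\cite{xtcs} two extra twisted connected sum configurations, built from matching pairs of building blocks, producing closed $7$-manifolds $(M_0,g_0)$ and $(M_1,g_1)$ with gluing angles $\thet_0\neq\thet_1$; a natural choice is to let $g_0$ come from a rectangular twisted connected sum (so that $\bar\nu(M_0,g_0)=0$ by Corollary~\ref{MainCor}) and $g_1$ from a genuinely extra-twisted sum. The building blocks must be chosen so that both $M_i$ are $2$-connected with $H^4(M_i;\Z)\cong\Z^{67}$ and $p_1(M_i)=4a_i$ for primitive $a_i\in H^4(M_i;\Z)$, and so that all the remaining diffeomorphism invariants entering Theorem~\ref{thm:torfreeclass} agree. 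By that classification theorem this forces a diffeomorphism $M_0\cong M_1$; fixing one and transporting $g_1$, we obtain a single closed $7$-manifold $M$ carrying two $G_2$-holonomy metrics $g_0,g_1$ with underlying torsion-free $G_2$-structures $\varphi_0,\varphi_1$.

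Next I would compute $\bar\nu(M,g_0)$ and $\bar\nu(M,g_1)$ from Theorem~\ref{MainThm}: with $\rho_i=\pi-2\thet_i$ and the angles $\alpha^-_1,\dots,\alpha^-_{19}$ read off from the matching data of the two configurations as in Section~\ref{Abs2.5}, the right-hand side of the formula is an explicit finite expression in this discrete data, so the computation is routine once the blocks are fixed. The blocks are to be arranged so that the two values differ, indeed so that $\bar\nu(M,g_0)\not\equiv\bar\nu(M,g_1)\pmod{48}$. By the normalisation~\eqref{eq:relation}, the $\nu$-invariants of~\cite{CrN} then satisfy $\nu(\varphi_0)\equiv\bar\nu(M,g_0)+24$ and $\nu(\varphi_1)\equiv\bar\nu(M,g_1)+24$ in $\Z/48$, so $\nu(\varphi_0)\neq\nu(\varphi_1)$.

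Finally I would conclude the statement about homotopy and diffeomorphism. A $2$-connected $7$-manifold carries a unique spin structure, and $\nu$ depends only on the homotopy class of the $G_2$-structure and is natural under diffeomorphisms of the underlying spin manifold; hence for every diffeomorphism $\psi$ of $M$ we have $\nu(\psi^*\varphi_1)=\nu(\varphi_1)\neq\nu(\varphi_0)$, so $\psi^*\varphi_1$ is not homotopic to $\varphi_0$ for any $\psi$. That is precisely the assertion that $g_0$ and $g_1$ have different underlying $G_2$-structures up to homotopy and diffeomorphism.

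The main obstacle is not analytic — Theorem~\ref{MainThm} already reduces $\bar\nu$ to the combinatorics of the gluing data — but lies in the choice of building blocks: one must \emph{simultaneously} match every diffeomorphism invariant of Theorem~\ref{thm:torfreeclass} (so that $M_0$ and $M_1$ are genuinely the same manifold, with $H^4$ of the prescribed rank $67$ and $p_1=4a$) while arranging that the congruence $\bar\nu(M,g_0)\equiv\bar\nu(M,g_1)\pmod{48}$ \emph{fails}. Exhibiting an explicit pair of matchings that achieves both — in particular producing building blocks with $H^4$ of rank matching up to $67$ — is exactly where the detailed constructions and tables of~\cite{xtcs} are needed.
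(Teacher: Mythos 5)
Your proposal is correct and follows essentially the same route as the paper: the paper takes the $\thet=\frac\pi4$ extra-twisted connected sum of Example~\ref{ex:pi4rk2} (with $\bar\nu=-36$ by Theorem~\ref{MainThm}) and a rectangular twisted connected sum with the same $(b_3,\div p_1)$ (with $\bar\nu=0$ by Corollary~\ref{MainCor}), identifies them via Theorem~\ref{thm:torfreeclass}, and distinguishes the underlying $G_2$-structures because the $\nu$-invariants then differ mod $48$ via~\eqref{eq:relation}. The one ingredient you leave unspecified --- the explicit building blocks and matching realising the prescribed topology --- is precisely what the paper also supplies only by reference to the examples and to~\cite{xtcs}.
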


In particular, the two $G_2$-holonomy metrics in Theorem \ref{Thm4.1} must
be in different components of the $G_2$ moduli space because of the topological
obstruction. In the next theorem we distinguish components of the moduli
space even when there is no topological obstruction.

\begin{Thm}\label{Thm4.2}
  There is a closed 2-connected 7-manifold $M$ with~$H^4(M;\Z)\cong\Z^{109}$
  and $p_1(M)=4a$ for a primitive class $a \in H^4(M;\Z)$,
  admitting a homotopy class of $G_2$-structures
  over which the moduli space of $G_2$-metrics up to diffeomorphism
  has more than one connected component.
\end{Thm}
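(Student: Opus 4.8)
The strategy is to exhibit two extra twisted connected sums $(M_1,g_1)$ and $(M_2,g_2)$ whose underlying manifolds are diffeomorphic but whose $\bar\nu$-invariants differ by a nonzero multiple of~$48$; since $\bar\nu$ is locally constant on the moduli space of $G_2$-metrics while $\nu$ only depends on $\bar\nu$ modulo~$48$, this produces two metrics on a single manifold whose torsion-free $G_2$-structures lie in a common homotopy class but whose moduli points lie in different components. First I would use the construction of~\cite{xtcs} to choose building blocks, matchings and gluing angles so that the resulting manifolds $M_1,M_2$ have identical invariants: $H^4(M_i;\Z)\cong\Z^{109}$, first Pontryagin class $p_1(M_i)=4a_i$ with $a_i$ primitive, and equal Eells--Kuiper-type invariant. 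By Theorem~\ref{thm:torfreeclass} these invariants determine the diffeomorphism type, so $M_1$ and $M_2$ may be identified with a single $M$ carrying the two $G_2$-holonomy metrics $g_1,g_2$ with associated torsion-free $G_2$-structures $\varphi_1,\varphi_2$.

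Next I would compute $\bar\nu(M,g_1)$ and $\bar\nu(M,g_2)$ via Theorem~\ref{MainThm}, reading off $\rho=\pi-2\thet$ and the angles $\alpha^-_1,\dots,\alpha^-_{19}$ of each configuration as in Section~\ref{Abs2.5}; for instance one of the two could be taken rectangular, so that $\bar\nu(M,g_1)=0$ by Corollary~\ref{MainCor}, while the other is a genuinely extra-twisted sum engineered so that $\bar\nu(M,g_2)=\pm48$. The arithmetic is to be arranged so that
\[
  \bar\nu(M,g_1)-\bar\nu(M,g_2)\in 48\Z\setminus\{0\}.
\]
Then by the normalisation~\eqref{eq:relation} one has $\nu(\varphi_1)=\bar\nu(g_1)+24=\bar\nu(g_2)+24=\nu(\varphi_2)$ in $\Z/48$, so $\varphi_1$ and $\varphi_2$ have the same $\nu$-invariant. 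Invoking the completeness of $\nu$ as an invariant of $G_2$-structures up to homotopy and diffeomorphism on $2$-connected $7$-manifolds of this type~\cite{CrN} — whose hypotheses I would verify for $M$ — there is a diffeomorphism $f$ of $M$ with $f^*\varphi_1$ homotopic to $\varphi_2$. Replacing $g_1$ by $f^*g_1$, which changes neither $\bar\nu$ nor the diffeomorphism class of the metric, I may assume $\varphi_1$ and $\varphi_2$ lie in one homotopy class $[\varphi]$.

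To conclude, $\bar\nu$ is $\Z$-valued and varies continuously with the $G_2$-metric, hence is locally constant on the moduli space of $G_2$-metrics on $M$, and the homotopy class up to diffeomorphism of the parallel $G_2$-structure is likewise locally constant there. Thus the subspace of this moduli space consisting of metrics whose $G_2$-structure lies in $[\varphi]$ is a union of connected components, and on it $\bar\nu$ separates the classes of $g_1$ and $g_2$. Hence the moduli space of $G_2$-metrics over $[\varphi]$ has at least two connected components, which is the assertion.

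The main obstacle is the combined bookkeeping in the first two steps: one must simultaneously match $H^4$, $p_1$ and the Eells--Kuiper invariant of the two extra twisted connected sums while making their $\bar\nu$-values, dictated through Theorem~\ref{MainThm} by $\rho$ and by the distribution of the $\alpha^-_j$ relative to $\{\pi-\abs\rho,\pi\}$ and the interval $(\pi-\abs\rho,\pi)$, differ by a nonzero multiple of~$48$. This is a constrained search over the moduli of admissible building blocks and gluing angles, and it is where the numerology ($b_4=109$, $p_1=4a$) gets pinned down. A secondary but essential point is the appeal to completeness of the $\nu$-invariant: one must confirm that $M$ satisfies the hypotheses under which~\cite{CrN} shows $\nu$ classifies $G_2$-structures up to homotopy and diffeomorphism, so that equality of $\nu$ genuinely delivers a common homotopy class.
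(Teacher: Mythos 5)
Your proposal is correct and follows essentially the same route as the paper: the paper takes the $\thet=\frac\pi6$ extra-twisted connected sum of Example~\ref{ex:pi6rk2} (with $\bar\nu=-48$ by Theorem~\ref{MainThm}) and a rectangular twisted connected sum with matching $b_3=109$ and $\div p_1=4$ (with $\bar\nu=0$ by Corollary~\ref{MainCor}), identifies them via Theorem~\ref{thm:torfreeclass}, uses \eqref{eq:relation} and Proposition~\ref{prop:nu_classifies} to place the two $G_2$-structures in a common homotopy class, and then separates the moduli components by the local constancy of~$\bar\nu$. The only superfluous element in your write-up is the insistence on matching an Eells--Kuiper-type invariant, which is not needed since Theorem~\ref{thm:torfreeclass} classifies these manifolds by $b_3$ and $\div p_1$ alone.
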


While the homotopy classes of the \gtstr s in Theorem~\ref{Thm4.1} can be
distinguished using just the $\nu$-invariant
of~\cite{CrN}, the invariant~$\bar\nu$ is needed to distinguish the components
of the moduli space in Theorem~\ref{Thm4.2}; see Example \ref{ex:pi6rk2}.
However, the only way we know
to compute the $\nu$-invariant of Example \ref{ex:pi4rk2}---on which
Theorem~\ref{Thm4.1} relies---is to use Corollary \ref{MainThm} to
compute~$\bar\nu$ and apply the equation~\eqref{eq:relation}. 
(See Remark \ref{rmk:pi4mixed} for another example of the same type.)

If~$k_\pm\le2$,
then~$\rho\in\bigl\{0,\pm\frac\pi3,\pm\frac\pi2,\pm\frac{2\pi}3\bigr\}$,
and~$\bar\nu(M,g)$ (and hence also~$\nu$) is divisible by~$3$.
Moreover, because both~$\rho$ and the contribution by the angles~$\alpha^-_1$,
\dots, $\alpha^-_{19}$ is bounded,
the invariant~$\bar\nu(M,g)$ can only attain finitely many values for these
extra-twisted connected sums.

\begin{Qstn}
  What is the range of~$\bar\nu$ on arbitrary $G_2$-manifolds?
  Is it finite?
\end{Qstn}

To answer this question, it would be helpful to know the $\bar\nu$-invariant
of Joyce's examples. %
For work in this direction see
Fornasin \cite{fornasin} and Scaduto \cite{scaduto}. %

In the sequel paper~\cite{GN},
the second and the third author compute $\nu$-invariants of
more general extra-twisted connected sums.
In those cases, the relevant operators on~$M_\pm$ no longer exhibit
spectral symmetry, so we can have~$\bar\nu(M_\pm,g)\ne0$ in
Theorem~\ref{GluingThm}.
Moreover, we there find examples where~$3\nmid\bar\nu(M,g)$.

\medskip

This paper is organised as follows.
In Section~\ref{Kap1}, we define the invariant~$\bar\nu$ and give simple
examples.
In Section~\ref{Kap2}, we describe the extra-twisted connected sum construction.
Section~\ref{Kap4} contains the examples mentioned in the theorems above.
We give a cohomological description of the signature $\eta$-invariant
in Section~\ref{Kap6}
using the gluing formula of Kirk and Lesch~\cite{KiLe}.
In Section~\ref{Kap3}, we derive the gluing formula for the spinor
$\eta$-invariant from Bunke's gluing formula~\cite{BuGlu}
and compute~$\bar\nu$ for twisted connected sums.

Except in Section \ref{Kap4}, %
$H^\bullet(\punkt)$ will always refer to
cohomology with real coefficients, which we identify with de Rham cohomology,
and with the space of harmonic forms, if the underlying space
is a compact manifold.

\smallskip
\noindent
{\bf Acknowledgements.} The authors thank Uli Bunke, Alessio Corti,
Mark Haskins, Matthias Lesch, Arkadi Schelling and Thomas Walpuski
for valuable discussions, and the referees for constructive comments.
SG and JN would like to thank the Simons foundation for its support of their
research under the Simons Collaboration on ``Special Holonomy in Geometry,
Analysis and Physics'' (grants \#488617, Sebastian Goette, and \#488631, Johannes
Nordstr\"om).

\section{The extended \texorpdfstring{$\nu$-}{nu-}invariant}\label{Kap1}

We recall from \cite{CrN} the definition of the $\Z/48$-valued invariant
$\nu$ of a \gtstr{} on a closed 7-manifold $M$, involving
a spin zero-bordism~$W$ of~$M$.
Using the Atiyah-Patodi-Singer index theorem,
we give an intrinsic description of~$\nu$ in terms of $\eta$-invariants
and a Mathai-Quillen current on~$M$, %
which moreover allows us to define a diffeomorphism invariant~$\bar\nu$
taking values in~$\Z$.

\subsection{An intrinsic formula}\label{Abs1.a}

Let~$M$ be a closed spin 7-manifold
with tangent bundle $TM\to M$
and with a fixed real spinor bundle~$SM\to M$,
which is of rank~$8$.
A {\em $G_2$-structure\/} on~$M$
can be identified with a Riemannian metric together with a unit spinor
field~$s\in\Gamma(SM)$ (up to sign),
see~\cite[Section~2.2]{CrN}.
We will consider $G_2$-structures up to homotopy and spin diffeomorphism.
Note that homotopy classes of \gtstr s correspond to homotopy classes of
non-vanishing spinor fields.
Hence, we may write~$\nu(s)$ for~$\nu(\phy)$.

As a spin 7-manifold, $M$ can be represented as the spin boundary
of a compact spin 8-manifold~$W$.
Let~$\chi(W)$ and~$\sigma(W)$ denote the Euler characteristic
and the signature of~$W$.
We identify~$SM$ with~$S^+W|_M$ in a natural way and extend~$s$
to~$\bar s\in\Gamma(S^+W)$.
The spinor bundles~$SM$ and~$S^+W$ can be given natural orientations,
where we follow the convention of~\cite[\S2.3]{CrN}.
Assuming that~$\bar s$ is transverse to the zero section~$W\subset S^+W$,
let~$n(\bar s)$ denote the number of zeros of~$\bar s$,
counted with sign.

\begin{dfn}[Crowley--Nordstr\"om {\cite{CrN}}]
\label{def:nu}
  The $\nu$-invariant of~$(M,s)$ is defined as
  \begin{equation*}
    \nu(s)=\chi(W)-3\sigma(W)-2n(\bar s)\in\Z/48\;.
  \end{equation*}
\end{dfn}

In fact, in~\cite[Definition~1.2]{CrN}
the manifold~$W$ is assumed to carry a $\Spin(7)$-structure
that induces the given $G_2$-structure on~$M=\partial W$.
We find it more convenient to work without this restriction.
This introduces the additional term~$-2n(\bar s)$,
see~\cite[Section~3.2]{CrN}.

It is proved in~\cite{CrN} that~$\nu$ is a well-defined invariant of
$G_2$-structures. It is patently invariant under homotopies and
spin diffeo\-morphism.
Moreover, for certain topologically simple 7-manifolds, 
$\nu$ is a complete invariant of $G_2$-structures up to homotopy and
spin diffeo\-morphism, \cf Proposition~\ref{prop:nu_classifies}.

We fix a Riemannian metric~$g^{TM}$,
which induces a metric~$g^{SM}$ and a connection~$\nabla^{SM}$ on~$SM$.
We also fix a Riemannian metric~$g^{TW}$ on~$W$
such that a collar neighbourhood of~$M=\partial W$ is isometric
to a product~$(M,g^{TM})\times[0,\eps)$.
Then we have an induced metric~$g^{S^+W}$ and a connection~$\nabla^{S^+W}$
on~$S^+W$ that restrict to~$g^{SM}$ and~$\nabla^{SM}$ over~$M=\partial W$.
Let~$e(\nabla^{S^+W})$ denote the Chern-Weil Euler form of~$S^+W$.

If~$Y\subset X$ is an oriented submanifold of an oriented manifold~$X$,
we regard the Dirac $\delta$-distribution along~$Y$ as a locally
integrable current~$\delta_Y$ of degree~$\dim X-\dim Y$ such that
\begin{equation*}
  \int_X\delta_Y\wedge\alpha=\int_Y\alpha|_Y
\end{equation*}
for all compactly supported~$\alpha\in\Omega^\bullet(X)$.
Let~$\psi(\nabla^{S^+W},g^{S^+W})$ denote
the Mathai-Quillen current on the total space of~$\pi\colon S^+W\to W$.
It is a locally integrable current of degree~$7$ on~$S^W$,
defined in~\cite[Section~7]{MQ}
and explained further in~\cite[Section~3,a--d]{BZtor}.
By~\cite[Theorem~3.7]{BZtor}, its exterior derivative as a current is given by
\begin{equation*}
  d\psi\bigl(\nabla^{S^+W},g^{S^+W}\bigr)
  =\pi^*e\bigl(\nabla^{S^+W}\bigr)-\delta_W\;,
\end{equation*}
where~$W\subset S^+W$ denotes the zero section.
Assume that~$\bar s\in\Gamma(S^+W)$ is transversal to the zero section as above.
Then by~\cite[Remark~3.8]{BZtor}, $\bar s^*\psi(\nabla^{S^+W},g^{S^+W})$
is again locally
integrable on~$W$ and satisfies
\begin{equation}\label{eq:psi}
  d\bigl(\bar s^*\psi\bigl(\nabla^{S^+W},g^{S^+W}\bigr)\bigr)
  =e\bigl(\nabla^{S^+W}\bigr)-\delta_{\bar s^{-1}(0)}\;,
\end{equation}
where the orientation of~$\bar s^{-1}(0)\subset W$ is determined
by the sign of~$\det d\bar s$.

Let~$D_M$ be the spin Dirac operator acting on~$\Gamma(SM)$,
and let~$B_M$ denote the odd signature operator
acting on~$\Omega^{\mathrm{ev}}(M)$.
Let~$\eta$ denote the Atiyah-Patodi-Singer $\eta$-invariant~\cite{APS1},
and write~$h(A)$ for the dimension of the kernel of an operator~$A$.

\begin{thm}\label{Thm.1}
For any metric $g$ and non-vanishing spinor field $s$ on a closed
7-manifold $M$,
	$$\nu(s)=2\int_Ms^*\psi\bigl(\nabla^{SM},g^{SM}\bigr)
		-24\,(\eta+h)(D_M)+3\,\eta(B_M)\in\Z/48\;.$$
\end{thm}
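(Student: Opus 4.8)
The plan is to compare the two expressions for $\nu(s)$ by applying the Atiyah--Patodi--Singer index theorem on the spin $8$-manifold $W$ with $\partial W = M$, using the product structure near the boundary so that the APS boundary conditions are in force. First I would recall how the definition $\nu(s) = \chi(W) - 3\sigma(W) - 2n(\bar s)$ arises. The signature term is handled by Hirzebruch's signature theorem with boundary correction: $\sigma(W) = \int_W L - \eta(B_M)$, where $L$ is the Hirzebruch $L$-form and $B_M$ is the odd signature operator on $\Omega^{\mathrm{ev}}(M)$ (note the product collar makes the signature operator on $W$ into a boundary-value problem whose defect is exactly $\eta(B_M)$). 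The Euler characteristic term is handled by the Gauss--Bonnet theorem with boundary: since the collar is a product, the boundary term in Gauss--Bonnet vanishes and $\chi(W) = \int_W e(\nabla^{TW})$. For the Dirac operator on $W$ coupled to nothing (or, equivalently, viewing $\mathrm{ind}\, D_W^{\mathrm{APS}}$ as the $\ahat$-genus with $\eta$-correction), APS gives $\mathrm{ind}\, D_W = \int_W \ahat(W) - \tfrac12(\eta + h)(D_M)$.

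The heart of the argument is to reinterpret $n(\bar s)$ analytically via the Mathai--Quillen current. Since $\bar s \in \Gamma(S^+W)$ is transverse to the zero section, $n(\bar s)$ is the signed count of zeros, which by the transgression equation $d\psi(\nabla^{S^+W},g^{S^+W}) = \pi^* e(\nabla^{S^+W}) - \delta_0$ can be written, after pulling back by $\bar s$ and using Stokes on $W$ (with boundary $M$), as
\begin{equation*}
  n(\bar s) = \int_W e(\nabla^{S^+W}) - \int_M s^*\psi(\nabla^{SM}, g^{SM}),
\end{equation*}
where the boundary term produces precisely the intrinsic Mathai--Quillen integral over $M$ appearing in the statement (here one uses that $\bar s|_M = s$ and that the collar is a product, so the pulled-back current restricts correctly). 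Substituting the three index-theoretic identities and this formula into $\chi(W) - 3\sigma(W) - 2n(\bar s)$, the curvature integrals over $W$ collect into a single characteristic-form integral $\int_W\bigl(e(\nabla^{TW}) - 3L - 2e(\nabla^{S^+W})\bigr)$, while the boundary contributions assemble into $-3\eta(B_M) + 2\int_M s^*\psi$; the Dirac $\eta$-term enters with a factor $24$ once one relates this characteristic combination (over a closed spin $8$-manifold) to $48\,\ahat$. Indeed, on any closed spin $8$-manifold the combination $e(TW) - 3L - 2e(S^+W)$ equals $48\,\ahat(W)$ up to exact forms — this is the pointwise characteristic-class identity underlying the original $\mathbb Z/48$ statement — so the $W$-integral is $48$ times an integer (namely $48\,\mathrm{ind}\,D_W$ plus the boundary $\ahat$-defect), hence vanishes mod $48$ and simultaneously contributes the $-24(\eta+h)(D_M)$ term via the APS correction.

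The main obstacle will be the careful bookkeeping of orientations, normalizations, and the APS boundary terms so that all three $\eta$-type contributions appear with the stated coefficients $-24$, $+3$, and $+2$; in particular one must check that the Mathai--Quillen current conventions of \cite{MQ, BZtor} match the sign conventions for $n(\bar s)$ and $e(\nabla^{S^+W})$ used in \cite{CrN}, and that the factor-of-two relating $\eta(D_W)$-defects to $(\eta+h)(D_M)$ is tracked consistently with the rank-$8$ real spinor bundle and the identification $SM \cong S^+W|_M$. A secondary point is to confirm that the combination of curvature forms is genuinely $48\,\ahat$ and not merely proportional to it on cohomology — this requires the pointwise (form-level) identity, which holds because all the relevant characteristic forms are built from the Levi-Civita connection on the product collar, so the transgressions are controlled. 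Once these conventions are pinned down, the result follows by direct substitution and reduction mod $48$.
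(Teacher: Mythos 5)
Your proposal matches the paper's proof in all essentials: Gauss--Bonnet and the APS index theorems for the signature and spin Dirac operators, the Mathai--Quillen transgression plus Stokes to convert $n(\bar s)$ into $\int_W e\bigl(\nabla^{S^+W}\bigr)-\int_Ms^*\psi$, and the form-level identity $2e\bigl(\nabla^{S^+W}\bigr)=48\,\ahat+e\bigl(\nabla^{TW}\bigr)-3L$ (the paper's equation~\eqref{1.2}, cited from~\cite{CrN} with naturality of Chern--Weil forms), after which adding $48\ind_{\mathrm{APS}}(D^+_W)\equiv 0\bmod 48$ produces the $-24(\eta+h)(D_M)$ term. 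The bookkeeping concerns you flag are exactly the points the paper handles by convention, so the argument is correct and essentially identical.
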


\begin{proof}
  Let~$\ahat(\nabla^{TW})$ and~$L(\nabla^{TW})$ be the Chern-Weil forms
  representing the $\ahat$- and the Hirzebruch $L$-class on~$W$,
  constructed from the Levi-Civita connection on~$(W,g^{TW})$.
  With our orientation convention for~$S^+W$,
  we have
  \begin{equation}\label{1.2}
    2e\bigl(\nabla^{S^+W}\bigr)
    =48\,\ahat\bigl(\nabla^{TW}\bigr)^{[8]}
    +e\bigl(\nabla^{TW}\bigr)-3\,L\bigl(\nabla^{TW}\bigr)^{[8]}
    \in\Omega^8(W)\;.
  \end{equation}
  This follows from~\cite[equation~(1)]{CrN} and the naturality of Chern-Weil forms.
  Let~$D^+_W$ %
  denote the spin Dirac operator on~$W$,
  and let~$\ind_{\mathrm{APS}}(D^+_W)$ denote its index under
  APS boundary conditions, see~\cite[(2.3)]{APS1}.
  By the Atiyah-Patodi-Singer index theorem~\cite[Theorems~4.2 and~4.14]{APS1}
  and equation~\eqref{eq:psi} for the Mathai-Quillen current,
  \begin{align}
    \begin{split}\label{1.3}
      \ind_{\mathrm{APS}}\bigl(D^+_W\bigr)
      &=\int_W\ahat\bigl(\nabla^{TW}\bigr)^{[8]}
	-\frac{\eta+h}2(D_M)\quad\in\Z\;,\\
      \sigma(W)
      &=\int_WL\bigl(\nabla^{TW}\bigr)^{[8]}-\eta(B_M)\quad\in\Z\;,\\
      n(\bar s)
      &=\int_W\delta_{\bar s^{-1}(0)}
      =\int_We\bigl(\nabla^{S^+W}\bigr)
      -\int_Ms^*\psi\bigl(\nabla^{SM},g^{SM}\bigr)\quad\in\Z\;.
    \end{split}
  \end{align}
  Definition~\ref{def:nu} together with the Gau\ss-Bonnet-Chern theorem
  and~\eqref{1.2}, \eqref{1.3} gives
  \begin{align*}
    \nu(s)
    &\equiv\chi(W)-3\,\sign(W)+48\,\ind_{\mathrm{APS}}(D^+_W)-2n(\bar s)&&\mod 48\\
    &=2\int_Ms^*\psi\bigl(\nabla^{SM},g^{SM}\bigr)
	-24\,(\eta+h)(D_M)+3\,\eta(B_M)\;.&&\qedhere\\
  \end{align*}
\end{proof}

\subsection{The extended \texorpdfstring{$\nu$}{nu}-invariant}\label{Abs1.b}

We first restrict attention to the special case of metrics with holonomy
contained in~$G_2$. Because the defining spinor~$s\in\Gamma(SM)$
is associated with the $G_2$-principal bundle,
this immediately implies~$\nabla^{SM}s=0$.

\begin{lem}\label{Lemma3.3}
  If~$s$ is parallel, then
	$$\int_Ms^*\psi\bigl(\nabla^{SM},g^{SM}\bigr)=0\;.$$
\end{lem}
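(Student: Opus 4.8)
The plan is to prove the stronger assertion that the $7$-form $s^*\psi(\nabla^{SM},g^{SM})$ vanishes identically, by exploiting the structural consequences of $s$ being parallel. First I would note that parallelism forces $|s|$ to be constant: $d\,g^{SM}(s,s)=2\,g^{SM}(\nabla^{SM}s,s)=0$, so since $s$ is a unit spinor field we have $|s|\equiv 1$. Hence $s$ takes values in the unit sphere bundle and factors as $s=\iota\circ\bar s$, where $\bar s\colon M\to S(SM)$ is a smooth section and $\iota\colon S(SM)\hookrightarrow SM\setminus\{0\}$ is the inclusion. In particular the image of $s$ is disjoint from the zero section, so $s^*\psi(\nabla^{SM},g^{SM})$ is a genuine smooth form (not merely a current), equal to $\bar s^*\bigl(\iota^*\psi(\nabla^{SM},g^{SM})\bigr)$.

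The second step uses the explicit description of the Mathai--Quillen current on the complement of the zero section from \cite[Section~7]{MQ} (see also \cite[Section~3]{BZtor}): there $\psi(\nabla^{SM},g^{SM})$ is a smooth Chern--Weil type form, a universal linear combination of contractions of the unit tautological section $\bar x=x/|x|$ of $\pi^*SM$, its covariant derivative $\nabla\bar x$, and the curvature $F^{SM}$, each summand being of the shape $\langle\bar x,(\nabla\bar x)^{\wedge a}\wedge(F^{SM})^{\wedge b}\rangle$ with $a+2b=\rk(SM)-1=7$. The key observation is a parity count: since $7$ is odd while $2b$ is even, one must have $a\ge 1$, so \emph{every} summand contains at least one factor of $\nabla\bar x$. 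Pulling back along $\bar s$, we have $\bar s^*\bar x=s$ and $\bar s^*(\nabla\bar x)=\nabla^{SM}s=0$ (since $\pi\circ\bar s=\id_M$ identifies $\bar s^*\pi^*SM$ with $SM$ and the pulled-back connection with $\nabla^{SM}$); hence each summand pulls back to zero. Therefore $s^*\psi(\nabla^{SM},g^{SM})=0$, and a fortiori its integral over $M$ vanishes.

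I expect the only delicate point to be this second step: recalling the precise form of the Mathai--Quillen current near the zero section and making the parity count rigorous. Conceptually the vanishing just reflects that a parallel unit spinor reduces the structure group of $(SM,\nabla^{SM})$ from $\mathrm{SO}(8)$ to $\mathrm{SO}(7)$, with $H^7(B\mathrm{SO}(7);\bbr)=0$. An alternative would be to split $SM=\bbr\langle s\rangle\oplus s^{\perp}$ as a bundle with metric and connection, the rank-one summand being flat, and to invoke multiplicativity of $\psi$ under direct sums; but that involves more bookkeeping with $\delta$-currents than the argument above.
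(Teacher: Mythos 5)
Your proof is correct and takes essentially the same approach as the paper's: the paper writes $\psi$ via the Bismut--Zhang Berezin-integral formula and observes that when $\nabla^{SM}s=0$ the integrand has odd degree in $\Lambda^\bullet\widehat{SM}$ while $\rk SM=8$ is even, which is precisely your parity count $1+a+2b=8\Rightarrow a\ge 1$, forcing a factor of $\nabla\bar x$ in every term. The only cosmetic difference is that you expand the exponential into monomials on the sphere bundle, whereas the paper applies the Berezin integral to the exponential directly.
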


\begin{proof}
  Let~$\bigl(\widehat{SM},\nabla^{\widehat{SM}},g^{\widehat{SM}}\bigr)\to M$
  be  an isomorphic copy of~$SM\to M$.
  We regard the curvature~$R^{\widehat{SM}}$
  as an element of~$\Omega^2(M;\Lambda^2\widehat{SM})$.
  Let~$\widehat Y \in\Gamma(\pi^*\widehat{SM})$ denote the tautological section
  of~$\pi^*\widehat{SM}\to SM$.
  Then~$\nabla^{\pi^*\widehat{SM}}\widehat Y$ projects a vector on~$SM$
  to its vertical component,
  regarded as an element of~$\widehat{SM}$.
  The Berezin integral
	$$\int^B\colon\Omega^\bullet(SM;\pi^*\Lambda^\bullet\widehat{SM})
	\to\Omega^\bullet(SM)$$
  is defined as a certain constant multiple of the top degree component
  in~$\pi^*\Lambda^\bullet\widehat{SM}$, see~\mbox{\cite[(3.1)]{BZtor}}.
  The precise value of the constant will not matter in the following.

  By~\cite[Definition~3.6]{BZtor}, the Mathai-Quillen current is given as
  \begin{equation*}
    \psi\bigl(\nabla^{SM},g^{SM}\bigr)
    =\int_0^\infty\int^B\frac{\widehat Y}{2\sqrt t}
	\,e^{-\pi^*R^{\widehat{SM}}+\sqrt t\,\nabla^{\pi^*\widehat{SM}}\widehat Y+t\|\widehat Y\|^2}\,dt\;.
  \end{equation*}
  For the pullback by~$s\in\Gamma(SM)$,
  we consider the section~$\hat s=s^*\widehat Y\in\Gamma(\widehat{SM})$.
  Then
  \begin{equation*}
    s^*\psi\bigl(\nabla^{SM},g^{SM}\bigr)
    =\int_0^\infty\int^B\frac{\hat s}{2\sqrt t}
	\,e^{-R^{\widehat{SM}}+\sqrt t\,\nabla^{\widehat{SM}}\hat s+t\|\hat s\|^2}\,dt\;.
  \end{equation*}

  If~$s$ is parallel, then~$\nabla^{\widehat{SM}}\hat s=0$.
  As a consequence,
  the exponential expression has even degree in~$\Lambda^\bullet\widehat{SM}$.
  The additional~$\hat s$ makes the degree in~$\Lambda^\bullet\widehat{SM}$ odd.
  Because~$\rk\widehat{SM}=8$ is even, the Berezin integral vanishes entirely.
\end{proof}

If the spinor~$s$ is parallel,
then the holonomy of~$(M,g)$ is contained in~$G_2$.
Then~$M$ is Ricci flat and the scalar curvature vanishes, too.
By the Schr\"odinger-Lichnerowicz formula,
\begin{equation*}
  D_M^2=\nabla^{SM,*}\nabla^{SM}\;,
\end{equation*}
so~$\ker(D_M)$ consists entirely of parallel spinors if~$M$ is compact.

The spinor representation of $G_2$ is isomorphic to a direct sum of the
(7-dimensional) vector representation and a rank one trivial part.
Therefore $SM \cong TM \oplus \trivr \cong T^*M \oplus \trivr$.
Indeed, Clifford multiplication with~$s$ defines a parallel isomorphism
from~$TM$ to the subbundle of~$SM$ that is perpendicular to~$s$.
On a closed Ricci-flat manifold, a 1-form is parallel if and only if it is
harmonic. Thus, for a closed manifold with holonomy contained in $G_2$
\begin{equation}
\label{eq:hDM} h(D_M) = 1 + b_1(M) .
\end{equation}
If the holonomy is exactly $G_2$ then $\pi_1 M$ is finite, so $h(D_M) = 1$,
but more generally we at least have that $h(D_M)$ is a topological invariant
if the holonomy is contained in~$G_2$. %
This motivates the following definition.

\begin{dfn}\label{MainDef}
  Let~$(M,g)$ be a Riemannian 7-manifold with holonomy contained in~$G_2$.
  Then the extended %
  $\nu$-invariant is given by
	$$\bar\nu(M,g)=%
		-24\,\eta(D_M)+3\,\eta(B_M)\in\Z\;.$$
\end{dfn}

\begin{prop}\label{Prop3.5}
  The extended $\nu$-invariant has the following properties.
  \begin{enumerate}
  \item\label{3.5.0}
    It is a spin diffeomorphism invariant of Riemannian manifolds
    with holonomy in~$G_2$.
  \item\label{3.5.1}
    It is locally constant on
    the moduli space of metrics with holonomy in~$G_2$.
  \item\label{3.5.2}
    If~$(M,g)$ admits an orientation reversing isometry,
    then~$\bar\nu(g)=0$.
  \end{enumerate}
\end{prop}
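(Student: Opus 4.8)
The plan is to deduce all three parts from the behaviour of the $\eta$-invariants of $D_M$ and $B_M$ under isometries, smooth deformations of the metric, and orientation reversal. Part~\ref{3.5.0} is immediate: a spin diffeomorphism carrying one holonomy-$G_2$ metric to another is in particular an orientation- and spin-structure-preserving isometry, hence conjugates $D_M$ to $D_{M'}$ and $B_M$ to $B_{M'}$; the operators then have the same spectra with multiplicities, so $\eta(D_M)=\eta(D_{M'})$, $\eta(B_M)=\eta(B_{M'})$, and $\bar\nu(M,g)=\bar\nu(M',g')$.

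For part~\ref{3.5.1}, first note that $\bar\nu(M,g)\in\Z$: by Theorem~\ref{Thm.1} and Lemma~\ref{Lemma3.3}, on a holonomy-$G_2$ manifold $-24\,\eta(D_M)+3\,\eta(B_M)$ is congruent modulo~$48$ to the integer $\nu(s)+24\,h(D_M)$, hence is an integer. Now let $g_t$ be a smooth path of metrics with holonomy in $G_2$ on a fixed~$M$. By~\eqref{eq:hDM}, $h(D_{M,g_t})=1+b_1(M)$, and likewise $h(B_{M,g_t})=\sum_{i\ \mathrm{even}}b_i(M)$; both are topological constants, so no eigenvalue of $D_{M,g_t}$ or of $B_{M,g_t}$ can meet zero along the path (which would force $h$ to jump). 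In the absence of spectral flow $\eta(D_{M,g_t})$ and $\eta(B_{M,g_t})$ depend continuously on~$t$, hence so does $\bar\nu(M,g_t)$, which is therefore constant, being continuous and integer-valued. Since the moduli space of $G_2$-holonomy metrics is locally a smooth manifold (Joyce~\cite{Joyce}), metrics in the same component are joined by such paths, so $\bar\nu$ is locally constant.

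For part~\ref{3.5.2}, let $f$ be an orientation-reversing isometry of $(M,g)$, and write $\overline M$ for the same Riemannian manifold with the opposite orientation (and the induced spin structure). Regarding $f$ as an orientation-preserving isometry $(M,g)\to\overline M$ gives, as in part~\ref{3.5.0}, $\eta(D_M)=\eta(D_{\overline M})$ and $\eta(B_M)=\eta(B_{\overline M})$. On the other hand, reversing the orientation of an odd-dimensional spin manifold replaces $D_M$ by $-D_M$ and $B_M$ by $-B_M$: the odd signature operator is conjugated to its negative because orientation reversal anticommutes with the Hodge star, while for the spin Dirac operator the two orientations correspond to the two inequivalent irreducible Clifford modules, which differ by a sign on Clifford multiplication. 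Hence $\eta(D_{\overline M})=-\eta(D_M)$ and $\eta(B_{\overline M})=-\eta(B_M)$, so $\eta(D_M)=\eta(B_M)=0$ and $\bar\nu(M,g)=0$. The one point that needs care here is the spin-structure bookkeeping: one must verify that the spin structure $f$ induces on $\overline M$ is precisely the one for which $D_{\overline M}=-D_M$, and not a twist of it by a class in $H^1(M;\Z/2)$, so that the two relations above may legitimately be combined; this is where the holonomy hypothesis enters, via the uniqueness of the spin lift of the $G_2\subset\Spin(7)$ reduction together with $\dim M=7$ being odd. I expect this to be the only real obstacle, the rest being routine.
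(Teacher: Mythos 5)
Your proof is correct and follows essentially the same route as the paper: part~(i) by conjugation-invariance of the spectra, part~(ii) by combining integrality of $\bar\nu$ (via Theorem~\ref{Thm.1} and Lemma~\ref{Lemma3.3}) with continuity of the two $\eta$-invariants, which rests on the topological constancy of the kernels, and part~(iii) by spectral symmetry under an orientation-reversing isometry. The only (harmless) divergences are that for the continuity of $\eta(B_M)$ the paper invokes the APS signature formula~\eqref{1.3} with a fixed nullbordism rather than your no-spectral-flow argument from $h(B_M)=\sum_{i\,\mathrm{even}}b_i(M)$, and that the paper does not address the spin-structure lifting point you flag in~(iii), treating the symmetry of the spectrum of $D_M$ as immediate.
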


\begin{proof}
  Property~\ref{3.5.0} is clear by construction.

  For~\ref{3.5.1}, we use that~$h(D_M)$ is
  constant on the moduli space of metrics with holonomy in~$G_2$,
  so no eigenvalue of~$D_M$ can change sign over this moduli space.
  Then~$\eta(B_M)$ and~$\eta(D_M)$ are continuous in~$g$ by~\eqref{1.3},
  hence~$\bar\nu(M,g)$ is locally constant.

  For~\ref{3.5.2}, we use that both $\eta$-invariants vanish because
  an orientation reversing isometry ensures that the spectra of~$D_M$
  and~$B_M$ are symmetric.
\end{proof}

Combining Theorem \ref{Thm.1}, Lemma \ref{Lemma3.3} and \eqref{eq:hDM} we find
that for any metric $g$ with holonomy contained in $G_2$ and any spinor
$s$ that is parallel with respect to $g$ (equivalently any torsion-free
\gtstr{} compatible with $g$)
\begin{equation}
\nu(s) = \bar \nu(g) + 24(1 + b_1(M)) \mod 48 .
\end{equation}

\subsection{Homogeneous examples}\label{Abs1.c}

Leaving the world of metrics with holonomy in~$G_2$ for the moment,
we can define a version of the extended $\nu$-invariant for more general
\gtstr s. It is most meaningful in the case when the metric has positive
scalar curvature, implying $h(D_M) = 0$ by the Schr\"odinger-Lichnerowicz formula.

\begin{dfn}\label{dfn:nubar}
  Let~$(M,g,s)$ be a Riemannian 7-manifold with a nowhere vanishing
  spinor~$s$.
  Then the extended %
  $\nu$-invariant is defined as
	$$\bar\nu(M,g,s)=2\int_Ms^*\psi\bigl(\nabla^{SM},g^{SM}\bigr)
		-24\,\eta(D_M)+3\,\eta(B_M)\in\Z\;.$$
\end{dfn}

\begin{rmk} We distinguish three cases.
  \begin{enumerate}
  \item\label{Case1} If~$s$ is parallel,
    this is exactly Definition~\ref{MainDef} (using Lemma \ref{Lemma3.3}).
  \item\label{Case2} If~$(M,g)$ has positive scalar curvature,
    then~$\bar\nu(M,g,s)$ is invariant under deformations of~$g$ and~$s$
    as long as positive scalar curvature is preserved.
    In this case, $\bar\nu\mmod48$ equals the invariant~$\nu(s)$
    from Definition \ref{def:nu}.
    The situation is superficially similar to case~\ref{Case1}.
    However, there is no direct link between the spinor and the metric.
    A similar extension of the Eells-Kuiper invariant was considered
    by Kreck and Stolz in~\cite{KS3}.
  \item\label{Case3} Without any additional assumption on~$(M,g,s)$,
    the number~$\bar\nu(M,g,s)$ can jump by multiples of~$24$ under continuous
    deformations of $g$ and is hence less powerful than~$\nu(s)$. However, we
    still have $\nu(s) = \bar \nu(g,s) + 24h(D_M) \mmod 48$.
  \end{enumerate}
\end{rmk}

\begin{ex}
Consider the Berger space~$M=SO(5)/SO(3)$
with its normal homogeneous metric~$g$,
which is of positive scalar curvature.
Its diffeomorphism type has been determined in~\cite{GKS}
with the help of a homogeneous $G_2$-structure.
The $\eta$-invariants~$\eta(D_M)$ and~$\eta(B_M)$
have almost been computed in~\cite[Corollary~2.5]{GKS},
in the following sense.
Let~$\widetilde L$ denote
the Chern-Simons form for the $L$-class characterised
by~$d\widetilde L(\nabla^0,\nabla^1)
=L(\nabla^1)-L(\nabla^0)$,
and let~$\skew6\widetilde{\widehat A}$ be the analogous form for~$\ahat$,
then
\begin{align*}
	\eta(D_M)
	&=-\frac{12923}{2\;3^2\;5^6}
		+2\int_M\skew6\widetilde{\widehat A}\left(\nabla^0,\nabla^{TM}\right)\;,\\
		\text{and }\quad
	\eta(B_M)
	&=-\frac{4817}{3^2\;5^6}
		+\int_M\widetilde L\left(\nabla^0,\nabla^{TM} \right)\;.
\end{align*}
Here, $\nabla^{TM}$ is the Levi-Civita connection with respect to the normal
homogeneous metric on~$M$, and~$\nabla^0$ is the reductive connection.
Note that the factor of~$2$ in front of the correction term in the first line
is missing in~\cite{GKS},
and note also that in degree~$7$, the class~$\skew3\widetilde{\widehat L}$
in~\cite{GKS} agrees with~$\widetilde L$.

The homogeneous $G_2$-structure corresponds
to a section~$s\in\Gamma(SM)$
that is parallel with respect to the reductive connection~$\nabla^0$,
so by Lemma~\ref{Lemma3.3}, we get
	$$s^*\psi\bigl(\nabla^0,g^{SM}\bigr)=0\;.$$
By the variation formula for Mathai-Quillen currents~\cite{BZtor}
and~\eqref{1.2},
this implies
\begin{equation*}
  2s^*\psi\bigl(\nabla^{SM},g^{SM}\bigr)
  =2\tilde e\left(\nabla^0,\nabla^{SM}\right)
  =48\skew6\widetilde{\widehat A}\left(\nabla^0,\nabla^{TM}\right)^{[7]}
	-3\widetilde L\left(\nabla^0,\nabla^{TM}\right)^{[7]}\;.
\end{equation*}
Note that the variation of the Euler form of~$TM$ does not appear here
because~$TM$ is odd-dimensional.
  Hence, the homogeneous $G_2$-structure on the Berger space satisfies
	$$\bar\nu(M,g,s)
  =24\cdot\frac{12923}{2\;3^2\;5^6}-3\cdot\frac{4817}{3^2\;5^6}
  =1\;.$$
\end{ex}

\begin{ex}
On the round sphere,
we can construct $G_2$-structures using two kinds of Killing spinors~$s_\pm$.
Both give rise to homogeneous structures~$S^7=\Spin(7)/G_2$.
These examples have been discussed in~\cite[Example~1.14]{CrN}.
We fix the round metric~$g$ on~$S^7$,
for which~$\eta(D_{S^7})=\eta(B_{S^7})=0$.
It is not hard to check that~$\bar\nu(S^7,g,s_\pm)=\pm1$.

\end{ex}

\pagebreak[2]

\section{Twisted Connected Sums}\label{Kap2}

We recall the twisted connected sum construction of~\cite{Kovalev, CHNP}
and describe extra-twisted connected sums, see~\cite{xtcs}.
These manifolds are $G_2$-holonomy manifolds glued together from two
7-manifolds with holonomy~$SU(3)$ and an asymptotically cylindrical end,
where the gluing is ``twisted'' in such a way that the resulting manifold
can be equipped with a metric of holonomy~$G_2$.

\subsection{ACyl Calabi-Yau 3-folds with automorphisms}\label{subsec:acyl}

Let us first describe the pieces to be used in the gluing construction.

Let~$\K$ be a K3 surface. By a \hk structure on $\K$ we mean a triple
of closed 2-forms~$\omega^I, \omega^J, \omega^K$ such that
\[  (\omega^I)^2 = (\omega^J)^2 = (\omega^K)^2\text{ vanishes nowhere and }
\omega^I \wedge \omega^J = \omega^J \wedge \omega^K = \omega^K \wedge \omega^I = 0. \]
For such a \hk triple, there exists a Ricci-flat metric $g^\K$ and
integrable complex structures $I, J, K$ such that $g^\K$ is a Kähler metric
with Kähler form $\omega^I$, $\omega^J$ and $\omega^K$ respectively.
Also, $\omega^J + i \omega^K$ is a holomorphic 2-form with respect to $I$.

For~$\lnn>0$,
let~$S^1_{\lnn}=\R/\lnn\Z$ denote a circle of length~$\lnn$.
On~$\R_+\times S^1_{\lnn}$, we define a complex structure~$I_\C$
such that~$I_\C\del_t=\del_\anglen$ and~$I_\C\del_\anglen=-\del_t$,
where~$t$ and~$\anglen$ are coordinates on~$\R_+$ and~$S^1_{\lnn}$, respectively.
The corresponding K\"ahler form is~$dt\wedge d\anglen$.

By~\cite[Definition~3.3]{CHNP}, an asymptotically cylindrical Calabi-Yau
$3$-fold is a complex $3$-dimensional Calabi-Yau
manifold~$(V,g^V,I^V,\omega^V,\Omega^V)$ with a compact subset~$K$
such that $V\setminus K\cong\R_+\times S^1_{\lnn}\times\K$,
and such that there exists~$c>0$
such that on~$\R_+\times S^1_{\lnn}\times\K$ as~$t\to\infty$,
\begin{equation}
  \begin{aligned}\label{2.2}
    \omega^V&=dt\wedge d\anglen+\omega^I+d\alpha&&
    \text{for some~$\alpha$ with }\norm\alpha_{C^k}=O\bigl(e^{-c t}\bigr)\;,\\
    \Omega^V&=(d\anglen-i\,dt)\wedge(\omega^J + i \omega^K)+d\beta&&
    \text{for some~$\beta$ with }\norm\beta_{C^k}=O\bigl(e^{-c t}\bigr)\;,\\
    g^V&=\omega^V(\punkt,I^V\punkt)\;,
  \end{aligned}
\end{equation}
for all~$k\ge 0$, for some \hk structure $(\omega^I, \omega^J, \omega^K)$
on $\K$.
Here, $\norm\punkt_{C^k}$ is taken with respect to the background
metric on~$\R_+\times S^1_{\lnn}\times\K$ obtained by putting~$\alpha=\beta=0$,
\ie $dt^2 + du^2 + g^\K$.
We refer to~$S^1_{\lnn}\times\K$ as the cross section at infinity.

Fix~$\lnx>0$
and put~$\widetilde M=V\times S^1_{\lnx}$.
Let~$\anglex$ be the coordinate of the new ``external'' $S^1_{\lnx}$.
By \mbox{\cite[equation~(2.38)]{CHNP}},
the manifold~$\widetilde M$ carries a %
$G_2$-holonomy metric
with associated $3$-form
\begin{equation}\label{2.3}
  \phy=d\anglex\wedge\omega^V+\Re\Omega^V\;.
\end{equation}

In order to accommodate extra-twisted connected sums, %
we put~$M=\widetilde M/\Gamma$,
where~$\Gamma\cong\Z/k$.
We assume that~$\Gamma$ acts freely by rotations
on the external circle~$S^1_{\lnx}$,
preserves the Calabi-Yau structure on~$V$,
and induces a trivial action on the K3 surface~$\Sigma$
and a free action on the interior circle~$S^1_\lnn$.
In particular, the quotient~$M$ is smooth, and has an asymptotically
cylindrical end with cross section at infinity isometric to
\begin{equation}\label{eqn:X}
  X=\bigl((S^1_{\lnn}\times S^1_{\lnx})/\Gamma\bigr)\times\K\;.
\end{equation}
Note that~$(S^1_{\lnn}\times S^1_{\lnx})/\Gamma$ is again a two-torus,
on which~$(\del_\anglen,\del_\anglex)$ still defines an ortho\-normal frame
of tangent vectors.
The forms~$\alpha$ and~$\beta$ above can be chosen $\Gamma$-invariant.
One can construct examples of asymptotically cylindrical Calabi-Yau manifolds
with an action of~$\Gamma\cong\Z/2$ starting from Fano or weak Fano 3-folds of
index 2, as outlined in Examples \ref{ex:cubic_block} and \ref{ex:sextic_block}.
More general examples with~$k\ge 3$ are considered in~\cite{GN}.

\subsection{The Gluing construction}\label{subsec:glue}
The extra-twisted connected sum construction involves the following data.
\begin{itemize}
\item
Two asymptotically cylindrical Calabi-Yau manifolds~$V_+$ and~$V_-$,
with asymptotic cross-sections $S^1_{\lnn_\pm} \times \kd_\pm$
(where $\kd_\pm$ is a K3 surface and $\lnn_\pm$ is the circumference lengths
of the ``internal'' $S^1$ factor), admitting an action of
$\Gamma_\pm = \Z/k_\pm$ by automorphisms as above.
\item An angle~$\thet\in(0,\pi)$, which we will refer to as the \emph{gluing angle}.
\item A \emph{\hk rotation} $\hkr : \kd_+ \to \kd_-$, \ie 
the \hk structures on~$\K_+$ and~$\K_-$ are related by
\begin{equation}
\label{2.4}
\begin{aligned}
\hkr^*\omega^K_- &= - \omega^K_+ \\
\hkr^*(\omega^I_- + i \omega^J_-) &= e^{i\vartheta} (\omega^I_+ - i\omega^J_+) ;
\end{aligned}
\end{equation}
see also~\cite[Def.~3.10]{CHNP} for~$\thet=\frac\pi2$.
\item The lengths of the exterior circles~$\lnx_+, \lnx_- > 0$.
\item An orientation-reversing isometry $T^2_+ \to T^2_-$, where 
$T^2_\pm$ is the torus $(S^1_{\lnn_\pm}\times S^1_{\lnx_\pm})/\Gamma_\pm$,
such that the orthogonal frames are related by
\begin{align}
  \begin{split}\label{2.5}
    \del_{\anglex_-}&=\cos\thet\,\del_{\anglex_+}+\sin\thet\,\del_{\anglen_+}\;,\\
    \del_{\anglen_-}&=\sin\thet\,\del_{\anglex_+}-\cos\thet\,\del_{\anglen_+}\;
  \end{split}
\end{align}
(see Figures~\ref{fig:1/4}, \ref{fig:1/6} in \S \ref{Kap4} for illustrations where $\thet = \frac\pi4$ or
$\frac\pi6$).
\end{itemize}
Given this data, we construct~$(M_{\pm},g^{TM_\pm},\phy_{\pm})$ as above.
Let~$\rho\colon\R\to[0,1]$ be a smooth cutoff function
such that~$\rho(x)=0$ for~$x\le 0$ and~$\rho(x)=1$ for~$x\ge 1$.
Let~$\alpha_\pm$, $\beta_\pm\in\Omega^\bullet(V_\pm)$ be as in~\eqref{2.2}.
Let~$\ell\gg 1$, and put
\begin{align}
  \begin{split}\label{2.6}
    \omega^{V_\pm}_\ell&=\omega^{V_\pm}-d\bigl(\rho(t_\pm{-}\ell)\alpha_\pm\bigr)\;,\\
    \Omega^{V_\pm}_\ell&=\Omega^{V_\pm}-d\bigl(\rho(t_\pm{-}\ell)\beta_\pm\bigr)\;,
  \end{split}
\end{align}
as in~\cite[equations~(3.8)]{CHNP}.
Assuming that~$\alpha_\pm$, $\beta_\pm$ are $\Gamma_\pm$-invariant,
we define closed, but not torsion free,
$G_2$-structures
\begin{equation}\label{2.6a}
  \phy_{\pm,\ell}=dv_\pm\wedge\omega^{V_\pm}_\ell
  +\Re\Omega^{V_\pm}_\ell
\end{equation}
on~$M_{\pm}$ as in equation~\eqref{2.3}.
In particular,
\begin{equation}\label{2.7}
  \phy_{\pm,\ell}|_{(\ell+1,\ell+3)\times\X} %
  =d\anglex_\pm\wedge \omega^I_\pm
	+d\anglen_\pm\wedge \omega^J_\pm 
	+dt_\pm\wedge \omega^K_\pm 
        +dt_\pm\wedge d\anglen_\pm\wedge d\anglex_\pm\;,
\end{equation}
see~\cite[eq.~(3.12)]{CHNP}.

We may identify~$(\ell+1,\ell+3)\times\bigl((S^1_{\zeta_\pm}\times S^1_{\xi_\pm})/\Gamma_\pm\bigr)\times\K_\pm$
with~$(-1,1)\times X$
using the isometries of~$\K_+$ and~$\K_-$
and of~$T^2_+$ and~$T^2_-$ above,
such that~$t_++t_-=2\ell+4$.
Hence,
let~$V_{\pm,\ell}$ denote the
manifold~$V_\pm\setminus((\ell+2,\infty)\times S^1_{\lnn_\pm}\times\K_\pm)$,
and put~$\widetilde M_{\pm,\ell}=V_{\pm,\ell}\times S^1_{\lnx_\pm}$
and~$M_{\pm,\ell}=\widetilde M_\ell/\Gamma_\pm$.
Then~$M_{\pm,\ell}$ is a manifold with boundary~$X$.
We now define
\begin{equation}\label{eqn:Ml}
  M_\ell=M_{-,\ell}\cup_{X}M_{+,\ell}\;.
\end{equation}
It then follows from~\eqref{2.4}--\eqref{2.7} that~$\phy_{+,\ell}$
and~$\phy_{-,\ell}$ extend to a smooth and closed (but not torsion-free)
$G_2$-structure~$\phy_\ell$ on~$M_\ell$.

As coordinate on~$(-\ell-2,\ell+2)\times X$ we choose
\begin{equation}\label{tCoord}
  t \; =\; t_--\ell-2 \; = \; \ell+2-t_+\;.
\end{equation}
Then~$t$ is an inward normal coordinate for~$M_+$.
The compatible orientation on~$X$ is given by combining the usual orientation
of the K3 surface~$\K$ with the orientation of~$T^2$ given by
the two parallel orthonormal frames~$(\del_{\anglen_-},\del_{\anglex_-})$
and~$(\del_{\anglex_+},\del_{\anglen_+})$ of~\eqref{2.5}.

Kovalev~\cite[Theorem~5.34]{Kovalev}
proves that there is a torsion free $G_2$-structure~$\bar\phy_\ell$
in the cohomology class of~$\phy_\ell$ if~$\ell$ is sufficiently large,
in the case that~$\thet=\frac\pi2$ and~$\Gamma_+=\Gamma_-=\{\id\}$ are trivial.
The same argument holds in the more general case.

\begin{thm}\label{Thm2.1}
  For~$\ell$ sufficiently large,
  there exists a torsion free $G_2$-structure~$\bar\phy_\ell$
  in the cohomology class of~$\phy_\ell$
  such that for each~$k_0$,
  there exists a constant~$c$ such that for all~$k\le k_0$,
  \begin{equation*}
    \norm{\phy_\ell-\bar\phy_\ell}_{C^k}\le e^{-c\ell}
  \end{equation*}
  with respect to the Riemannian metric associated to~$\phy_\ell$.
\end{thm}

\begin{proof}
  Let~$\bar\phy_\ell$ denote the torsion-free $G_2$-structure
  in the cohomology class of~$\phy_\ell$,
  which exists by~\cite[Theorem~5.34]{Kovalev}.
  Now, the theorem follows by bootstrapping
  using~\cite[Proposition~5.32]{Kovalev}
  with~$\bar\Theta=\Theta(\phy_\ell)-\Theta(\bar\phy_\ell)$,
  see also~\cite[p. 303]{Joyce}.
\end{proof}

\subsection{Matching and configurations}

Theorem \ref{Thm2.1} raises the question of how to find examples of the data
needed to apply it---we call this the \emph{matching problem}.
A further question is how to compute topological
properties of the resulting 7-manifolds. The notion of a configuration
of polarising lattices of ACyl Calabi-Yau 3-folds turns out to be crucial
to both questions.

\begin{dfn}\label{NpmDef}
For an ACyl Calabi-Yau 3-fold $V$ with asymptotic cross-section
$S^1 \times \kd$, call the image $N$ of the restriction map
$H^2(V;\Z) \to H^2(\kd;\Z)$ equipped with the restriction of the intersection
form of $\kd$ the \emph{polarising lattice} of $V$.
\end{dfn}

If $V$ has full holonomy $SU(3)$ then $N \subset H^{1,1}(\kd)$, so that
$\kd$ is an ``$N$-polarised'' K3 surface. 
Since the polarising lattice contains a Kähler form and is also orthogonal
to the real and imaginary parts of a holomorphic 2-form, it must be
non-degenerate of signature $(1, \rk N - 1)$.

Up to isometry, there exists a unique even non-singular lattice $\latc$ of
signature $(3,19)$, so $H^2(\kd;\Z)$ is isometric to $\latc$ for any
K3 surface $\kd$. Thus we can consider the polarising lattice of an ACyl
Calabi-Yau 3-fold as a sublattice of $\latc$, well-defined up to the action
of $O(\latc)$.
Given a pair of ACyl Calabi-Yau 3-folds $V_\pm$ and a \hk rotation
$\hkr : \kd_+ \to \kd_-$, we can 
instead consider the \emph{pair} of sublattices $N_+, N_- \subset \latc$,
and thus associate to $\hkr$ a well-defined configuration in the following
sense.

\begin{dfn}\label{ConfDef}
Given a pair of lattices $N_+, N_-$, a \emph{configuration} is a pair of
embeddings of $N_+$ and $N_-$ into the K3 lattice $\latc$, where two pairs are
considered equivalent if they are related by the action of $O(\latc)$.
\end{dfn}

Much of the topology of a twisted connected sum can be computed from data
about the ACyl Calabi-Yaus $V_\pm$ individually together with the configuration,
\eg the cohomology is easily computed using Mayer-Vietoris.
The following property of the configuration also affects the value of
$\bar \nu$.

\begin{dfn}\label{def:angles}
Given a configuration $N_+, N_- \subset \latc$, let $A_\pm : \latc_\R \to \latc_\R$
denote the reflection of $\latc_\R := \latc \otimes \R$ in $N_\pm$ (with respect to the
intersection form of $\latc_\R$; this is well-defined since $N_\pm$ is non-degenerate).
Suppose that
\begin{equation}
\label{eq:preserve}
\begin{aligned}
&A_+ \circ A_- \textrm{ preserves some decomposition }\latc_\R = \latc^+ \oplus \latc^-\\[-1mm]
&\textrm{ as a sum of positive and negative-definite subspaces.}
\end{aligned}
\end{equation}
Then the \emph{configuration angles} are the arguments (in $(-\pi,\pi]$)
$\alpha^+_1, \alpha^+_2, \alpha^+_3$ and
$\alpha^-_1, \ldots, \alpha^-_{19}$ of the eigenvalues of the restrictions
$A_+ \circ A_- : \latc^+ \to \latc^+$ and $A_+ \circ A_- : \latc^- \to \latc^-$ respectively.
\end{dfn}

From the gluing angle and the configuration angles,
we can define the terms on the right hand side of Corollary~\ref{MainThm},
which completely determine the extended $\nu$-invariant of an extra-twisted
connected sum with $k_\pm \leq 2$.

\begin{dfn}\label{def2.6}
  Let~$\thet$ be the gluing angle,
  let~$\rho=\pi-2\thet$, and let %
  $\alpha_1^-$, \dots, $\alpha_{19}^-\in(-\pi,\pi]$ be the configuration
  angles.
  Then put
  \begin{equation*}
    m_\rho(\latc;N_+,N_-)
    =\Bigl(\#\bigl\{\,j
		\bigm|\alpha_j^-\in\{\pi-\abs\rho,\pi\}\,\bigr\}-1
        +2\,\#\bigl\{\,j
		\bigm|\alpha_j^-\in(\pi-\abs\rho,\pi)\,\bigr\}\Bigr)\, \Sign\rho \;.
  \end{equation*}
  If $\rho = 0$ then set $m_0 = 0$.
\end{dfn}

Given a pair of ACyl Calabi-Yau 3-folds $V_\pm$, there is in general no reason
to expect there to exist any \hk rotation between their asymptotic K3s.
On the other hand, if we want to understand the topology of the resulting
\gtmfd s, we don't need to control the actual Calabi-Yau structures, but only
the topology of the underlying ACyl Calabi-Yau manifold
(along with the configuration of the \hk rotation).
It is therefore fruitful to phrase the problem as follows. 

\begin{matchingproblem}
\label{match}
Given $\thet$, a pair of deformation families of ACyl Calabi-Yau 3-folds
and a configuration of their polarising lattices $N_\pm$, does there exist some
pair of members with a $\thet$-\hk rotation compatible with that configuration?
\end{matchingproblem}

For a positive answer, there are various necessary conditions on the
configuration. Most relevant for us is that the
condition \eqref{eq:preserve} in Definition \ref{def:angles} must be satisfied.
This is because the metric of the \hk structure
(which is preserved by the \hk rotation) defines a splitting of $\latc_\R$
into its self-dual and anti-self-dual parts, and $N_\pm\otimes\R$
splits as a sum of the span
of~$[\omega^I_\pm]$ (which is self-dual) and the anti-self-dual part
of $N_\pm\otimes\R$. Thus $A_{\pm}$ each preserve the splitting of $\latc_\R$,
so certainly $A_+ \circ A_-$ does too.
Moreover, \eqref{2.4} implies that
\begin{equation}
\label{eq2:+rotation}
\{ \alpha^+_1, \alpha^+_2, \alpha^+_3\} = \{0, 2\thet, -2\thet\} .
\end{equation}
Strategies for producing sufficient conditions for solving the matching
problem are discussed in~\cite[Section~6]{CHNP} and \cite[Section 5]{CrN3}
for the case of rectangular twisted connected sums
(\ie~${\thet=\frac\pi2}$, $\Gamma_-=\Gamma_+$ trivial),
and in~\cite[Section 6]{xtcs} for extra-twisted connected sums. We describe some of the
resulting examples below.

\begin{rmk}
\label{rmk:identify_angles}
Here is a way to identify the non-zero angles in
Definition \ref{def:angles} that is often convenient in examples.
Let $\pi_\pm : \latc \to N_\pm$ be the %
orthogonal projection to the non-degenerate sub\-lattice~$N_\pm$.
Then the restriction of $\pi_\pm \circ \pi_\mp$ to $N_\pm$ is self-adjoint.
Elements of the kernel of these operators are $\pi$-eigenvectors
of $A_+ \circ A_-$.
On the other hand, if $\phi \in (0,\pi)$ then $v \in N_+$ is a
$\big(\!\cos \frac\phi2 \big)^2$-eigenvector of $\pi_+\pi_-$ if and only if
$\pi_- v$ is a $\big(\!\cos \frac\phi2 \big)^2$-eigenvector of~$\pi_-\pi_+$

In summary, each of the 22 configuration angles that equals $\pi$ contributes
an eigenvalue 0 to one of $\pi_+\pi_-$ and $\pi_-\pi_+$, while each angle pair
$\phi, -\phi$ with $\phi \in (0,\pi)$ contributes an eigenvalue
$\big(\!\cos \frac\phi2 \big)^2$ to each of $\pi_+\pi_-$ and~$\pi_-\pi_+$.
\end{rmk} 

\section{Examples}\label{Kap4}

We outline some examples of extra-twisted connected sums described in detail
in~\cite[\S 8]{xtcs}, and compute their $\bar\nu$-invariants. The proofs of
Theorems \ref{Thm4.1} and \ref{Thm4.2} rely on Example \ref{ex:pi4rk2}
and~\ref{ex:pi6rk2}, respectively.
For each example we will indicate the pair of asymptotically cylindrical
Calabi-Yau manifolds used, describe the data of the matching required to
compute the configuration angles and then apply Corollary \ref{MainThm}.

In this section, we will use~$H^\bullet(\punkt)$ to refer to cohomology with
\emph{integer} coefficients.

To find diffeomorphisms between different (extra-) twisted connected sums
we use the following special case of the results of Wilkens
\cite[Theorem~2]{wilk1} and \cite[Theorem~1]{wilk2}
(\cf \mbox{\cite[\mbox{Theorems~4.22 and~4.25}]{CHNP}} and
\cite[Theorem~1.3]{CrN2}).

\begin{thm}
\label{thm:torfreeclass}
Smooth closed 2-connected 7-manifolds~$M$ with~$H^4(M)$ torsion-free are
classified up to almost-diffeomorphism by the isomorphism class of the pair
$(H^4(M), p_1(M))$,
or equivalently by~$b_3(M)$ and~$\div p_1(M)$,
which we define as the greatest integer dividing~$p_1(M)$.
Moreover, if~$\div p_1(M)$ is not divisible by 16 or 7, then the pair~$(b_3(M), \div p_1(M))$
determines~$M$ up to diffeomorphism.
\end{thm}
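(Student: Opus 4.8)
The plan is to deduce the theorem from Wilkens' classification of $2$-connected $7$-manifolds, after first pinning down the homotopy-theoretic shape of the manifolds involved. First I would note that if $H^4(M;\Z)$ is torsion-free then the universal coefficient theorem together with Poincaré duality forces $H_3(M)$, $H_4(M)$, $H^3(M)$ and $H^4(M)$ to be free abelian of one common rank $b_3(M)=b_4(M)$, with all other reduced (co)homology vanishing; in particular $H^2(M;\Z/2)=0$, so $M$ is automatically spin. Such an $M$ admits a handle decomposition with handles only in indices $0$, $3$, $4$ and $7$, and is therefore of the type classified by Wilkens \cite{wilk1, wilk2}. When $H^4(M;\Z)$ is torsion-free the torsion linking form is absent, and Wilkens' complete invariant for the classification up to almost-diffeomorphism --- that is, up to connected sum with a homotopy $7$-sphere --- is the isomorphism type of the pair $\bigl(H^4(M;\Z),p_1(M)\bigr)$ consisting of the free abelian group $H^4(M;\Z)$ together with the first Pontryagin class. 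This is the content of \cite[Theorem~2]{wilk1}, \cite[Theorem~1]{wilk2}; see also \cite[Theorems~4.22 and~4.25]{CHNP} and \cite[Theorem~1.3]{CrN2}.

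Next I would check the elementary reformulation in terms of $b_3(M)$ and $\div p_1(M)$. The group $H^4(M;\Z)$ carries no intrinsic bilinear form --- the only Poincaré pairing available in this dimension, $H^3(M)\times H^4(M)\to\Z$, merely identifies $H^3(M)$ with $\operatorname{Hom}(H^4(M),\Z)$ --- so the isomorphism type of the pair amounts to that of $(\Z^{r},x)$ with $r=b_4(M)$ and $x=p_1(M)$ under the action of $GL(r,\Z)$. By Smith normal form any $x\in\Z^{r}$ is carried by $GL(r,\Z)$ to $d\,e_1$, where $d$ is the greatest common divisor of the coordinates of $x$; hence the isomorphism type of $\bigl(H^4(M;\Z),p_1(M)\bigr)$ is determined by, and determines, the pair $\bigl(b_3(M),\div p_1(M)\bigr)$, using $b_3(M)=b_4(M)$.

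Finally, for the refinement to honest diffeomorphism: if $M_1$ and $M_2$ carry the same invariants then, by the above, $M_2\cong M_1\#\Sigma$ for some $\Sigma$ in the group $\Theta_7\cong bP_8\cong\Z/28$ of homotopy $7$-spheres, and $M_1\cong M_2$ precisely when $\Sigma$ lies in the inertia group $I(M_1)\subseteq\Theta_7$. I would invoke the computation of $I(M_1)$ via the Eells--Kuiper-type smooth invariant (Kreck--Stolz \cite{KS3}, Crowley--Nordstr\"om \cite{CrN2}): this secondary invariant becomes a well-defined element of $\Q/\Z$ only after choosing a spin coboundary $W^8$ of $M$ and a lift of $p_1(M)$ to $W$, and the indeterminacy produced by these choices is a subgroup of $\Q/\Z$ whose size is governed by $\div p_1(M)$. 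Writing $\Theta_7\cong\Z/4\oplus\Z/7$ and analysing the $2$- and $7$-primary parts separately, one finds that this indeterminacy already contains the full image $\tfrac1{28}\Z/\Z$ of $\Theta_7$ under the Eells--Kuiper invariant as soon as $16\nmid\div p_1(M)$ and $7\nmid\div p_1(M)$; then no smooth invariant can separate $M_1$ from $M_1\#\Sigma$, and combined with Wilkens' classification this forces $I(M_1)=\Theta_7$, hence a single diffeomorphism class in each almost-diffeomorphism class. The main work in a self-contained treatment is exactly this last step --- quantifying how the divisibility of $p_1$ bounds the denominator of the Eells--Kuiper invariant, and thereby the inertia group --- but for our purposes it is supplied by \cite{CrN2}.
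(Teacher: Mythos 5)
Your proposal is correct and takes essentially the same route as the paper, which presents this theorem purely as a citation of Wilkens \cite{wilk1, wilk2} together with \cite[Theorems~4.22 and~4.25]{CHNP} and \cite[Theorem~1.3]{CrN2}: you supply the (easy) Smith-normal-form reduction to $(b_3(M),\div p_1(M))$ and a sketch, consistent with those sources, of why the divisibility hypothesis $16,7\nmid\div p_1(M)$ makes the Eells--Kuiper indeterminacy absorb the image of $\Theta_7$. The only phrasing worth tightening is that ``no smooth invariant can separate $M_1$ from $M_1\#\Sigma$'' does not by itself force $I(M_1)=\Theta_7$; what does is the \emph{completeness} of the invariant set in \cite{CrN2}, which you in any case invoke.
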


Here \emph{almost-diffeomorphism} means
a homeomorphism that is smooth away from a single point (so two smooth
manifolds are almost-diffeomorphic if one is diffeomorphic to the connected
sum of the other with an exotic sphere).

The problems of counting diffeomorphism classes in a given
almost-diffeomorphism class of smooth 7-manifolds and counting classes of
\gtstr s on a given smooth 7-manifold are closely related.
In the proof of Theorem \ref{Thm4.2} we will make use of the following
special case of~\mbox{\cite[Corollary 1.13]{CrN}}.

\begin{prop}
\label{prop:nu_classifies}
Let $M$ be a smooth closed 2-connected 7-manifold with $H^4(M)$ torsion-free.
If $\div p_1(M)$ divides 224 then there are precisely 24 classes of \gtstr s
on $M$ modulo homotopy and diffeomorphism, and they are distinguished by $\nu$.
\end{prop}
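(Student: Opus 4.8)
The plan is to obtain this as the special case of~\cite[Corollary~1.13]{CrN} in which the ``secondary'', $H^4$-valued part of the classification of \gtstr s degenerates. Recall from~\cite{CrN} that for a $2$-connected (hence spin) closed $7$-manifold~$M$ with $H^4(M)$ torsion-free the set of \gtstr s modulo homotopy is a torsor under a group assembled from $\Z/48$ --- the part detected by~$\nu$ --- together with a piece built from $H^4(M;\Z)$, so that a \gtstr{} is pinned down up to homotopy by~$\nu$ and an auxiliary class in a quotient of $H^4(M;\Z)$, the two being tied by an explicit compatibility relation. Passing further to the quotient by diffeomorphisms, one divides by the action of those automorphisms of $\bigl(H^4(M),p_1(M)\bigr)$ that are realised by diffeomorphisms; by Wilkens' classification (Theorem~\ref{thm:torfreeclass}) these are essentially all such automorphisms. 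The arithmetic hypothesis that $\div p_1(M)$ divides $224=2^5\cdot 7$ is precisely what makes this action transitive on each level set of~$\nu$, so that the auxiliary $H^4$-invariant becomes redundant and~$\nu$ is left as a complete invariant. This transitivity is the heart of the matter and is genuinely the content of~\cite{CrN}; the task in the present paper is therefore only to check that ``$\div p_1(M)\mid 224$'' implies the hypotheses of~\cite[Corollary~1.13]{CrN} and to read off its conclusion.

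It remains to see that the number of classes is exactly~$24$, and this follows from the only congruence that~$\nu$ satisfies. From $\nu(\varphi)=\chi(W)-3\sigma(W)-2n(\bar s)$ (Definition~\ref{def:nu}) one has $\nu(\varphi)\equiv\chi(W)+\sigma(W)\pmod 2$. Comparing two spin coboundaries $W_1,W_2$ of~$M$ via the closed spin $8$-manifold $X=W_1\cup_M(-W_2)$, and using Novikov additivity of the signature together with $\chi(M)=0$, the two values of~$\nu$ differ by $\chi(X)+\sigma(X)$ modulo~$2$; but $\chi(X)+\sigma(X)$ is even for every closed oriented $8$-manifold, since Poincar\'e duality gives $\chi(X)\equiv b_4(X)\pmod 2$ while $\sigma(X)=b_4^+(X)-b_4^-(X)\equiv b_4(X)\pmod 2$ as well. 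Hence $\nu(\varphi)$ has a fixed parity as~$\varphi$ varies, so~$\nu$ takes at most~$24$ values and there are at most~$24$ classes; combined with the surjectivity onto the admissible values given by~\cite[Corollary~1.13]{CrN}, there are exactly~$24$, all distinguished by~$\nu$.

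The main obstacle is thus the classification input of the first paragraph: that the $H^4$-valued refinement of~$\nu$ is annihilated by diffeomorphisms once $\div p_1(M)\mid 224$ cannot be seen from $\eta$-invariants, and is not reproved here. The argument in this paper consists of matching the hypothesis of the proposition to that of~\cite[Corollary~1.13]{CrN} and quoting it, with the parity observation above supplying the self-contained reason why the count is~$24$ and not~$48$.
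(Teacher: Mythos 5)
Your proposal matches the paper's treatment: the paper offers no independent argument here, introducing the proposition verbatim as ``the following special case of \cite[Corollary 1.13]{CrN}'', which is exactly the reduction you make, and you correctly identify that the transitivity/completeness statement under the hypothesis $\div p_1(M)\mid 224$ is the content of that corollary and cannot be re-derived from anything in the present paper. Your supplementary parity computation ($\chi(X)+\sigma(X)\equiv 2b_4(X)\equiv 0\bmod 2$ for closed oriented $8$-manifolds, so $\nu$ has fixed parity on a given $M$ and takes at most $24$ values) is correct and consistent with why the count is $24$ rather than $48$.
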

\subsection{Asymptotically cylindrical Calabi-Yaus with involution}

Let us first describe two families of examples of asymptotically cylindrical
Calabi-Yau manifolds with involution, constructed from Fano 3-manifolds~$X$
with index 2. That means $X$ is a closed complex 3-fold whose anticanonical
class~$-K_X$ is ample and even.

\begin{ex}[{\cite[Example 3.25(iii)]{xtcs}}]
\label{ex:cubic_block} 
Let~$X \subset \PP^4$ be a smooth cubic.
Let~$\K \subset X$ be a smooth section by a quadric, and let~$C \subset \K$ be
a section by a hyperplane. Let~$Y$ be the double cover of~$X$ branched over
$\K$, and~$Z$ the blow-up of~$Y$ in the curve~$C$. Then~$Z$ contains an
anticanonical divisor isomorphic to~$\K$, with trivial normal bundle, and
$V := Z \setminus \K$ admits asymptotically cylindrical Calabi-Yau metrics.
The branch-switching involution of~$Y$ generates an automorphism group
$\Gamma \cong \Z/2$
of~$V$ of the kind described in Subsection~\ref{subsec:acyl}.

The polarising lattice $N$ of~$V$ is the same as the Picard lattice of~$Y$,
\ie~$\Pic Y \cong H^2(Y)$ equipped with the bilinear form
$(D_1, D_2) \mapsto D_1 \cdot D_2 \cdot (-K_Y)$.
It is $N \cong (6)$, that is, it has a single generator with
square 6.
\end{ex}

\begin{ex}[{\cite[Example 3.25(i)]{xtcs}}]
\label{ex:sextic_block}
In the weighted projective space~$\PP^4(3, 2, 1, 1, 1)$,
consider a smooth sextic hyper\-surface~$X$,
such that the anti\-canonical section
$\K := \{X_1 = 0\}$ is smooth (where~$X_1$ is the weight 2 coordinate).
Let~$Y$ be the double cover of~$X$ branched over~$\K$. ($Y$~is a sextic
hypersurface in~$\PP^4(3, 1, 1, 1, 1)$; it is a double cover of~$\PP^3$
branched over a sextic surface.)

Let~$C \subset \K$ be the intersection with a hyperplane (of weight 1, like
$\{X_2 = 0\}$), and let $Z$ be the blow-up of~$Y$ in~$C$.
Then~$V := Z \setminus \K$
is an asymptotically cylindrical Calabi-Yau manifold with involution as above.
The Picard lattice of~$Y$ is~$N \cong (2)$.
\end{ex}

The examples we use to prove Theorems \ref{Thm4.1} and \ref{Thm4.2} both
rely on the following family of Calabi-Yau manifolds with involution,
whose topological properties turn out to be auspicious for
constructing extra-twisted connected sums without torsion in~$H^*(\punkt)$.

\begin{ex}[{\cite[Example 3.31]{xtcs}}] 
\label{ex:auspicious}
Let~$X'$ be a smooth sextic hypersurface 
$\PP^4(3, 2, 1, 1, 1)$ such that~$X'$ is tangent to~$\{X_1 = 0\}$ at
$p := (0 {:} 0 {:} 0 {:} 0 {:} 1)$.
Let~$X$ be the blow-up of~$X'$ at~$p$, and~$\K \subset X$ the proper transform
of the section~$\K' := \{X_1 = 0\} \cap X'$.
Generically~$p$ is an ordinary double point on~$\K'$, and~$\K$ is a smooth
section of~$-K_X$. Let~$Y$ be the double cover of~$X$ branched over~$\K$.
$Y$ has Picard lattice
$N \cong \bigl(\begin{smallmatrix}2&2\\ 2& 0 \end{smallmatrix}\bigr)$.

Blowing up~$Y$ in a curve and removing an anticanonical divisor isomorphic
to~$\K$ yields an asymptotically
cylindrical Calabi-Yau manifold with involution, and polarising lattice~$N$.
\end{ex}

\thefigures

\subsection{Extra-twisted connected sums with
\texorpdfstring{$\thet = \frac{\pi}{4}$}{angle pi/4}}

We now consider examples of extra-twisted connected sums with gluing angle
$\frac\pi4$. We use a pair of asymptotically cylindrical Calabi-Yau 3-folds
$V_+$ and $V_-$ with asymptotic cross-sections $S^1_{\lnn_\pm} \times \K_\pm$,
and require $V_+$ to have an automorphism group $\Gamma_+ \cong \Z/2$
like in Subsection~\ref{subsec:acyl}.
If we set $\lnx_+ = \lnn_+$,
then the torus factor in the boundary
of~$M_+ := (S^1_{\lnx_+}\times V_+)/\Gamma_+$ is a square torus, of side
length $\lnn_+/\sqrt 2$. If $\lnn_- = \lnx_- = \lnn_+/\sqrt 2$,
then there exists an isometry
$(S^1_{\lnx_+} \times S^1_{\lnn_+})/(\ant \times \ant)
\to S^1_{\lnx_-} \times S^1_{\lnn_-}$
with gluing angle $\thet = \frac\pi4$, illustrated in Figure \ref{fig:1/4}.

If we in addition have a \hk rotation $\kd_+ \to \kd_-$ with angle $\frac\pi4$
(in the sense of~\eqref{2.4}) then we have all the data required to
construct an extra-twisted connected sum of
$M_+$ and $M_- := S^1_{\lnx_-} \times V_-$.

\begin{ex}
\label{ex:pi4rk1}
Take $V_+$ to be an asymptotically cylindrical Calabi-Yau manifold
with involution from Example \ref{ex:sextic_block}, and $V_-$ an asymptotically
cylindrical Calabi-Yau manifold constructed from a blow-up of $\CP^3$
\cite[Row 1 of Table 1]{CHNP}. The polarising lattices are $N_+ \cong (2)$
and $N_- \cong (4)$. There is a solution to the matching problem \ref{match}
with gluing angle~$\thet = \frac\pi4$ and
configuration such that the intersection form on~$N_+ \oplus N_-$ is
\[ \begin{pmatrix} 2 & 2 \\2 & 4\end{pmatrix} . \]
Using Remark \ref{rmk:identify_angles}, it is easy to see that
the isometry~$A_{+}A_{-}$ of~$H^2(\K)$
rotates the two-dimensional span of $N_+$ and $N_-$ in $H^{2,+}(\K)$ by
$\frac \pi 2$ (this is the instance in this example of the two-dimensional
subspace rotated by $2\thet$ that always exists in $H^{2,+}(\K)$, see
\eqref{eq2:+rotation}),
and it fixes the orthogonal complement pointwise.
In particular, the configuration angles~$\alpha^\pm_i$ are given by
\begin{equation}
  \alpha^+_1=\tfrac\pi2\;,\qquad
  \alpha^+_2=-\tfrac\pi2\;,\qquad\text{and}\qquad
  \alpha^+_3=\alpha^-_1=\cdots=\alpha^-_{19}=0\;.
\end{equation}
We have~$\rho=\pi-2\thet=\tfrac\pi2$.
By Corollary~\ref{MainThm} and Definition \ref{def2.6}, we conclude that
\begin{equation*}
  \bar\nu(M,  g)=-39\;.
\end{equation*}
The resulting $G_2$-manifold is 2-connected with~$H^4(M) \cong \Z^{134}$
and~$\div p_1(M)=48$, see line 21 of \cite[Table 4]{xtcs}.
\end{ex}

The construction is also possible starting with~$\thet=\frac{3\pi}4$.
This would give~$\bar\nu$-invariant~$39$. The resulting manifold is
orientation-reversing isometric to a twisted connected sum of complex
conjugates of the two ACyl Calabi-Yau manifolds with gluing angle
$\frac{\pi}{4}$, see \cite[Remark 1.12]{xtcs}.

Example \ref{ex:pi4rk1} is not diffeomorphic to any 2-connected rectangular
twisted connected sum, since those always have odd~$b_3$
\cite[Theorem~4.8(iii)]{CHNP}.

\begin{ex}
\label{ex:pi4rk2}
Take~$V_+$ to be an asymptotically cylindrical Calabi-Yau manifold with
involution from Example \ref{ex:auspicious}.
The intersection form on the polarising lattice~$N_+$ is thus
given by~$\bigl(\begin{smallmatrix}2&2\\
2&0\end{smallmatrix}\bigr)$.

Let~$V_-$ be constructed from Entry 3 in the Mori-Mukai list of Fano 3-folds
of rank 2, see~\mbox{\cite[Table 3]{CrN3}}.
Then the intersection form on the polarising lattice~$N_-$
is described by~$\bigl(\begin{smallmatrix}4&2\\2&0\end{smallmatrix}\bigr)$.

The manifolds~$M_+$ and~$M_-$ can be glued with angle~$\thet=\frac\pi 4$
and a configuration such that~$N_+\cap N_-=0$, and
the intersection form on~$N_+\oplus N_-$ is given by
\begin{equation*}
  \begin{pmatrix}
    2&2&2&1\\2&0&2&0\\2&2&4&2\\1&0&2&0
  \end{pmatrix}\;,
\end{equation*}
where the first two coordinates correspond to a basis of~$N_+$,
and the last two to a basis of~$N_-$.
One can use Remark \ref{rmk:identify_angles} to check that the
isometry~$A_{+}A_{-}$ of~$H^2(\K)$ acts both on~$H^{2,+}(\K)$
and on~$H^{2,-}(\K)$ by rotating a two-dimensional plane by~$\frac\pi 2$
and fixing its respective orthogonal complement pointwise.
In particular, the configuration angles~$\alpha^\pm_i$ are given by
\begin{equation}
  \alpha^+_1=\alpha^-_1=\tfrac\pi2\;,\qquad
  \alpha^+_2=\alpha^-_2=-\tfrac\pi2\;,\qquad\text{and}\qquad
  \alpha^+_3=\alpha^-_3=\cdots=\alpha^-_{19}=0\;.
\end{equation}
By Corollary~\ref{MainThm}, we conclude that
\begin{equation*}
  \bar\nu(M, g)=-36\;.
\end{equation*}

It is computed in \cite[Example 8.1]{xtcs} that %
the $G_2$-manifold~$M$ is 2-connected, $H^4(M)\cong\Z^{97}$
and~$\div p_1(M)=4$.
\end{ex}

\begin{proof}[Proof of Theorem \ref{Thm4.1}]
According to~\cite[Row $b=74$ in Table 3]{CHNP},
there are rectangular twisted connected sums with the same topological
invariants, %
but with~$\bar\nu=0$ by Corollary~\ref{MainCor}.
By Theorem~\ref{thm:torfreeclass}, these manifolds are diffeomorphic
to the manifold from Example \ref{ex:pi4rk2}.
On the other hand, their $\bar\nu$-invariants are different mod~48, so
we have proved Theorem~\ref{Thm4.1}.
\end{proof}

In the above examples, all the angles $\alpha^\pm_j$ are $2\thet$, $-2\thet$
or 0.
The matching problem \ref{match} is easier to solve for such arrangements of
the polarising lattices, but it is not necessary to work exclusively with these
angles.

\begin{ex}
\label{ex:pi4mixed}
Let $V_+$ be an asymptotically cylindrical Calabi-Yau manifold with involution from
Example~\ref{ex:sextic_block}, which has polarising lattice $N_+ \cong (2)$.
Let $V_-$ be an asymptotically cylindrical Calabi-Yau manifold constructed from
entry 10 in the Mori-Mukai list of rank 2 Fano 3-folds,
\cf \mbox{\cite[Table 3]{CrN3}}. Its polarising lattice is
$N_- \cong \sm{0 & 4 \\ 4 & 8}$.
We can form a matching with angle $\thet = \frac\pi4$ where the
intersection form on $N_+ \oplus N_-$ is given by
\[ \begin{pmatrix} 2 & 1 & 3 \\ 1 & 0 & 4 \\ 3 & 4 & 8 \end{pmatrix} . \]
The action of $A_{+}A_{-}$ on $H^{2,+}(\K)$ is a rotation of a plane
by angle $\frac\pi2$ as usual, while on $H^{2,-}(\K)$ it is a reflection in a
hyperplane (the orthogonal complement of $N_-$ in $H^{2,-}(\K)$).
Thus
\begin{equation}
  \alpha^+_1=\tfrac\pi2\;,\qquad
  \alpha^+_2=-\tfrac\pi2\;,\qquad
  \alpha^-_1 = \pi\;,\qquad\text{and}\qquad
  \alpha^+_3=\alpha^-_2=\cdots=\alpha^-_{19}=0\;,
\end{equation}
and Corollary~\ref{MainThm} implies
\begin{equation*}
  \bar\nu(M, g)=-36\;.
\end{equation*}
It is computed in \cite[Example~8.7]{xtcs} that the $G_2$-manifold $M$ is
2-connected with $H^4(M) \cong \Z^{91}$ and $\div p_1(M) = 8$ 
\end{ex}

\begin{rmk}
\label{rmk:pi4mixed}
According to \cite[Table 4]{CrN3}, there are at least two rectangular twisted
connected sums with the same invariants, so this gives another example of
the same kind as Theorem~\ref{Thm4.1}.
\end{rmk}

\subsection{Extra-twisted connected sums with 
\texorpdfstring{$\thet = \frac{\pi}{6}$}{angle pi/6}}

For extra-twisted connected sums with $\thet = \frac\pi6$ we need a pair
of asymptotically cylindrical Calabi-Yau manifolds $V_+, V_-$ that both
have involutions.

We choose $\lnx_+ = \sqrt{3}\,\lnn_+$, so that the torus factor
$(S^1_{\lnx_+} \times S^1_{\lnn_+})/(-\id_{S^1}, -\id_{S^1})$ in the
asymptotic cross-section of
$M_+ :=  {(S^1_{\lnx_+} \times V_+)}/\{\id, \tau_+\}$ has hexagonal symmetry.
If we also have $\lnn_- = \sqrt{3}\, \lnx_- = \sqrt{3}\,\lnn_+$ then the
torus factor in the asymptotic cross-section of
$M_- :=  {(S^1_{\lnx_-} \times V_-)}/\{\id, \tau_-\}$ is isometric to that
in $M_+$, with $\thet = \frac \pi6$ in \eqref{2.5}. This is illustrated
in Figure~\ref{fig:1/6}.
If there is a \hk rotation between the K3 factors in the asymptotic
cross-sections in the sense of \eqref{2.4} then we can form an extra-twisted
connected sum $M$.

\begin{rmk}
In a similar way, one can produce simply-connected extra-twisted connected
sums with $\thet = \frac\pi3$ by setting $\lnn_- = \lnn_+$,
$\lnx_+ = \lnx_- = \sqrt3 \, \lnn_+$. However, the resulting manifolds tend
to have 3-torsion in $H^4$, making them less convenient for proving results
like Theorem~\ref{Thm4.1}.
\end{rmk}

\begin{ex}
\label{ex:pi6rk1}
Take $V_+$ and~$V_-$ to be asymptotically cylindrical Calabi-Yau manifolds
with involution from
Examples \ref{ex:sextic_block} and \ref{ex:cubic_block} respectively.
The polarising lattices are~$N_+ \cong (2)$ and~$N_- \cong (6)$.
We can find a $\thet = \frac\pi6$ matching with a configuration
where the intersection form on~$N_+ \oplus N_-$ is
\[ \begin{pmatrix} 2 & 3 \\3 & 6\end{pmatrix} . \]
The isometry~$A_{+}A_{-}$ of~$H^2(\K)$
rotates a two-dimensional plane in $H^{2,+}(\K)$ by~$2\thet=\frac{\pi}3$
in the usual way and fixes the orthogonal complement pointwise.
In particular, the configuration angles~$\alpha^\pm_i$ are given by
\begin{equation}
  \alpha^+_1=\tfrac{\pi}3\;,\qquad
  \alpha^+_2=-\tfrac{\pi}3\;,\qquad\text{and}\qquad
  \alpha^+_3=\alpha^-_1=\cdots=\alpha^-_{19}=0\;.
\end{equation}
We have~$\rho=\pi-2\thet=\tfrac{2\pi}3$.
By Corollary~\ref{MainThm}, we conclude that
\begin{equation*}
  \bar\nu(M,  g)=-51\;.
\end{equation*}
It is computed in \cite[Example 8.16]{xtcs} that the \gtmfd{} $M$ 
is 2-connected, $H^4(M) \cong \Z^{86}$ and~$\div p_1(M)=12$.
\end{ex}

Like Example \ref{ex:pi4rk1}, Example \ref{ex:pi6rk1} has even $b_3$ and is
therefore not diffeomorphic to any rectangular twisted connected sum.

\begin{ex}
\label{ex:pi6rk2}
Take both $V_+$ and $V_-$ from the family of asymptotically cylindrical
Calabi-Yau manifolds with involution in Example \ref{ex:auspicious},
so the polarising lattices are
given by~$N_\pm \cong \bigl(\begin{smallmatrix}2&2\\
2&0\end{smallmatrix}\bigr)$.

We can find a $\thet = \frac\pi6$ matching where~$N_+\cap N_-=0$,
and the intersection form on~$N_+\oplus N_-$ is given by
\begin{equation*}
  \begin{pmatrix}
    2&2&2&1\\2&0&1&2\\
    2&1&2&2\\1&2&2&0
  \end{pmatrix}\;.
\end{equation*}
Using Remark \ref{rmk:identify_angles}, we see that the
the isometry~$A_{+}A_{-}$ rotates one two-dimensional plane in each
of~$H^{2,+}(\K)$ and~$H^{2,-}(\K)$ by~$\frac{\pi}3$.
The configuration angles~$\alpha^\pm_i$ are thus 
\begin{equation}
  \alpha^+_1=\alpha^-_1=\tfrac{\pi}3\;,\qquad
  \alpha^+_2=\alpha^-_2=-\tfrac{\pi}3\;,\qquad\text{and}\qquad
  \alpha^+_3=\alpha^-_3=\cdots=\alpha^-_{19}=0\;.
\end{equation}
By Corollary~\ref{MainThm}, we conclude that
\begin{equation}
  \bar\nu(M, g)=-48\;.
\end{equation}
Again, reversing the orientation gives a diffeomorphic $G_2$-manifold
with $\bar\nu$-invariant~$48$.

It is calculated in~\cite[Example 8.17]{xtcs} that the $G_2$-manifold~$M$ is
$2$-connected, ${H^4(M) \cong\Z^{109}}$, and~$\div p_1(M)=4$.
\end{ex}

\begin{proof}[Proof of Theorem \ref{Thm4.2}]
According to \cite[Row $b = 86$ of Table 3]{CHNP},
there are rectangular twisted connected sums with the same invariants,
but those have~$\bar\nu=0$ by Corollary~\ref{MainCor}.
Applying Theorem~\ref{thm:torfreeclass}, there is a diffeomorphism between
those rectangular twisted connected sums and Example \ref{ex:pi6rk2}.
Moreover, because they both have $\nu = 24 \in \Z/48$ by~\eqref{eq:relation},
Proposition~\ref{prop:nu_classifies} implies that the diffeomorphism may
be chosen so that the \gtstr s are homotopic.
Nevertheless, since the values of $\bar\nu$ 
differ, the metrics lie in different components of the $G_2$ moduli space.
In other words, while the equality of $\nu$ has helped ensure that the two
torsion-free \gtstr s can be connected by a path of \gtstr s,
the differing values of $\bar \nu$ mean that they cannot possibly
be connected by a path of torsion-free \gtstr s. 
\end{proof}

\section{The Signature Eta Invariant}\label{Kap6}
We use the gluing formulas of Bunke and Kirk and Lesch,
in particular Theorem~8.12
and~(8.32) of~\cite{KiLe}, to determine the $\eta$-invariant
of the odd signature operator~$B_M$ on~$M_\ell$.
We show that the two halves~$M_\pm$ do not contribute if~$k_\pm\le 2$,
and we compute the gluing contribution in terms of the gluing
angle~$\thet$ and the integer~$m_\rho(\latc;N_+,N_-)\in\Z$
determined by the configuration~$N_+$,
$N_-\subset \latc$, see Definition~\ref{def2.6}.
Throughout this section, $H^\bullet(\punkt)$ will always denote
cohomology with real coefficients.

\subsection{The Gluing Formula for the Odd Signature Operator}\label{Sect4.1a}
We review the gluing formula by Kirk and Lesch, see Theorem~\ref{Thm2.5} below,
deviating slightly from~\cite{KiLe}.
First, we only consider manifolds of dimension~$4k-1$.
Next, we use a different description~\eqref{eqn:KLidf}
of differential forms near the gluing hypersurface.
Then the boundary operator~$A$ in~\eqref{2.10}
below will be the Hodge-Dirac operator on the gluing hypersurface~$X$.
Finally, we will check that the gluing
contribution~$m_{H^\bullet(\X)}(\lagr_{B_+},\lagr_{B_-})$ can be computed
within the cohomology in middle degree~$2k-1$, see Lemma~\ref{MiddleLem}.

For the moment,
let~$(M,g)$ be a closed oriented Riemannian manifold of dimension~$4k-1$.
We assume that~$M=M_+\cup_XM_-$ with~$X=\del M_+=\del M_-$,
and that~$M$ contains a subset which is isometric to~$(-1,1)\times X$,
with~$X$ itself being identified with~$\{0\}\times X$.

We consider the Hodge operators~$*_M$ and~$*_X$ of~$M$ and~$X$.
For each~$p$, we define
\begin{equation}\label{2.8}
  \bar*_M=*_M\circ(-1)^{k+\frac{p(p+1)}2}\qquad\text{and}\qquad
  \bar*_X=*_X\circ(-1)^{k+\frac{p(p-1)}2}
\end{equation}
on $p$-forms.
Then~$\bar*_M^2=1$ and~$\bar *_X^2=-1$.
The odd signature operator acts
as~$B_{M}=d_M\bar *_M+\bar *_Md_M$ on~$\Omega^\ev(M)$,
see~\mbox{\cite[eq~(4.6)]{APS1}}.
For~$w\in TM$,
let~$\eps_w$ denote exterior multiplication with the 1-form~$g^{TM}(w,\punkt)$,
and let~$\iota_w$ denote its adjoint.
Then~$B_{M}$ is the Dirac operator associated
with the Clifford multiplication
on~$\Lambda^\ev T^*M$ defined for each~$w\in TM$ by
\begin{equation}\label{2.9}
  c^M_w
  =\eps_w\circ\bar*_M+\bar*_M\circ\eps_w
  =(\eps_w-\iota_w)\circ\bar*_M\;.
\end{equation}

Let~$t$ denote a coordinate in the cylinder direction
that is increasing from~$M_-$ towards~$M_+$.
Let~$p_X\colon (-1,1)\times X\to X$ be the projection.
Then there is an identification of bundles
\begin{equation}\label{eqn:KLidf}
  \begin{gathered}
    \Lambda^\ev T^*\bigl((-1,1)\times X\bigr)\cong p_X^*\Lambda^\bullet T^*X\\
    \alpha+\beta\,dt\longmapsto %
    \alpha+\beta
  \end{gathered}
\end{equation}
for all~$\alpha\in\Lambda^\ev T^*X$, $\beta\in\Lambda^\odd T^*X$;
here we have replaced~$*_M$ as in~\cite[Section~8.1]{KiLe}
by~$\eps_{\del_t}+\iota_{\del_t}$
for simplicity---the resulting operators are conjugate
by an isomorphism that acts as the identity on even forms and as~$-*_X$
on odd forms.
Under the identification~\eqref{eqn:KLidf}, the odd signature operator becomes
\begin{equation}
  \begin{gathered}\label{2.10}
    B_M|_{(-1,1)\times X}=\gamma\biggl(\frac\del{\del t}+A\biggr)\;,\\
    \text{with}\qquad
    \gamma=-\bar *_X
    \qquad\text{and}\qquad
    A=d_X+\bar *_Xd_X\bar *_X=d+d^*\;.
  \end{gathered}
\end{equation}
\newif\ifBdryOp\BdryOpfalse
\ifBdryOp
To verify this claim,
consider~$\alpha\in C^\infty((-1,1),\Omega^\ev(X))$
and~$\beta\in C^\infty((-1,1),\Omega^\odd(X))$.
We can check that~$\bar *_M=\bar *_X\circ(\eps_{\del_t}-\iota_{\del t})$.
Hence
\begin{align*}
  B_M(\alpha+\beta\,dt)
  &=\bar *_M\biggl(d_x\alpha
  +\Bigl(\frac{\del\alpha}{\del t}+d_X\beta\Bigr)\,dt\biggr)
  +d_M\bigl(\bar *_X\alpha\,dt+\bar *_X\beta\bigr)\\
  &=-\bar *_X\biggl(d_X\alpha\,dt+\frac{\del\alpha}{\del x}+d_X\beta\biggr)
  +d_X\bar *_X\alpha\,dt+d_X\bar *_X\beta-\bar *_X\,\frac{\del\beta}{\del t}\,dt\\
  &=-\bar *_X\biggl(\frac\del{\del t}(\alpha+\beta\,dt)
  +(d_X+\bar *_Xd_X\bar*_X)(\alpha\,dt+\beta)\biggr)\;.
\end{align*}
\fi

The endomorphism~$\gamma$ induces a complex structure on~$\Omega^\bullet(X)$,
and because it anticommutes with~$A$, also on~$\ker A\cong H^\bullet(X)$.
Together with the $L^2$-Hermitian metric~$g_{L^2}$,
it induces symplectic structures~$\omega$ on both vector spaces.
By the definition of the Hodge operator~$*_X$,
for harmonic forms~$\alpha\in H^p(X)$, $\beta\in H^{4k-2-p}(X)$, we have
\begin{equation}\label{eqn:sympl}
  \omega(\alpha,\beta)=g_{L^2}(\alpha,\gamma\beta)
  =\int_X\alpha\wedge *_X(-\bar *_X\beta)
  =(-1)^{k+\frac{p(p-1)}2-1}\int_X\alpha\wedge\beta\;.
\end{equation}
We also notice that the middle dimensional cohomology~$H^{2k-1}(X)$
is a symplectic subspace
and that~$\omega|_{H^{2k-1}(X)}$ agrees with the restriction
of the symplectic form in~\cite[(8.5)]{KiLe}.

For the gluing formula,
we need boundary conditions for the odd signature operators on~$M_\pm$.
These should lead to self-adjoint operators~$B_{M_\pm}$,
and their $\eta$-invariants should be well-defined.
From the large supply of possible choices
described in~\cite[Definition~2.1]{KiLe},
we will only consider modified APS boundary conditions,
by which we mean the following.
Given a Lagrangian subspace~$\lagr_+\subset\ker A$,
we take as a domain for~$B_+$ the subspace of~$\Omega^\ev(M_+)$
consisting of forms~$\alpha+\beta\,dt$ such that~$\alpha|_X$ and~$\beta|_X$
are perpendicular to~$\lagr_+$ and to all eigenspaces of~$A$
for positive eigenvalues.
Then we let~$\eta_{\APS}(B_{M_+};\lagr_+)$ denote the $\eta$-invariant of~$B_{M_+}$
with respect to these boundary conditions.

For~$B_{M_-}$ we proceed similarly.
However, since~$t$ is an outward normal coordinate for~$M_-$,
the corresponding boundary operator is now given by~$-A$.
Hence, given a Lagrangian subspace~$\lagr_-\subset\ker(A)$,
we now demand that~$\alpha|_X$ and~$\beta|_X$
are perpendicular to~$\lagr_-$ and to all eigenspaces for negative
eigenvalues of~$A$.
We write~$\eta_{\APS}(B_{M_-};\lagr_-)$ for the corresponding $\eta$-invariant.
Then~$\eta_{\APS}(B_{M_\pm};\lagr_\pm)$ corresponds
to~$\eta(B_M,M_\pm;V_\pm\oplus F_0^\pm)$
in the notation of \mbox{\cite[Theorem~8.8]{KiLe}}, with~$V_\pm=\lagr_{B_\pm}$.

\newif\ifPoincareLefschetz\PoincareLefschetzfalse
\ifPoincareLefschetz
To fix natural Lagrangian subspaces~$\lagr_\pm\subset\ker A\cong H^\bullet(\X)$,
let us consider the diagram
\begin{equation*}
  \begin{tikzpicture}
    \matrix (D) [matrix of math nodes, column sep=1cm, row sep=1cm]{
      \cdots & H^p(M_\pm) & H^p(\X) & H^{p+1}(M_\pm,X) & \cdots \\
      \cdots & H^{4k-1-p}(M_\pm,\X)^* & H^{4k-2-p}(\X)^* &
      H^{4k-2-p}(M_\pm)^* & \cdots \\
    } ;
    \begin{scope}[->]
      \draw (D-1-1) -- (D-1-2) ;
      \draw (D-1-2) -- node[above] {$\iota^*$} (D-1-3) ;
      \draw (D-1-3) -- node[above] {$\delta$} (D-1-4) ;
      \draw (D-1-4) -- (D-1-5) ;
      \draw (D-2-1) -- (D-2-2) ;
      \draw (D-2-2) -- (D-2-3) ;
      \draw (D-2-3) -- (D-2-4) ;
      \draw (D-2-4) -- (D-2-5) ;
      \draw (D-1-2) -- node[right] {$\cong$} (D-2-2) ;
      \draw (D-1-3) -- node[right] {$\cong$} (D-2-3) ;
      \draw (D-1-4) -- node[right] {$\cong$} (D-2-4) ;
    \end{scope}
  \end{tikzpicture}
\end{equation*}
The upper row is the long exact de Rham cohomology sequence
of the pair~$(M_\pm,\X)$.
The connecting homomorphism~$\delta$ sends~$[\beta]\in H^p(\X)$
to~$[d\bar\beta]\in H^{p+1}(M_\pm,\X)$,
where~$\bar\beta\in\Omega^p(M_\pm)$ is an arbitrary
smooth extension of~$\beta$.
The lower row is the dualised sequence.
Each vertical arrow maps a cohomology class~$[\alpha]$ to the map
\begin{equation*}
  [\beta]\longmapsto\int\alpha\wedge\beta\;,
\end{equation*}
where the integral is over~$M_\pm$ or~$X$, respectively.
By Poincar\'e-Lefschetz duality, these maps are isomorphisms.
The diagram commutes up to sign.
For the two squares drawn above, this follows from Stokes theorem.

We now choose subspaces
\begin{equation}\label{2.11}
  \lagr_{B_\pm}=\im\bigl(H^\bullet(M_{\pm,\ell})\to H^\bullet(\X)\bigr)
  \subset H^\bullet(\X)\;.
\end{equation}
By a diagram chase, the~$\lagr_{B_\pm}$ are Lagrangian subspaces.
Indeed, if~$\alpha\in H^p(M_\pm)$ and~$\beta\in H^{4k-1-p}(M_\pm)$,
Stokes theorem gives
\begin{equation*}
  (-1)^{k+\frac{p(p-1)}2-1}\omega(\iota^*\alpha,\iota^*\beta)
  =\int_X\iota^*(\alpha\wedge\beta)=\int_{M_\pm}d(\alpha\wedge\beta)=0\;.
\end{equation*}
On the other hand, if~$\gamma\in H^p(M_\pm)$
satisfies~$\omega(\gamma,\iota^*\beta)=0$
for all~$\beta\in H^{4k-1-p}(M_\pm)$,
then~$\gamma$ maps to~$0\in H^{4k-2-p}(M_\pm)^*$,
hence it has a preimage~$\alpha\in H^p(M_\pm)$.
\else %
We now choose subspaces
\begin{equation}\label{2.11}
  \lagr_{B_\pm}=\im\bigl(H^\bullet(M_{\pm,\ell})\to H^\bullet(\X)\bigr)
  \subset H^\bullet(\X)\;.
\end{equation}
By a standard argument involving the long exact cohomology sequences
of the pairs~$(M_\pm,X)$ and Poincar\'e-Lefschetz duality for manifolds
with boundary,
one checks that~$\lagr_{B_\pm}\subset H^\bullet(X)$ are Lagrangian subspaces
with respect to the symplectic structure~\eqref{eqn:sympl}.
\fi

\begin{dfn}[{%
      \cite[Definition~8.14]{KiLe}}]\label{BunkeDef}
  Let~$(H,\gamma,g)$ be a Hermitian vector space,
  where~$\gamma$ denotes the complex structure,
  and let~$\lagr_\pm\subset H$ be two Lagrangian subspaces
  with respect to~$\omega=g(\punkt,\gamma\punkt)$.
  Let~$E_{\pm i}\subset H\otimes_\R\C$ denote the $\pm i$-eigenspace of~$\gamma$.
  Define unitary maps~$\Phi(\lagr_\pm)\colon E_i\to E_{-i}$
  such that~$\lagr_\pm\otimes_\R\C$ is the graph of~$\Phi(\lagr_\pm)$.
  Then the \emph{Maslov angle} between~$\lagr_+$ and~$\lagr_-$ is given as
  \begin{equation*}
    m_H(\lagr_+,\lagr_-)
    =\dim(\lagr_+\cap \lagr_-)
    -\frac1{\pi i}\tr\log\bigl(-\Phi(\lagr_+)\Phi(\lagr_-)^*\bigr)\in\R\;.
  \end{equation*}
\end{dfn}

Here, the branch of the logarithm is chosen such that~$\log(-1)=\pi i$,
which implies that the eigenvalue~$1$ of~$\Phi(\lagr_+)\Phi(\lagr_-)^*$
with multiplicity~$\dim(\lagr_+\cap \lagr_-)$ does not contribute.
We have chosen the name ``Maslov angle'' because of its relation to the Maslov
triple index, \mbox{see \cite[Proposition~1.7]{BuGlu}.}

\begin{rmk}\label{BunkeRem}
Let~$A_\pm$ be the $\R$-linear isometric involutions of~$H$
that anticommute with~$\gamma$
and fix the Lagrangian subspaces~$\lagr_\pm\subset H$ pointwise.
Then
the maps~$\Phi(\lagr_\pm)\colon E_i\to E_{-i}$ and~$\Phi(\lagr_+)\Phi(\lagr_-)^*$
above are given by
\begin{align*}
  \Phi(\lagr_\pm)
  &=2\,\frac{1+i\gamma}2\,\frac{1+A_\pm}2\,\frac{1-i\gamma}2
  =\frac{1+i\gamma}2\,A_\pm\;,\\
  \text{and}\qquad
  \Phi(\lagr_+)\Phi(\lagr_-)^*
  &=\frac{1+i\gamma}2\,A_+A_-\,\frac{1+i\gamma}2
  =\frac{1+i\gamma}2\,A_+A_-\;.
\end{align*}
We thus recover Lesch-Wojciechowski's description in~\cite[Theorem~2.1]{LW},
see also Bunke~\cite[Definition 1.3]{BuGlu}.
Let \mbox{$e^{i\phy_1}$, \dots, $e^{i\phy_k}$} denote the eigenvalues
of~$-A_+A_-|_{E_{-i}}$ with~$\phy_j\in(-\pi,\pi]$, then
\begin{equation}\label{Mdef}
  m_H(\lagr_+,\lagr_-)=-\sum_{\phy_j\ne\pi}\frac{\phy_j}\pi\;.
\end{equation}
\end{rmk}

We come back to the choice of Lagrangian subspaces in~\eqref{2.11}.
To prove that the Maslov angle~$m_{H^\bullet(\X)}(\lagr_{B_+},\lagr_{B_-})$
only depends on the subspaces~$\lagr^3_{B_\pm}=\lagr_{B_\pm}\cap H^3(\X)$,
we write
\begin{equation*}
  \lagr_{B_\pm}=\bigoplus_{p=0}^{4k-2}\lagr_{B_\pm}^p\qquad\text{with}\qquad
  \lagr_{B_\pm}^p=\im\bigl(H^p(M_{\pm,\ell})\to H^p(\X)\bigr)\;.
\end{equation*}
Then~$\lagr_{B_\pm}^{4k-2-p}=(\lagr_{B_\pm}^p)^\perp$
with respect to the intersection form,
so~$\lagr_{B_\pm}^{4k-2-p}=(\gamma \lagr_{B_\pm}^k)^\perp$ with respect to the $L^2$-metric.
In particular,
the involutions~$A_{B_\pm}$ in Remark~\ref{BunkeRem}
preserve each individual~$H^k(X)$.

\begin{lem}\label{MiddleLem}
  We have
	$$m_{H^\bullet(\X)}(\lagr_{B_+},\lagr_{B_-})
	=m_{H^{2k-1}(\X)}\bigl(\lagr^{2k-1}_{B_+},\lagr^{2k-1}_{B_-}\bigr)\;.$$
\end{lem}

\begin{proof}
  We note that~$-A_{B_+}A_{B_-}$ commutes with~$\gamma=-\bar*_\X$,
  which interchanges~$H^p(\X;\bbc)$ and~$H^{4k-2-p}(\X;\bbc)$.
  The action of~$-A_{B_+}A_{B_-}$
  on~$E_{-i}\cap\bigl(H^p(\X;\bbc)\oplus H^{4k-2-p}(\X;\bbc)\bigr)$
  is therefore iso\-morphic to the complexification of the action
  on the real vector space~$H^p(\X)$ for all~$p<2k-1$.
  Because of this, the spectrum
  of~$-\Phi(\lagr_{B_+})\Phi(\lagr_{B_-})^*|_{H^p(\X)\oplus H^{4k-2-p}(\X)}$
  is invariant under complex conjugation for all~$p<2k-1$.
  Hence for these~$p$, we have
  \begin{equation*}
    m_{H^p(\X)\oplus H^{4k-2-p}(\X)}
    \bigl(\lagr^p_{B_+}\oplus \lagr^{4k-2-p}_{B_+},\lagr^p_{B_-}\oplus \lagr^{4k-2-p}_{B_-}\bigr)
    =0\;,
  \end{equation*}
  and the claim follows.
\end{proof}

We can now state the slightly simplified gluing formula.

\begin{thm}[{\cite[equation (8.32)]{KiLe}}]\label{Thm2.5}
  Let~$M=M_+\cup_\X M_-$, and let~$B_M$
  denote the odd signature operator.
  Then
  \begin{equation*}
    \eta(B_M)
    =\eta_{\APS}(B_{M_-};\lagr_{B_-})+\eta_{\APS}(B_{M_+};\lagr_{B_+})
	+m_{H^{2k-1}(X)}\bigl(\lagr^{2k-1}_{B_+},\lagr^{2k-1}_{B_-}\bigr)\;.
  \end{equation*}
\end{thm}

\subsection{A family of gluing metrics}\label{Sect4.1}
Recall that we constructed a closed, but not torsion-free
$G_2$-structure~$\phy_\ell$ %
in Section~\ref{subsec:glue}, equations~\eqref{2.4}--\eqref{2.6a}.
By Theorem~\ref{Thm2.1}, for each~$\ell\gg1$,
there exists a torsion free $G_2$-structure~$\bar\phy_\ell$
near~$\phy_\ell$ in the same cohomology class.
Let~$\bar g_\ell$ denote the Riemannian metric induced by~$\bar\phy_\ell$.
To apply the gluing formula above,
we need to equip the manifold~$M_\ell$ in~\eqref{eqn:Ml}
with a new Riemannian metric~$g_\ell$ that is of product type
near the gluing hypersurface~$\X$.
Although we are interested in~$(M_\ell,\bar g_\ell,\bar\phy_\ell)$,
most computations will be done on~$(M_\ell,g_\ell,\phy_\ell)$.
The metric~$g_\ell$ will be sufficiently close to the metric
associated with~$\phy_\ell$.
Theorem~\ref{Thm2.1} can then be used to compare connections, Dirac operators,
curvature tensors and related geometric magnitudes defined
by the metrics~$g_\ell$ and~$\bar g_\ell$.

We still use the coordinates~$t_\pm$ and~$t$ of Section~\ref{Kap2}
with~$t_\pm-\ell=2\mp t$, see~\eqref{tCoord}.
Then~$M_\ell$ contains a cylindrical
piece diffeomorphic to~$X\times(-\ell-2,\ell+2)$,
with~$X$ as in~\eqref{eqn:X}.
Let~$t\colon M_\ell\to\R$ be a smooth function that agrees with the cylindrical
coordinate on this region and takes values
outside~$(-\ell-2,\ell+2)$ otherwise.
We may rewrite~\eqref{eqn:Ml} as
\begin{equation*}
  M_\ell=M_{-,\ell+1}\cup_{[-1,1]\times X}M_{+,\ell+1}\;.
\end{equation*}

Let~$g^{V_\pm}$ denote the asymptotically cylindrical Calabi-Yau metric
on~$V_\pm$, which is $\Gamma_\pm$-invariant.
Let~$g^{\Sigma_\pm}+du_\pm^2+dt_\pm^2$ denote the
cylindrical metric to which~$g^{V_\pm}$ is asymptotic.
Let~$\rho\colon\R\to[0,1]$ be a smooth cutoff function
as in~\eqref{2.6},
so~$\rho(x)=0$ for~$x\le 0$ and~$\rho(x)=1$ for~$x\ge 1$.
We define a $\Gamma_\pm$-invariant metric~$g^{V_\pm}_{\ell+1}$
on~$V_{\pm,\ell+1}=V_\pm\setminus((\ell+3,\infty)\times S^1_{\lnn_\pm}\times\K_\pm)$ by
\begin{equation}\label{eq:gldef}
  g^{V_\pm}_\ell
  =\bigl(1-\rho(t_\pm-\ell)\bigr)\,g^{V_\pm}
  +\rho(t_\pm-\ell)\,\bigl(g^{\Sigma_\pm}+du_\pm^2+dt_\pm^2\bigr)\;,
\end{equation}
take the product with a circle~$S^1_{\xi_\pm}$ of length~$\xi_\pm$,
and finally obtain a metric~$g^{M_\pm}_{\ell+1}$
on~$M_{\pm,\ell+1}=(V_{\pm,\ell+1}\times S^1_{\xi_\pm})/\Gamma_\pm$.
If we regard~$M_{\pm,\ell+1}$ as subsets of~$M_\ell$,
then the metrics~$g^{M_\pm}_{\ell+1}$ agree on the
intersection~$M_{+,\ell+1}\cap M_{-,\ell+1}\cong[-1,1]\times X$.
Hence we obtain a Riemannian metric~$g_\ell$ on~$M_\ell$.

\begin{rmk}\label{Rem2.2}
  There exists a constant~$c>0$ such that the
  Riemannian metric~$g_\ell$ has the following properties,
  which will be needed below.
  \begin{enumerate}
  \item\label{2.g.1}
    The subset~$M_{\pm,\ell+1}\subset(M_\ell,g_\ell)$
    corresponding to~$\pm t\ge-1$ is isometric to a twisted
    product~${(V_{\pm,\ell+1}\times S^1_{\xi_\pm})/\Gamma_\pm}$
    of the space~$(V_{\pm,\ell+1},g^{V_\pm}_\ell)$ and a circle~$S^1_{\xi_\pm}$
    of length~$\xi_\pm$.
  \item\label{2.g.2}
    On the smaller subset~$M_{\pm,\ell-2}$, that is,
    for~$\pm t\ge 2$, the factor~$\bigl(V_{\pm,\ell-2},g^{V_\pm}_\ell\bigr)$
    in~\ref{2.g.1} is isometric
    to a subset of the original asymptotically cylindrical
    Calabi-Yau manifold~$\bigl(V_\pm,g^{V_\pm}\bigr)$.
  \item\label{2.g.3}
    The subset~$X\times[-1,1]=M_{-\ell+1}\cap M_{+,\ell+1}$
    is the Riemannian product of the K3 surface~$\Sigma$,
    the torus~$T^2\cong(S^1_{\zeta_\pm}\times S^1_{\xi_\pm})/\Gamma_\pm$
    and the interval~$[-1,1]$.
  \item\label{2.g.4} For~$1\le\pm t\le 2$ and for all~$k$
    we have~$\norm{g_\ell|_{X\times(\pm[1,2])}-g^X\oplus dt^2}_{C^k}=O(e^{-c\ell})$.
  \item\label{2.g.5}
    Let~$\bar g_\ell$ denote the $G_2$-metric induced by~$\bar\phy_\ell$.
    For each~$k$,
    there is an estimate of the form
    \begin{equation*}
      \norm{g_\ell-\bar g_\ell}_{C^k}=O\bigl(e^{-c\ell}\bigr)\;.
    \end{equation*}
  \end{enumerate}
  It follows from~\ref{2.g.2} and~\ref{2.g.3} that~$g_\ell$ is induced
  by~$\phy_\ell$ and the local holonomy group of~$g_\ell$ is a subgroup of~$G_2$
  except over the set~$X\times([-2,-1]\cup[1,2])$,
  where the metric is controlled by~\ref{2.g.1} and~\ref{2.g.4}.
\end{rmk}

\subsection{Spectral Symmetry of the Odd Signatures Operators on the Halves}
\label{Abs2.3}
We recall the action of~$\Gamma_\pm$
on~$\widetilde M_\pm=V_\pm\times S^1_{\xi_\pm}$ described
at the end of Section~\ref{subsec:acyl}.
The following proposition is the central result of this section.

\begin{prop}\label{prop2.3}
  Assume that~$\Gamma_\pm\cong\Z/k_\pm$ with~$k_\pm\in\{1,2\}$.
  Then
  \begin{equation*}
    \eta_{\APS}\bigl(B_{M_\pm,\ell};\lagr_{B_\pm}\bigr)=0\;.
  \end{equation*}
\end{prop}

In the sequel~\cite{GN},
we will consider~$\Gamma_\pm\cong\Z/k_\pm$ where~$k_\pm$ may be larger than~$2$.
In this case, the $\eta$-invariant~$\eta_{\APS}\bigl(B_{M_\pm};\lagr_{B_\pm}\bigr)$
can be nonzero.

\begin{proof}
  For simplicity, we restrict our attention to~$M_{-,\ell}$.
  We consider an involutive isometry~$\tilde\kappa_-$
  of~$\widetilde M_-$
  that acts as identity on~$V_-$ and as a reflection on~$S^1_{\xi_-}$.
  To see that~$\tilde\kappa_-$ descends to an isometry on~$M_-$,
  assume that~$\Gamma_-$ is nontrivial.
  Then~$k_-=2$ by assumption, and there exists exactly one
  nontrivial element~$\tau_-\in\Gamma_-$,
  which rotates~$S^1_{\xi_-}$ by~$\pi$.
  We see that~$\tilde\kappa_-$ and~$\tau_-$ commute,
  so~$\tilde\kappa_-$ indeed descends to an orientation
  reversing isometry~$\kappa_-$ of~$M_-$.

  Let~$\anglex$ denote the coordinate of the exterior circle~$S^1_{\xi_-}$.
  Let~$c_\anglex$ denote Clifford multiplication by~$\del_\anglex$
  as in~\eqref{2.9}.
  Then~$c_\anglex$ anticommutes with~$B_{M_-}|_{V_-}$, and
  the reflection of~$S^1_{\xi_-}$ anticommutes with~$\frac\del{\del \anglex}$.
  We lift~$\kappa_-$ to an action~$\bar \kappa_-$ on~$\Omega^\ev(M_-)$
  by putting
  \begin{equation*}
    \bar \kappa_-\alpha=c_\anglex\kappa_-^*\alpha
    \qquad\text{for all }\alpha\in\Omega^{2p}(M)\;.
  \end{equation*}
  Because~$B_{M_-}$ is a Dirac operator, it anticommutes with~$\bar \kappa_-$.
  The same holds for~$B_{M_-,\ell}$ on~$M_{-,\ell}$
  because the metric~$g_\ell$
  is still a product metric on~$\widetilde M_{-,\ell}$
  by Remark~\ref{Rem2.2}~\ref{2.g.1}.
  
  On the other hand, $\bar \kappa_-$ also anticommutes with~$\gamma=-\bar *_X$
  by~\eqref{2.10},
  and hence, $\bar \kappa_-$ commutes with~$A=\bar *_XB_{M_-,\ell}|_X$
  and preserves the unmodified APS-boundary conditions.
  Because~$\bar\kappa_-$ acts on~$\ker(B_{M_-,\ell})$ and on~$\ker(A)$,
  it also preserves the Lagrangian~$\lagr_{B_-}$ of~\eqref{2.11},
  and hence the domain of the operator~$B_{M_-,\ell}$ under the modified
  APS boundary conditions introduced in Section~\ref{Sect4.1a}.
  It follows that with these boundary conditions,
  the operator~$B_{M_-,\ell}$ has symmetric spectrum,
  and so the $\eta$-invariant~$\eta_{\APS}(B_{M_-,\ell};\lagr_{B_-})$ vanishes.
\end{proof}

\subsection{The Maslov Angle of the Odd Signature Operator}\label{Abs2.5}
We now investigate the \linebreak number $m_{H^3(X)}(\lagr^3_{B_+},\lagr^3_{B_-})$.
Let~$\widetilde X_\pm=\del\widetilde M_{\pm,\ell}
=\K_\pm\times S^1_{\lnn_\pm}\times S^1_{\lnx_\pm}$.
We remark that the action of each element of~$\Gamma_\pm\cong\Z/k_\pm$
on~${\widetilde X_\pm}$
is homotopic to the identity because~$\K$ is fixed pointwise
and the two circles~$S^1_{\lnn_\pm}$ and~$S^1_{\lnx_\pm}$ are rotated
by elements of~$\Gamma_\pm$.
Hence $\Gamma_\pm$ acts trivially on~$H^\bullet(\widetilde \X_\pm)$.
independent of~$k_\pm$.
In particular, pullback along~$\tilde X_\pm\to\tilde X_\pm/\Gamma_\pm\cong\X$
induces isomorphisms~$H^\bullet(\X)\cong H^\bullet(\widetilde \X_\pm)$.
Because ${H^1(\K)=H^3(\K)=0}$,
the space~$H^3(\X)$ takes the form
\begin{equation*}
  H^3(\X)\cong H^3(\widetilde \X_\pm)\cong H^2(\K)\otimes_\bbr H^1(T^2)\;,
\end{equation*}
and the intersection form on~$H^3(\X)$ is the tensor product of the intersection
forms on~$H^2(\K)$ and on~$H^1(T^2)$.
The same holds for the $L^2$-metrics and the Hodge star operators.
Note that by Definition~\ref{BunkeDef} the Maslov
angle~$m_{H^3(X)}(\lagr_{B_+}^3,\lagr_{B_-}^3)$ depends
on~$\gamma=-\bar{\mathord{*}}_X$ and on the $L^2$-metric.

Because $\Gamma_\pm$ acts trivially on $\K$ and we use cohomology with real
coefficients, %
it is clear that
\begin{equation}\label{HodgeLagrangian}
	\lagr_{B_\pm}^3=\im\bigl(H^3(V_\pm {\times} S^1_{\lnx_\pm})^{\Gamma_\pm}\to H^3(\X)\bigr)
        \cong\im\bigl(H^3(V_\pm {\times} S^1_{\lnx_\pm}) \to H^3(\widetilde \X_\pm)\bigr) \;.%
\end{equation}
Let~$d\anglen_\pm\in\Omega^1(S^1_{\lnn_\pm})$ and~$d\anglex_\pm\in\Omega^1(S^1_{\lnx_\pm})$
be generators of~$H^2(T^2)$.
Recall the polarising lattices $N_\pm$ of Definition~\ref{NpmDef}
and write~$N_{\pm,\R}=N_\pm\otimes_\Z\R=\im\bigl(H^2(V_\pm) \to H^2(\K)\bigr)$.
From the K\"unneth formula, we see that there is another subspace~$T_{\pm,\R}\subset H^2(\K)$
such that~$\lagr_{B_\pm}$ is the direct sum of the two subspaces
\begin{align*}
  N_{\pm,\R}\,d\anglex_\pm&=\im\bigl(H^2(V_\pm)\otimes H^1(S^1_{\lnx_\pm})
  \longrightarrow H^2(\K)\otimes H^1(S^1_{\lnx_\pm})\bigr)\;,\\
  T_{\pm,\R}\,d\anglen_\pm&=\im\bigl(H^3(V_\pm)
  \longrightarrow H^2(\K)\otimes H^1(S^1_{\lnn_\pm})\bigr)\;.
\end{align*}
Because we know that~$\lagr_{B_\pm}^3\subset H^3(\X)$ is a Lagrangian subspace
with respect to the intersection form,
we immediately see that~$T_{\pm,\R}=N_{\pm,\R}^\perp$ with respect
to the intersection form.

To describe the Maslov angle as in Definition~\ref{BunkeDef},
we need~$T_{\pm,\R}=N_{\pm,\R}^\perp$ also with respect to the $L^2$-metric.
The space~$N_\pm$ clearly contains the K\"ahler form of~$\Sigma_\pm$.
By~\eqref{2.2} and~\eqref{2.6} and because~$dt=dt_-$,
the limiting value of the holomorphic volume form~$\Omega_{V_\pm,\infty}$
on~$V_\pm$ is given as
\begin{equation*}
  \Omega_{V_\pm,\ell}|_{(-1,1)\times X}
  =\bigl(du_\pm\pm i\,dt\bigr)\wedge\Omega_{\Sigma_\pm}\;,
\end{equation*}
where~$\Omega_{\Sigma_\pm}$ denotes the holomorphic volume form of~$\Sigma_\pm$.
Therefore, $\Re\Omega_{\K_\pm}$, $\Im\Omega_{\K_\pm}\in T_\pm$.
Because~$(\omega_{\Sigma_\pm},\re\Omega_{\Sigma_\pm},\im\Omega_{\Sigma_\pm})$
forms a basis of~$H^{2,+}(\K)$,
we obtain the first equality in
\begin{align*}
  H^{2,+}(\K)
  &=\bigl(H^{2,+}(\K)\cap N_{\pm,\R}\bigr)\oplus\bigl(H^{2,+}(\K)\cap T_{\pm,\R}\bigr)\;,\\
  H^{2,-}(\K)
  &=\bigl(H^{2,-}(\K)\cap N_{\pm,\R}\bigr)\oplus\bigl(H^{2,-}(\K)\cap T_{\pm,\R}\bigr)\;.
\end{align*}
The second follows because~$T_{\pm,\R}$ is the orthogonal complement
of~$N_{\pm,\R}$ with respect to the intersection form.
This implies in particular that~$N_\pm$ is perpendicular to~$T_\pm$
also with respect to the $L^2$-metric.
We can now construct the reflections~$A_\pm$ about~$N_\pm$
and conclude that the condition \eqref{eq:preserve} in Definition~\ref{def:angles} is satisfied.

Let~$A^3_{B_\pm}$ denote the reflections of~$H^3(X)$
along the Lagrangians~$\lagr_{B_\pm}^3$,
and let~$A_{\anglex_\pm}$ denote the reflections %
of~$H^1(T^2)$ at~$[d\anglex_\pm]\R$, respectively.
Then
\begin{equation*}
	A^3_{B_\pm}=A_{\pm}\otimes A_{\anglex_\pm}
	\in\Aut\bigl(H^2(\K)\bigr)\otimes\Aut\bigl(H^1(T^2)\bigr)
        =\Aut\bigl(H^3(\X)\bigr)\;.
\end{equation*}

From~\eqref{2.8} and~\eqref{2.10},
we see that
\begin{equation*}
  \gamma|_{H^3(X)}
  =*_X|_{H^3(X)}\cong *_\Sigma|_{H^2(\Sigma)}\otimes *_{T^2}|_{H^1(T^2)}\;.
\end{equation*}
The $\pm1$-eigenspaces of~$*_\K$ on~$H^2(\K)$
are the spaces~$H^{2,\pm}(\K)$ of selfdual and antiselfdual forms.
Hence,
the $(-i)$-eigenspace of~$\gamma$ takes the form
\begin{equation}\label{2.14}
  H^{2,+}(\K)\otimes E_{-i} \; \oplus \; H^{2,-}(\K)\otimes E_i\;,
\end{equation}
where~$E_{\pm i}\subset H^1(T^2;\C)$
denote the corresponding eigenspaces of~$*_{T^2}$.

Now suppose that the angle from~$e_{\anglex_+}$ to~$e_{\anglex_-}$
is~$\thet\in(0,\pi)$,
then~$-A_{\anglex_+}A_{\anglex_-}$ is a rotation by~$\rho=\pi-2\thet$,
so
\begin{equation*}
  -A^3_{B_+}A^3_{B_-}=A_+A_-\otimes e^{\rho\,*_{T^2}}\;,
\end{equation*}
where we regard~$*_{T^2}$ as a complex structure on~$H^1(T^2)$.

Let~$\alpha_1^+$, $\alpha_2^+$, $\alpha_3^+$
and~$\alpha_1^-$, \dots, $\alpha_{19}^-\in(-\pi,\pi]$ denote the angles
of the orthogonal auto\-morphism $A_+A_-$
acting on~$H^{2,\pm}(\K;\C)$ %
as in Definition \ref{def:angles}.
Note that these angles occur in pairs with the same superscript
of the form~$\alpha$, $-\alpha$,
except for the angles~$0$ and~$\pi$, which may occur arbitrarily often.
By~\eqref{2.14}, the corresponding angles of~$-A^3_{B_+}A^3_{B_-}$
on the $(-i)$-eigenspace
of~$\gamma=\mathord{*_\K}\otimes\mathord{*_{T^2}}$ are
\begin{equation}\label{eqn:EffAngles}
  \alpha_1^+-\rho,\dots,\alpha_3^+-\rho,
  \alpha_1^-+\rho,\dots,\alpha_{19}^-+\rho\mod2\pi\;.
\end{equation}

\begin{prop}\label{Mprop}
  The Maslov angle in~\eqref{Mdef} is given by
  \begin{multline}\label{eq:mH3}
    m_{H^3(\X;\bbc)}(\lagr^3_{B_+},\lagr^3_{B_-})
    =-16\frac\rho\pi\\
    \begin{aligned}
      &\qquad
      -\Sign\rho\,\#\bigl\{\,j
      \bigm|\alpha_j^+\in\{\pi-\abs\rho,\pi\}\,\bigr\}
      -2\Sign\rho\,\#\bigl\{\,j
      \bigm|\alpha_j^+\in(\pi-\abs\rho,\pi)\,\bigr\}\\
      &\qquad
      +\Sign\rho\,\#\bigl\{\,j
      \bigm|\alpha_j^-\in\{\pi-\abs\rho,\pi\}\,\bigr\}
      +2\Sign\rho\,\#\bigl\{\,j
      \bigm|\alpha_j^-\in(\pi-\abs\rho,\pi)\,\bigr\}\;,
    \end{aligned}
  \end{multline}
  where by convention, $\Sign0=0$.
\end{prop}

Note that this number depends only on the configuration~$(\latc;N_+,N_-)$
from Definition~\ref{ConfDef}.

\begin{proof}
  Recall that to apply~\eqref{Mdef},
  we have to represent all the angles~$\alpha^\pm_j\mp\rho$ in~$(-\pi,\pi]$,
  and that angles in~$\{0,\pi\}$ do not contribute
  to~$m_{H^3(X;\C)}(\lagr^3_{B_+},\lagr^3_{B_-})$.
  Without these restrictions,
  we would expect that~$m_{H^3(X;\C)}(\lagr^3_{B_+},\lagr^3_{B_-})=-16\,\frac\rho\pi$
  because all angles except~$0$ and~$\pi$ occur in pairs of opposite signs,
  and~$\K$ has signature~$-16$.
  We will now see how individual angles may modify this result.
  For simplicity, we first consider only the angles~$\alpha^-_j$,
  and we assume~$\rho\in[0,\pi)$.
  \begin{enumerate}
  \item\label{case:rho0} The case~$\rho=0$ occurs if~$\thet=\frac\pi2$.
    This happens in particular for the classical twisted connected sums
    of~\cite{Kovalev}.
    The angles of~$-A_{B_+}^3A_{B_-}^3|_{E_{-i}}$ come in pairs,
    except for~$0$ and~$\pi$, which do not contribute.
    Hence,
    \begin{equation*}
      m_{H^3(X;\C)}(\lagr^3_{B_+},\lagr^3_{B_-})=0=-16\,\frac\rho\pi\;.
    \end{equation*}
  \end{enumerate}
  From now on, assume~$\rho\in(0,\pi)$.
  We only consider angles~$\alpha^-_j\in[0,\pi]$.
  If~$\alpha^-_j\in(0,\pi)$,
  there will be another index~$j'$ such that~$\alpha^-_{j'}=-\alpha^-_j$.
  We will treat these two angles together,
  so in the end, all angles will be accounted for.
  \begin{enumerate}\setcounter{enumi}1
  \item\label{case:alpha0}
    A single angle~$\alpha^-_j=0$ becomes~$\rho$ in~\eqref{eqn:EffAngles}
    and contributes by~$-\frac\rho\pi$ to~$m_{H^3(\X;\bbc)}(\lagr^3_{B_+},\lagr^3_{B_-})$
    in accordance with~\eqref{eq:mH3}.
  \item\label{case:alphasmall}
    Assume that~$\alpha^-_j\in(0,\pi-\rho)$.
    Then~$\alpha^-_j+\rho\in(-\pi,\pi)$ and~$-\alpha^-_j+\rho\in(-\pi,\pi)$,
    so both angles together contribute to~$m_{H^3(\X;\bbc)}(\lagr^3_{B_+},\lagr^3_{B_-})$ by
    \begin{equation*}
      -\frac{\alpha_j^-+\rho}\pi-\frac{\alpha_{j'}^-+\rho}\pi=-2\,\frac\rho\pi\;.
    \end{equation*}
  \item\label{case:alpharho}
    Assume~$\alpha^-_j=\pi-\rho$.
    Then~$\alpha^-_j+\rho=\pi$ does not contribute,
    but we also have~$\alpha^-_{j'}+\rho=2\rho-\pi\in(-\pi,\pi)$.
    Both angles together contribute to~$m_{H^3(\X;\bbc)}(\lagr^3_{B_+},\lagr^3_{B_-})$ by
    \begin{equation*}
      -\frac{\alpha^-_{j'}+\rho}\pi=-2\,\frac\rho\pi+1\;.
    \end{equation*}
    This agrees with~\eqref{eq:mH3} because one of the two angles is
    exactly~$\pi-\abs\rho$.
  \item\label{case:alphapi}
    Assume that we have a single angle~$\alpha^-_j=\pi$.
    Then we have to replace~$\alpha^-_j+\rho=\rho+\pi>\pi$
    by~$\rho-\pi\in(-\pi,\pi)$.
    The contribution of this angle to~$m_{H^3(\X;\bbc)}(\lagr^3_{B_+},\lagr^3_{B_-})$ is
    \begin{equation*}
      -\frac{\rho-\pi}\pi=-\frac\rho\pi+1\;.
    \end{equation*}
    Again, this fits with~\eqref{eq:mH3}.
  \item\label{case:alphalarge}
    Finally assume~$\pi-\rho<\alpha^-_j<\pi$.
    Because~$\alpha^-_j+\rho>\pi$,
    we have to replace it by~$\alpha^-_j+\rho-2\pi\in(-\pi,\pi)$.
    On the other hand,
    we have~$\alpha^-_{j'}+\rho=-\alpha^-_j+\rho\in(-\pi,\pi)$.
    These two angles together now contribute
    to~$m_{H^3(\X;\bbc)}(\lagr^3_{B_+},\lagr^3_{B_-})$ by
    \begin{equation*}
      -\frac{\alpha^-_j+\rho-2\pi}\pi-\frac{-\alpha^-_j+\rho}\pi
      =-2\,\frac\rho\pi+2\;.
    \end{equation*}
    This explains why angles in~$(\pi-\abs\rho,\pi)$
    count twice in~\eqref{eq:mH3}.
  \end{enumerate}
  The situation for~$\rho<0$ is analogous.
  Also, the angles~$\alpha^+_j$ behave for~$\rho>0$ as the angles~$\alpha^-_j$
  for~$\rho<0$ and vice versa.
  This completes the proof.
\end{proof}

Recall the integer~$m_\rho(\latc;N_+,N_-)$ defined in  Definition \ref{def2.6}.

\begin{thm}\label{thm2.6}
  Let~$\rho=\pi-2\thet$.
  Then the $\eta$-invariant of the odd signature operator on an extra-twisted
  connected sum~$M_\ell$ with gluing angle $\thet$ is given by
  \begin{gather*}
    \eta(B_M)
    =\eta_{\APS}\bigl(B_{M_{+,\ell}};\lagr_{B_+}\bigr)
    +\eta_{\APS}\bigl(B_{M_{-,\ell}};\lagr_{B_-}\bigr)
    -16\frac\rho\pi+m_\rho(\latc;N_+,N_-)\;.\qed
  \end{gather*}
\end{thm}

\begin{proof}
  We have determined the angles~$\alpha_1^+$, $\alpha_2^+$, $\alpha_3^+$
  in~\eqref{eq2:+rotation}.
  We distinguish two cases.
  \begin{itemize}
  \item If~$\thet=\frac\pi2$, we have~$\rho=0$,
    and there is no contribution to~$m_{H^3(\X;\bbc)}(\lagr_{B_+},\lagr_{B_-})$.
  \item If~$\thet\ne\frac\pi2$,
    then~$\alpha^+_{1,2}\equiv\pm(\pi-\abs\rho)$ and~$\alpha^+_3=0$
    are pairwise different, and
    \begin{equation}\label{eq:PosAngles}\kern1em
      -\Sign\rho\,\#\bigl\{\,j
      \bigm|\alpha_j^+\in\{\pi-\abs\rho,\pi\}\,\bigr\}
      -2\Sign\rho\,\#\bigl\{\,j
      \bigm|\alpha_j^+\in(\pi-\abs\rho,\pi)\,\bigr\}
      =-\Sign\rho\;.
    \end{equation}
  \end{itemize}
  Setting~$\Sign\rho=0$ if~$\rho=0$ unifies both cases.
  
  Using Definition \ref{def2.6}
  and~\eqref{eq:PosAngles}, we rewrite equation~\eqref{eq:mH3}
  in Proposition~\ref{Mprop} as
  \begin{equation*}
    m_{H^3(\X;\bbc)}(\lagr^3_{B_+},\lagr^3_{B_-})
    =-16\frac\rho\pi+m_\rho(\latc;N_+,N_-)\;.
  \end{equation*}
  We insert this into the gluing formula in Theorem~\ref{Thm2.5}
  and obtain our claim.
\end{proof}

\section{The Spinor Eta Invariant}\label{Kap3}
To compute the $\eta$-invariant~$\eta(D_M)$ of the spin Dirac operator,
we proceed analogously.
Both the $G_2$-metric~$\bar g_\ell$ and the Calabi-Yau metric
on the asymptotically cylindrical manifolds~$V_\pm$ are Ricci flat,
and so one expects that harmonic spinors are parallel.
Hence, we might hope that the gluing contribution to the spinorial
$\eta$-invariant has an interpretation in terms of these parallel spinors.

However, some care is needed, because we have
to work with the gluing metric~$g^{TM}_\ell$ on~$M_\ell$
of Section~\ref{Sect4.1}, which is not a $G_2$-metric in general.
Neither are the metrics induced on~$\widetilde M_{\pm,\ell}$ products
of Calabi-Yau metrics with the metric on the exterior circles~$S^1_{\xi_\pm}$.
We therefore also modify the Dirac operator on~$M_\ell$ in such a way that
its kernel is one-dimensional as for a true $G_2$-Dirac operator,
and under suitable boundary conditions, the kernels on~$M_{\pm,\ell}$
are two-dimensional.

We can then give a tractable gluing formula for the $\eta$-invariants
of the modified Dirac operators. And because our construction
rules out spectral flow, we get a good approximation of the
$\eta$-invariant for the $G_2$-metric~$\bar g_\ell$ that we need to
compute~$\bar\nu(M_\ell,\bar g_\ell)$ according to Definition~\ref{MainDef}.

\subsection{A Deformation of the Spin Dirac Operator}\label{Abs3.2}
We will construct a family of metric~$g_{\ell,w}$ on~$M_\ell$
depending on a parameter~$w\in[0,1]$
that interpolates between the $G_2$-metric~$\bar g_\ell$
from Theorem~\ref{Thm2.1} for~$w=0$ and the gluing metric~$g_\ell$
of Section~\ref{Sect4.1} for~$w\in\bigl[\frac 12,1\bigr]$.
We then also modify the Dirac operator for~$w>0$ such that~$D_{\ell,w}$ has
a one-dimensional kernel for all~$w\in[0,1]$,
and such that for~$w=1$, the kernel on~$M_{\pm,\ell+1}$
becomes two-dimensional under weak APS boundary conditions, see below.

For~$w\in\bigl[0,\frac12\bigr]$, we interpolate between~$\bar g_\ell$
and~$g_\ell$ using a smooth cutoff function in the parameter~$w$.
By Remark~\ref{Rem2.2}~\ref{2.g.5}, we conclude that
\begin{equation*}
  \norm{g_{\ell,w}-\bar g_\ell}_{C^k}=O(e^{-c\ell})
  \qquad\text{and}\qquad
  \norm{g_{\ell,w}-g_\ell}_{C^k}=O(e^{-c\ell})
\end{equation*}
uniformly in~$w\in[0,1]$,
where~$c$ still is the constant from Theorem~\ref{Thm2.1}.

If we regard the metrics above as bundle
isomorphisms~$g_{\ell,w}\colon TM\to T^*M$,
then~$\bar g_\ell^{-1}\circ g_{\ell,w}$ is a $\bar g_\ell$-selfadjoint
automorphism close to the identity.
It has a unique positive square-root~$\Phi_{\ell,w}\in\Aut TM$
such that~$\Phi_{\ell,w}^*\bar g_\ell=g_{\ell,w}$
and
\begin{equation*}
  \norm{\id_{TM}-\Phi}_{C^k}=O\bigl(e^{-c\ell}\bigr)
\end{equation*}
for all~$w\in[0,1]$.

Let~$SM_\ell\to M_\ell$ denote the spinor bundle
for the metric~$\bar g_\ell=g_{\ell,0}$.
If~$\bar c\colon TM\otimes SM\to SM$ denotes Clifford multiplication
on~$(M_\ell,\bar g_\ell)$,
then~$c=\bar c\circ(\Phi_{\ell,w}\otimes\id_{SM})$ is a Clifford multiplication
with respect to the metric~$g_{\ell,w}$.
Hence, we can identify the spinor bundles for~$(M,g_{\ell,w})$ with~$SM$
as bundles with metrics for all~$w\in[0,1]$.
Then the parallel spinor~$s_\ell$ on~$(M,\bar g_\ell)$
can be regarded as a unit section
of~$S_{\ell,w}M$ for all~$w\in[0,1]$.

Recall that by Remark~\ref{Rem2.2}~\ref{2.g.2}, \ref{2.g.3},
the $G_2$-structure~$\phy_\ell$ of~\eqref{2.6a}, \eqref{2.7} is parallel
with respect to the gluing metric~$g_\ell$
on the subset~$M\setminus(X\times([-2,-1]\cup[1,2]))\subset M_\ell$.
We want to replace~$s_\ell$ by a unit spinor~$s_{\ell,w}$ that defines~$\phy_\ell$
over this subset.

\begin{rmk}
  We will need an explicit relation between
  a $G_2$-structure~$\phy$ on a spin $7$-manifold~$M$ and
  its defining spinor~$s$.
  \begin{itemize}
  \item For~$x$, $y$, $z\in T_pM$,
    we have~$\phy_p(x,y,z)=\pm\<c_xc_yc_zs(p),s(p)\>$.
  \item Clifford multiplication with~$\phy_p$ acts on~$S_pM$
    such that~$c(\phy_p)(s(p))=\pm 7s(p)$ and~$c(\phy_p)(v)=\mp v$
    if~$v\in s(p)^\perp\subset S_pM$.
  \end{itemize}
  To check this,
  one can use octonion multiplication~$\Im\bbo\times\bbo\to\bbo$ as an
  explicit model for Clifford multiplication of~$T_pM$ on~$S_pM$. 
  The signs in the formulas above are a matter of convention and
  play no role in the sequel.
\end{rmk}

We define~$\phy_{\ell,w}\in\Omega^3(M_\ell)$ on~$M_\ell$ by interpolating between
the torsion-free $G_2$-structure~$\bar\phy$ of Theorem~\ref{Thm2.1}
for~$w=0$ and the closed $G_2$-structure~$\phy_\ell$
for~$w\in\bigl[\frac12,1\bigr]$, just as we did for~$g_{\ell,w}$.
Because the metric defined by~$\phy_{\ell,w}$ depends in a nonlinear way
on~$\phy_{\ell,w}$, it will not agree with~$g_{\ell,w}$.
Instead, both metrics will differ by an error of the order~$O(e^{-c\ell})$
by Theorem~\ref{Thm2.1}.
We conclude that Clifford multiplication by~$\phy_{\ell,w}$ will have
an eigenspace of real dimension~$1$ for an eigenvalue close to~$\pm 7$.
We project~$s_\ell$ to it and normalise to define~$s_{\ell,w}$.
Then~$s_{\ell,w}$ will be the defining spinor for~$\phy_\ell$
on the subset~$M\setminus(X\times([-2,-1]\cup[1,2]))$ where~$\phy_{\ell,w}$
is torsion-free.

We let~$\nabla^{SM,\ell,w}$ denote the connection on~$SM$ induced from
the Levi-Civita connection on~$(M,g_{\ell,w})$ for each~$w$.
From the construction of~$g_{\ell,w}$ and Theorem~\ref{Thm2.1},
we conclude that
\begin{equation*}
  \bigl\|\nabla^{SM,\ell,w}s_{\ell,w}\bigr\|_{C^k}=O\bigl(e^{-c\ell}\bigr)
\end{equation*}
for some constant~$c>0$.

Recall that by Remark~\ref{Rem2.2}~\ref{2.g.1},
the metric~$g_\ell$ restricted to~$M_{\pm,\ell+1}$ descends from a product metric
on~$\widetilde M_{\pm,\ell+1}=V_{\pm,\ell+1}\times S^1_{\xi_\pm}$.
In particular, the unit tangent field~$\del_{\anglex_\pm}$ to the exterior
circle~$S^1_{\xi_\pm}$ induces a parallel vector field on~$M_{\pm,\ell+1}$
for~$w\ge\frac12$.
We still denote it by~$\del_{\anglex_\pm}$.
The $3$-form~$\phy_\ell$ of~\eqref{2.6a}, \eqref{2.7}
is of product type and hence parallel in the direction of~$\del_{\anglex_\pm}$.
Because~$s_{\ell,w}$ is a unit spinor that depends only on~$\phy_\ell$
for~$w\ge\frac12$,
we conclude that
\begin{equation*}
  \nabla^{SM,\ell,w}_{\del_{\anglex_\pm}}s_{\ell,w}=0
  \qquad\text{on }M_{\pm,\ell+1}\times\bigl[\tfrac12,1\bigr]\;.
\end{equation*}

Let~$D'_{\ell,w}$ denote the geometric spin Dirac operator of~$M_{\ell,w}$.
By Remark~\ref{Rem2.2}, the metric~$g_\ell$ has local holonomy in~$G_2$
except over~$X\times([-2,-1]\cup[1,2])$.
We have assumed that~$s_{\ell,w}$ defines the $G_2$-structure~$\phy_\ell$
outside~$X\times([-2,-1]\cup[1,2])$, hence
\begin{equation*}
  D'_{\ell,w}s_{\ell,w}|_{M_\ell\setminus X\times([-2,-1]\cup[1,2])}=0
\end{equation*}
for all~$w\ge\frac12$.
Because~$\bar g_\ell=g_{\ell,0}$ is a $G_2$-metric,
we also have~$D'_{\ell,0}s_{\ell,0}=0$.
Let us write
	$$D'_{\ell,w}s_{\ell,w}=f_{\ell,w}\cdot s_{\ell,w}+r_{\ell,w}\;,$$
where~$f_{\ell,w}\in C^\infty(M)$ and~$r_{\ell,w}\in\Gamma(\Spinor M)$
with~$r_{\ell,w}\perp s_{\ell,w}$ pointwise for all~$w$.
Then~$f_{\ell,w}$ and~$r_{\ell,w}$ vanish for~$w=0$,
and outside~$X\times([-2,-1]\cup[1,2])$ if~$w\in\bigl[\frac12,1\bigr]$.
Now define
\begin{equation}\label{ModifiedDirac1}
  D''_{\ell,w}=D'_{\ell,w}-\<\punkt,s_{\ell,w}\>\,(f_{\ell,w}\cdot s_{\ell,w}+r_{\ell,w})-\<\punkt,r_{\ell,w}\>\,s_{\ell,w}\;.
\end{equation}
Then~$D''_{\ell,w}$ is a self-adjoint differential operator with the symbol
of a Dirac operator, and~$s_{\ell,w}\in\ker(D''_{\ell,w})$ for all~$w$,
and~$D''_{\ell,0}=D'_{\ell,0}$ because~$s_{\ell,0}\in\ker(D'_{\ell,0})$.

We need another deformation for~$w\in\bigl[\frac 12,1\bigr]$.
Note that~$\del_{\anglex_\pm}$ is parallel with respect to~$\nabla^{TM,\ell,w}$
for~$w\ge\frac12$ and~$\pm t\ge -1$ by Remark~\ref{Rem2.2}~\ref{2.g.1}.
For~$w\ge\frac12$, let us decompose
\begin{equation*}
  r_{\ell,w}|_{M_{\pm,\ell}}=f^\pm_{\ell,w}\,c_{\anglex_\pm}s_{\ell,w}+r^\pm_{\ell,w}
\end{equation*}
with~$f^\pm_{\ell,w}=\<r_{\ell,w},c_{\anglex_\pm}s_{\ell,w}\>|_{M_{\pm,\ell}}$,
such that~$r^\pm_{\ell,w}$ is pointwise perpendicular to~$s_{\ell,w}$
and~$c_{\anglex_\pm}s_{\ell,w}$.
Again, both~$f^\pm_{\ell,w}$ and~$r^\pm_{\ell,w}$ are supported
on~$X\times(\pm[1,2])$ and of order~$O(e^{-c\ell})$.
Now, choose a cutoff function~$\rho$ supported in~$\bigl(\frac12,\infty\bigr)$
with~$1-\rho$ supported in~$(-\infty,1)$ and put
\begin{equation}\label{ModifiedDirac2}
  D_{\ell,w}|_{M_\pm}=D''_{\ell,w}|_{M_\pm}
  +\rho(w)\,\Bigl(\<\punkt,c_{\anglex_\pm}s_{\ell,w}\>\,c_{\anglex_\pm}(f_{\ell,w}\,s_{\ell,w}+r^\pm_{\ell,w})
	+\<\punkt,c_{\anglex_\pm}r^\pm_{\ell,w}\>\,c_{\anglex_\pm}s_{\ell,w}\Bigr)\;.
\end{equation}
Again, we obtain a smooth family of selfadjoint differential operators
with the principal symbol of a Dirac operator.
Note that we still have
\begin{equation}\label{3.21}
  D_{\ell,w}s_{\ell,w}=0
\end{equation}
for all~$w$.
On the strictly cylindrical piece~$Z$,
we still have~$D_{\ell,w}=D''_{\ell,w}$ for all~$w\ge\frac12$.

We also have
\begin{equation}\label{3.22}
  D_{\ell,1}|_{M_{\pm,\ell+1}}(c_{\anglex_\pm}s_{\ell,w})=0\;.
\end{equation}
To see this,
note that
\begin{equation}\label{3.22a}
  D'_{\ell,1}|_{M_{\pm,\ell+1}}
  =c_{\anglex_\pm}\,\bigl(\nabla^{SM,w}_{\del_{\anglex_\pm}}+A_{V_\pm,\ell}\bigr)\;,
\end{equation}
where~$A_{V_\pm,\ell}$ can be identified with the geometric Dirac operator
on~$(V_{\pm,\ell+1},g_{V_\pm,\ell})$.
Because~$\nabla^{SM,w}_{\del_{\anglex_\pm}}s_{\ell,w}=0$ and~$A_{V_\pm,\ell}$
anticommutes with~$c_{\anglex_\pm}$, we get
\begin{align*}
  D'_{\ell,1}(c_{\anglex_\pm}s_{\ell,w}|_{M_{\pm,\ell+1}})
  &=-c_{\anglex_\pm}D'_{\ell,1}s_{\ell,w}|_{M_{\pm,\ell+1}}
  =-f_{\ell,w}c_{\anglex_\pm}s_{\ell,w}-c_{\anglex_\pm}r_{\ell,w}\\
  &=-f_{\ell,w}c_{\anglex_\pm}s_{\ell,w}+f^\pm_{\ell,w}s_{\ell,w}-c_{\anglex_\pm}r^\pm_{\ell,w}\;.
\end{align*}
Because~$f^\pm_{\ell,w}=\<c_{\anglex_\pm}s_{\ell,w},r_{\ell,w}\>$,
inserting the above and~\eqref{ModifiedDirac1} into~\eqref{ModifiedDirac2}
gives
\begin{equation*}
  D_{\ell,1}|_{M_{\pm,\ell+1}}(c_{\anglex_\pm}s_{\ell,w})
  =D'_{\ell,1}(c_{\anglex_\pm}s_{\ell,w})
  -\bigl\<c_{\anglex_\pm}s_{\ell,w},r_{\ell,w}\bigr\>\,s_{\ell,w}
  +c_{\anglex_\pm}(f_{\ell,w}\,s_{\ell,w}+r^\pm_{\ell,w})\\
  =0\;.
\end{equation*}
Finally, for each~$k\ge 0$, we have
\begin{equation*}
  \norm{D_{\ell,w}-D_{\ell,0}}_{C^k}=O\bigl(e^{-c\ell}\bigr)\;.
\end{equation*}

\subsection{Spectral Symmetry on \texorpdfstring{$M_\pm$}{M+-}}\label{SpecSymm}
On the manifolds~$(M_{\pm,\ell},g_\ell)$,
we consider the restriction $\dirac_{M_\pm,\ell}=\dirac_{\ell,1}|_{M_{\pm,\ell}}$
with modified APS-boundary conditions.
As in~\cite{KiLe}, we let~$\gamma=c_t$ denote Clifford multiplication
by the vector~$\del_t$ and write
\begin{equation}\label{3.22b}
  \dirac_{\ell,1}|_{t\in[-1,1]}
  =\gamma\,\biggl(\frac\del{\del t}+A\biggr)\;.
\end{equation}
We therefore have to choose Lagrangians~$\lagr_{D_\pm}\subset\ker A$.

\begin{rmk}\label{rem:par}
  We can identify~$A$ with the geometric Dirac operator~$D_X$
  on~$X=\K\times T^2$. In particular, $\dim(\ker A)=4$,
  and~$\ker A$ is the space of parallel spinors on~$X$
  because~$X$ is scalar flat.
  
  Here, we identify~$\ker A$ with the space of parallel spinors
  over~$X\times(-1,1)$.
  Starting from the parallel spinor~$s=s_{\ell,1}|_{X\times(-1,1)}$,
  we claim that~$\ker A$ is spanned~$s$,
  $c_{\anglen_-}s$, $c_{\anglex_-}s$ and~$\gamma s$.
  This follows because~$\del_{\anglen_-}$, $\del_{\anglex_-}$ and~$\del_t$
  span the space of parallel vector fields on~$X\times(-1,1)$,
  and Clifford multiplication gives a parallel isomorphism from
  the tangent bundle to the subbundle of the spinor bundle
  that is perpendicular to~$s$.
\end{rmk}

Let~$\lagr_{D_\pm}\subset\ker A$ be the Lagrangian subspace
spanned by those $A$-harmonic spinors on~$\X$ that extend
to $\dirac_{\ell,1}$-harmonic spinors on~$M_\pm$,
as proposed in~\cite{KiLe}.
By~\eqref{3.21}, \eqref{3.22},  we know that~$s$, $c_{\anglex_\pm}s\in \lagr_{D_\pm}$.
Because~$\dim \lagr_{D_\pm}=2$, we conclude that
\begin{align}
  \begin{split}\label{3.23}
    \lagr_{D_-}&=\Span\{s,c_{\anglex_-}s\}\\ %
    \text{and}\qquad
    \lagr_{D_+}&=\Span\{s,c_{\anglex_+}s\}\;. %
  \end{split}
\end{align}

\begin{prop}\label{prop3.1}
  Assume that~$\Gamma_\pm\cong\Z/k_\pm$ with~$k_\pm\in\{1,2\}$.
  Then
  \begin{equation*}
    \eta_{\APS}\bigl(D_{M_\pm,\ell};\lagr_{D_\pm}\bigr)=0\;.
  \end{equation*}
\end{prop}

We proceed as in the proof of Proposition~\ref{prop2.3}.
Again, in~\cite{GN} we also allow~$k_\pm>2$.
In that case, it is possible
that~$\eta_{\APS}\bigl(D_{M_\pm,\ell};\lagr_{D_\pm}\bigr)\ne0$.

\begin{proof}
Let~$\kappa_-$ be the involution of~$M_-$
introduced in Section~\ref{Abs2.3}.
Because~$\Spinor M_-$ is the pullback of the
spinor bundle~$\Spinor V_-$ on~$V_-$,
we can lift~$\kappa_-$ to the spinor bundle~$\Spinor M_-$ by
	$$\Spinor M_-|_{(v_-,x)}=\Spinor V_-|_x\;\owns\;\sigma
	\longmapsto \bar \kappa_-(\sigma)=c_{\anglex_-}\sigma
	\;\in\;\Spinor V_-|_x=\Spinor M_-|_{(-v_-,x)}\;.$$
Because~$c_{\anglex_-}$ anticommutes with~$A_{V_\pm,1}$ and
the reflection of~$S^1$ anticommutes with~$\frac\del{\del {\anglex_-}}$,
we see from~\eqref{3.22a} that the geometric Dirac operator~$\dirac'_{\ell,1}$
anticommutes with~$\bar \kappa_-$.

To show that~$\bar\kappa_-$ anticommutes with the additional
terms in~\eqref{ModifiedDirac1}, \eqref{ModifiedDirac2},
we note that~$s_{\ell,1}|_{M_-}$ and~$D'_{\ell,1}s_{\ell,1}|_{M_-}$
do not depend on the variable~$\anglex_-$.
Then~$f_{\ell,1}$, $f^-_{\ell,1}$, $r_{\ell,1}$ and~$r^-_{\ell,1}$ are independent of~$\anglex_-$
as well,
so the additional terms in~\eqref{ModifiedDirac1}, \eqref{ModifiedDirac2}
commute with a reflection of~$S^1$.
Because~$c_{\anglex_-}$ is skew-adjoint with~$c_{\anglex_-}^2=-1$,
one checks that~$D_{\ell,1}$ anticommutes with~$\bar\kappa_-$.

On the other hand, $\bar \kappa_-$ also anticommutes with~$\gamma=c_t$,
and hence, $\bar \kappa_-$ commutes with~$A$ by~\eqref{3.22b}.
From~\eqref{3.23} it is clear that~$\bar\kappa_-$ preserves~$\lagr_{D_-}$.
Then~$\bar \kappa_-$ preserves the modified APS-boundary conditions on~$M_-$,
that is, the space of sections of~$\Gamma(SM_-)$ whose restriction to~$\X$
is perpendicular to the space
spanned by~$\lagr_{D_-}$ and the eigenspinors of~$A$
with positive eigenvalues.
Because~$\bar\kappa_-$ anticommutes with~$\dirac_{M_-,\ell}$,
we have established the Proposition.
\end{proof}

\subsection{Small eigenvalues}\label{Sec:SmallEv}
Before we prove a gluing formula for the spinorial Dirac operator,
we show that there are no small eigenvalues except~$0$.
This will be needed both for the gluing formula and for the comparison
of~$\eta(D_{M,\ell,1})$ and~$\eta(D_M)$.
We will first analyse the spectrum of the Dirac operators on~$M_\pm$
under strict APS boundary conditions,
using Cheeger's estimate for the scalar Laplacian.
In a second step,
we will %
prove a lower bound on the first positive Dirac eigenvalue on~$M$.
The estimates in this section still depend on the geometry of an extra
twisted connected sum.

To state these estimates,
we fix~$\ell$, and we replace the cylindrical part~$X\times[-1,1]$ of~$M_\ell$
from Remark~\ref{Rem2.2}~\ref{2.g.3}
by a cylinder of length~$2r$, obtaining~$M_{\ell,r}$.
Write~$Z_r=X\times[-r,r]\subset M_{\ell,r}$ for the new cylindrical part.
Let~$M_{\pm,\ell,r}\subset M_{\ell,r}$ be extensions of~$M_{\pm,\ell}$
such that~$M_{+,\ell,r}\cap M_{-,\ell,r}=Z_r$,
and let~$D_{M_\pm,\ell,r}$ denote the operators induced by~$D_{\ell,1}$
on~$M_{\pm,\ell,r}$.
In particular, the restriction~$D_{Z_r}=D_{M,\ell,r}|_{Z_r}=D_{M_\pm,\ell,r}|_{Z_r}$
is still described by~\eqref{3.22b}.

To treat eigenspinors of~$D_{\ell,1}$ on~$Z_r$, we use separation of variables,
as follows.
In~\eqref{3.22b}, the operator~$A$ anticommutes with~$\gamma$.
This implies that~$\gamma\frac\del{\del t}$ and~$D_{Z_r}$ preserve
each eigenspace of~$A^2$.
We start with nonzero eigenvalues of~$A$, leaving~$\ker A$ for later.
Let~$\mu>0$ be an eigenvalue of~$A$ and let~$h_\mu$
be a normalised $\mu$-eigenspinor of~$A$ on~$X$.
Then~$\gamma h_\mu$ is a normalised $(-\mu)$-eigenspinor.
Assume that~$2\abs\lambda$ is smaller than the smallest
positive eigenvalue of~$A$, in particular, $2\abs\lambda<\mu$.
Let~$t\in[-r,r]$ denote the cylinder coordinate.
Then the $\lambda$-eigenspace of the operator~$D_{Z_r}$
acting on spinors that are linear combinations of~$h_\mu$ and~$\gamma h_\mu$
in each fibre of~$Z_r\to[-r,r]$ is spanned by
\begin{align}\label{eq:HpmrLambdaMu}
  \begin{split}
    H^-_{r,\lambda,\mu}
    &=\Bigl(\bigl(\mu+\sqrt{\mu^2-\lambda^2}\bigr)\,h_\mu
    +\lambda\,\gamma h_\mu\Bigr)\,e^{-(t+r)\sqrt{\mu^2-\lambda^2}}\;,\\
    H^+_{r,\lambda,\mu}
    &=\Bigl(\lambda\,h_\mu
    +\bigl(\mu+\sqrt{\mu^2-\lambda^2}\bigr)\,\gamma h_\mu\Bigr)
    \,e^{(t-r)\sqrt{\mu^2-\lambda^2}}
  \end{split}
\end{align}
because of~\eqref{3.22b},
see~\cite[(3.6), (3.7)]{BuGlu}.

Let~$H=L^2(X;SX)$ denote the space of $L^2$-spinors on~$X$.
We equip it with the symplectic structure induced by~$\gamma=c_t$.
Let~$F^+(A)$ denote the closure of the sum of all positive eigenspaces of~$A$,
then~$F^+(A)$ is an isotropic subspace of~$H$
with annihilator~$F^+(A)\oplus\ker A$.
We also put $F^-(A)=\gamma F^+(A)=F^+(-A)$.

\begin{prop}\label{prop:nonres}
  There exist constants~$c>0$ and~$r_0>0$ such that
  for all~$\ell\gg 1$ sufficiently large, all~$r\ge r_0$
  and~$\abs\lambda<\frac c{\ell+r}$,
  there exists no $\lambda$-eigenspinor~$\psi_\pm$ of~$D_{M_\pm,\ell,r}$
  with~$\psi_\pm|_{\del M_{\pm,\ell,r}}\in F^\mp(A)$.
\end{prop}

In other words, the operator~$D_{M_\pm,\ell,r}$ has no small eigenvalues
under strict APS boundary conditions.
In particular, it has trivial kernel.
Such operators are called nonresonant, see~\mbox{\cite[Definition~4.8]{Nico}.}
Note that~$c$ and~$r_0$ do not depend on~$\ell$.

\begin{proof}
  By symmetry, it suffices to consider~$D_{M_-,\ell,r}$.
  Let~$\psi_-$ be a normalised $\lambda$-eigenspinor of~$D_{M_-,\ell,r}$
  with~$\psi_-|_{\del M_{-,\ell,r}}\in F^+(A)$.
  Let~$t\in[-r,r]$ denote the cylinder coordinate.
  We assume that~$2\abs\lambda$ is smaller than the smallest positive
  eigenvalue~$\mu_0$ of~$A$.
  By~\eqref{3.22b} and~\eqref{eq:HpmrLambdaMu}, we may write
  \begin{equation*}
    \psi_-|_{Z_r}
    =\sum_{\Spec A\owns\mu>0}a_\mu\Biggl(H^-_{r,\lambda,\mu}
    -\frac{\lambda\,e^{-2r\sqrt{\mu^2-\lambda^2}}}{\mu+\sqrt{\mu^2-\lambda^2}}
    \,H^+_{r,\lambda,\mu}\Biggr)\;,
  \end{equation*}
  where~$a_\mu\in\R$.
  The coefficient of~$H^+_{r,\lambda,\mu}$ has been chosen such that
  along~$\del M_{-,\ell,r}=X\times\{r\}$,
  the component in~$F^-(A)$ vanishes.
  If~$r\gg 1$ is sufficiently large, it is not hard to check that
  \begin{equation}\label{eq:psimEst1}
    \norm{\psi_-}_{L^2(X\times\{t\})}
    =O\Bigl(e^{\sqrt{\mu_0^2-\lambda^2}\,(1-r-t)}\Bigr)\;.
  \end{equation}

  By equation~\eqref{3.22b}
  and partial integration,
  \begin{align}
    \begin{split}\label{eq:PartIntSL}
      \lambda^2\norm{\psi_-}_{L^2}^2
      &=\norm{D_{M_-,\ell,r}\psi_-}_{L^2}^2\\
      &=\norm{\nabla\psi_-}_{L^2}^2
      +\<D_{M_-,\ell,r}^2-\nabla^*\nabla\psi_-,\psi_-\>_{L^2}
      -\<\gamma D_{M_-,\ell,r}\psi_-+\del_t\psi_-,\psi_-\>_{L^2(\del Z_r)}\\
      &=\norm{\nabla\psi_-}_{L^2}^2+\<\mathcal R_\ell\psi_-,\psi_-\>_{L^2}
      +\<A\psi_-,\psi_-\>_{L^2(X\times\{r\})}\;.
    \end{split}
  \end{align}
  Here, the term~$\mathcal R_\ell=D_{\ell,1}^2-\nabla^*\nabla$
  is of order~$O(e^{-\ell/C})$
  for some~$C>0$ by our construction~\eqref{ModifiedDirac1},
  \eqref{ModifiedDirac2} of~$D_{M_\pm,\ell,r}$
  and the Schr\"odinger-Lichnerowicz formula for the unperturbed Dirac operator,
  using that~$g_\ell$ has small scalar curvature.
  Because we assumed that~$\psi_-|_{X\times\{r\}}\in F^+$,
  we have~$\<A\psi_-,\psi_-\>_{L^2(X\times\{r\})}\ge 0$, and hence
  \begin{equation}\label{eq:psimEst2}
    \norm{\nabla\psi_-}_{L^2}^2
    \le\lambda^2\norm{\psi_-}_{L^2}^2+O\bigl(e^{-\ell/C}\bigr)\;.
  \end{equation}
  
  Choose a smooth cutoff function~$\rho\colon\R\to[0,1]$
  with~$\rho|_{(-\infty,0)}\equiv 0$ and~$\rho|_{(1,\infty)}\equiv 1$.
  We consider the function~$f=\rho(r-t)\norm{\psi_-}$
  with~$f|_{\del M_{-,\ell,r}}=0$.
  Using the classical pointwise
  estimate~$\bigl\|d\norm\psi\bigr\|\le\norm{\nabla\psi}$
  from~\cite[(2.2)]{HSU} together with~\eqref{eq:psimEst1}, \eqref{eq:psimEst2},
  we see that
  \begin{equation*}
    \norm{df}_{L^2}\le\norm f_{L^2}
    \,\Bigl(\lambda^2+O\bigl(e^{-\ell/C}\bigr)+O\bigl(e^{-r/C}\bigr)\Bigr)
  \end{equation*}
  for some constant~$C>0$.
  This gives an upper bound for the first Dirichlet eigenvalue~$\lambda_1$
  of the scalar Laplacian on~$M_{-,\ell,r}$.
  On the other hand,
  Cheeger's estimate~\cite{Cheeger} says that
  \begin{equation*}
    \frac{\vol(X)^2}{4\vol(M_{-,\ell,r})^2}\le\lambda_1\;.
  \end{equation*}
  Because the volume of~$M_{-,\ell,r}$ is approximately linear in~$\ell+r$,
  whereas the boundary~$X$ has constant volume,
  there exists a constant~$C'>0$ such that
  \begin{equation*}
    \frac{C'}{(\ell+r)^2}\le\frac{\vol(X)^2}{4\vol(M_{-,\ell,r})^2}\le\lambda_1
    \le C\Bigl(\lambda^2+O\bigl(e^{-\ell/C}\bigr)+O\bigl(e^{-r/C}\bigr)\Bigr)\;.
    \qedhere
  \end{equation*}
\end{proof}

For each~$h_0\in\ker A$, we have a $\lambda$-eigenspinor
of~$D_{Z_r}$ of the form~$e^{-t\lambda\gamma}\,h_0$.
Recall that~$\ker A$ is
spanned by~$s$, $c_{u_-}s$, $c_{u_+}s$ and~$\gamma s$,
where~$\gamma=c_t$, see Remark~\ref{rem:par}.
Because~$\gamma c_{v_-}s$ is perpendicular to~$s$, $c_{v_-}s$ and~$c_ts$
by the Clifford relation, we conclude that~$\gamma c_{v_-}s=\pm c_{u_-}s$.
The actual sign is a matter of convention and plays no major role
in the following.
Because the coordinate transformation in~\eqref{2.5}
is orientation reversing, we can deduce that~$\gamma c_{v_+}s=\mp c_{u_+}s$,
with the opposite sign as above.
In particular, we have
\begin{equation}\label{eq:cvpms}
  c_{v_-}s=e^{\mp\thet\gamma}\,c_{v_+}s
  \;.
\end{equation}

\begin{prop}\label{prop:smallev}
  There exists a constant~$c>0$ such that for all~$\ell\gg 1$
  sufficiently large and all~$r\ge\ell$,
  the operator~$D_{M,\ell,r}$ has no $\lambda$-eigenspace
  with~$\abs\lambda<\frac c{\ell+r}$ apart from its kernel,
  which is spanned by~$s$.
\end{prop}

Clearly, this statement fails %
if we allow gluing angles~$\thet\in\pi\Z$,
because then~$c_{v_-}s=c_{v_+}s$ on~$Z_r$,
giving rise to an additional very small eigenvalue.
Recall that we have excluded~$\thet\in\pi\Z$ because that would result in
``untwisted connected sums'' with infinite fundamental group,
which is impossible if the holonomy group is isomorphic to~$G_2$.

\begin{proof}
  It is clear by~\eqref{3.21} that~$s\in\ker D_{M,\ell,r}$.
  Assume that~$\psi$ is a $\lambda$-eigenspinor of~$D_{M,\ell,r}$
  with~$\<\psi,s\>_{L^2}=0$, $\norm\psi_{L^2}=1$,
  and~$\abs\lambda<\frac c{\ell+r}$ for some constant~$c$ that
  we will determine later.
  We will construct spinors~$\psi_\pm$ on~$M_{\pm,\ell,r}$
  that vanish at the boundary
  and such that~$\norm{D_{M_\pm,\ell,r}\psi_\pm}$ is small,
  contradicting Proposition~\ref{prop:nonres}.
  Throughout the proof, $c>0$ will denote the constant
  in the Proposition,
  which may differ from the one in Proposition~\ref{prop:nonres}.
  On the other hand, $C>0$ may change from line to line.

  Using the eigenspinors of~\eqref{eq:HpmrLambdaMu},
  we may decompose
  \begin{equation}\label{eqn:PsiDecomp}
    \psi|_{Z_r}=\psi_0
    +\sum_{\Spec A\owns\mu> 0}\bigl(a_\mu\,H^-_{r,\lambda,\mu}
    +b_\mu\,H^+_{r,\lambda,\mu}\bigr)\;,
  \end{equation}
  where~$\psi_0|_{X\times\{t\}}\in\ker A$ for each~$t\in[-r,r]$,
  and~$a_\mu$, $b_\mu\in\R$.
  Then there exist real numbers~$a$, $b$, $e$, $f$ such that
  \begin{equation*}
    h_0=\psi_0|_{X\times\{r\}}
    =(a+b\gamma)s|_{X\times\{r\}}
    +(e+f\gamma)c_{v_-}s|_{X\times\{r\}}\;.
  \end{equation*}
  Because~$D_{Z_r}\psi_0=\lambda\psi_0$, we have
  \begin{equation*}
    \psi_0=e^{(r-t)\lambda\gamma}\,h_0\;.
  \end{equation*}
  Using~\eqref{eq:cvpms}, we conclude that
  \begin{equation}\label{eq:psipmr}
    \psi_0|_{X\times\{-r\}}
    =e^{2r\lambda\gamma}(a+b\gamma)s|_{X\times\{-r\}}
    +e^{(2r\lambda\mp\thet)\gamma}(e+f\gamma)c_{v_+}s|_{X\times\{-r\}}\;.
  \end{equation}

  Because the parallel spinor~$s$ has pointwise length~$1$,
  we have~$\norm s_{L^2(M_{\pm,\ell,r})}=\sqrt{\vol(M_{\pm,\ell,r})}$,
  which is of the order of~$\sqrt{\ell+r}$.
  On the other hand, $\norm s_{L^2(X)}=\sqrt{\vol X}$
  is independent of~$\ell$ and~$r$.
  Recall that for any Dirac operator~$D$ and two spinors~$v$, $w$,
  the function~$\<Dv,w\>-\<v,Dw\>$ is the divergence of
  the vector field dual to the 1-form~$\<c_{\punkt}v,w\>$.
  By our construction of~$D_{\ell,1}$ in~\eqref{ModifiedDirac1},
  \eqref{ModifiedDirac2}, this still holds for our modified Dirac operator.
  By partial integration, we estimate
  \begin{equation}\label{eq:kerAest1}
    \abs b=\frac{\bigl|\<\psi,\gamma s\>_{L^2(X\times\{r\})}\bigr|}{\norm s^2_{L^2(X)}}
     =\frac{\bigl|\<D_{M_-,\ell,r}\psi,s\>_{L^2(M_{-,\ell,r})}\bigr|}
    {\norm s^2_{L^2(X)}}
    \le\abs\lambda\,\frac{\sqrt{\vol(M_{-,\ell,r})}}{\vol(X)}
    <\frac{Cc}{\sqrt{\ell+r}}\;,
  \end{equation}
  where~$c$ is the constant in the Proposition and~$C>0$
  depends on the volumes of~$M_{\pm,\ell,r}$ and~$X$.
  Using~$c_{v_-}s\in\ker D_{M_-,\ell,r}$ instead of~$s$,
  we get a similar estimate for~$\abs f$.
  
  Using~\eqref{eq:psipmr} and~$c_{v_+}s\in\ker D_{M_+,\ell,r}$,
  we also find that
  \begin{equation*}
    \abs{\sin(2r\lambda\mp\thet)\,e+\cos(2r\lambda\mp\thet)\,f}
    =\frac{\bigl|\<D_{M_+,\ell,r}\psi,c_{v_+}s\>_{L^2(M_{+,\ell,r})}\bigr|}
    {\norm{c_{v_+}s}^2_{L^2(X)}}
    <\frac{Cc}{\sqrt{\ell+r}}\;.
  \end{equation*}
  Note that~$\thet\notin\pi\Z$ by assumption.
  If we choose~$c>0$ small enough, we can make sure
  that~$\abs{\sin(2r\lambda\mp\thet)}$ is bounded away from~$0$
  for all~$\lambda$ with~$\abs\lambda<\frac c{\ell+r}$.
  This way, we also get~$\abs e<Cc/\sqrt{\ell+r}$.
  However, we cannot expect a similar estimate for~$\abs a$
  simply because~$s\in\ker D_{M,\ell,r}$ and we have not used
  the condition~$\<\psi,s\>_{L^2(M)}=0$ yet.
  To summarize,
  if we choose~$c>0$ small enough, then there exists~$C>0$ such that
  \begin{equation}\label{eq:abefEst}
    \begin{aligned}
      \max\{\abs b,\abs e,\abs f\}&<\frac{Cc}{\sqrt{\ell+r}}\\
      \text{and}\qquad
      \abs a=\frac{\bigl|\<\psi,e^{(r-t)\lambda\gamma}s\>_{L^2(Z_r)}\bigr|}
           {\norm s^2_{L^2(Z_r)}}
      &\le\frac{\norm\psi_{L^2}}{\norm s_{L^2}}=\frac1{\sqrt{\vol(Z_r)}}
      <\frac C{\sqrt{r}}\;.
    \end{aligned}
  \end{equation}
  
  Choose a smooth cutoff function~$\rho\colon\R\to[0,1]$
  with~$\rho|_{(-\infty,0)}\equiv 0$ and~$\rho|_{(1,\infty)}\equiv 1$.
  Similar as in~\cite[Section~4.6]{BuGlu},
  starting from the decomposition~\eqref{eqn:PsiDecomp},
  we define
  \begin{align*}
    \psi_\pm|_{M_{\pm,\ell,r}\setminus Z_r}
    &=\psi|_{M_{\pm,\ell,r}\setminus Z_r}-as\;,\\
    \psi_-|_{Z_r}
    &=\rho\Bigl(\frac{r-t}{2r}\Bigr)\,(\psi_0-as)\\
    &\qquad
    +\sum_{\Spec A\owns\mu>0}\Bigl(a_\mu\rho(r-t)H^-_{r,\lambda,\mu}
    +b_\mu\bigl(1-\rho(r+t)\bigr)H^+_{r,\lambda,\mu}\Bigr)\;,\\
    \text{and}\qquad
    \psi_+|_{Z_r}
    &=\biggl(1-\rho\Bigl(\frac{r-t}{2r}\Bigr)\biggr)\,(\psi_0-as)\\
    &\qquad
    +\sum_{\Spec A\owns\mu>0}\Bigl(a_\mu\bigl(1-\rho(r-t)\bigr)H^-_{r,\lambda,\mu}
    +b_\mu\rho(r+t)H^+_{r,\lambda,\mu}\Bigr)\;.
  \end{align*}
  In particular,
  the spinors~$\psi_\pm$ are smooth and vanish at the boundary
  of~$M_{\pm,\ell,r}$.
  If we extend~$\psi_\pm$ by~$0$ to all of~$M$,
  then~$\psi-as=\psi_++\psi_-$.
  Because~$\psi$ is normalised and~$\<\psi,s\>_{L^2(M)}=0$ by assumption,
  we may conclude that
  \begin{equation}\label{eqn:wlog}
    \max\bigl\{\norm{\psi_-}_{L^2},\norm{\psi_+}_{L^2}\bigr\}
    \ge\frac12\;.
  \end{equation}

  Because~$\psi$ is a $\lambda$-eigenspinor and~$s$ is parallel,
  we get
  \begin{multline}\label{eq:DpsiEst}
    \norm{D_{M_\pm,\ell,r}\psi_\pm}_{L^2(M_{\pm,\ell,r})}
    \le\abs\lambda\norm{\psi}_{L^2(M_{\pm,\ell,r})}\\
    +\frac 1{2r}\,\norm{\rho'\Bigl(\frac{r-t}{2r}\Bigr)(\psi_0-as)}_{L^2(Z_r)}
    +C\sum_{\Spec A\owns\mu>0}e^{(1-2r)\sqrt{\mu^2-\lambda^2}}(\abs{a_\mu}+\abs{b_\mu})\;.
  \end{multline}
  The contribution of the nonzero eigenvalues of~$A$ on the cylinder
  is of order~$O(e^{-r/C})$.
  For the kernel, we have the pointwise estimate
  \begin{align*}
    \norm{\psi_0-as}^2
    &=\bigl\|\bigl(e^{(r-t)\lambda\gamma}(a+b\gamma)-a\bigr)s
      +e^{(r-t)\lambda\gamma}(e+f\gamma)c_{v_-}s\bigr\|^2\\
    &=\vol(X)\,\Bigl(2a^2\,\bigl(1-\cos((r-t)\lambda)\bigr)
      -2ab\,\sin((r-t)\lambda)+b^2+e^2+f^2\Bigr)\\
    &<\frac {Cc^2}{r}
  \end{align*}
  for some constant~$C>0$
  by~\eqref{eq:abefEst} and our assumption on~$\lambda$.
  We can now control the second term on the right hand side
  of~\eqref{eq:DpsiEst} and obtain
  \begin{equation*}
    \norm{D_{M_\pm,\ell,r}\psi_\pm}_{L^2(M_{\pm,\ell,r})}
    \le\abs\lambda\norm{\psi}_{L^2(M_{\pm,\ell,r})}
    +\frac{Cc}r+O\bigl(e^{-r/C}\bigr)\;.
  \end{equation*}

  By~\eqref{eqn:wlog},
  we may assume that~$\norm{\psi_-}\ge\frac12$.
  We estimate the Rayleigh quotient of~$\psi_-$ by
  \begin{equation*}
    \frac{\norm{D_{M_-,\ell,r}\psi_-}_{L^2(M_{-,\ell,r})}}
         {\norm{\psi_-}_{L^2(M_{-,\ell,r})}}
    \le\frac{Cc}r+O\bigl(e^{-r/C}\bigr)\;.
  \end{equation*}
  But this implies that for~$c>0$ sufficiently small and~$r\ge\ell$,
  even under Dirichlet boundary conditions,
  the operator~$D_{M_-,\ell,r}$ has an eigenvalue
  that is smaller than the smallest eigenvalue
  allowed by Proposition~\ref{prop:nonres}.
  Hence, there cannot be a $\lambda$-eigenspinor of~$D_{M,\ell,r}$
  with~$\abs\lambda<\frac c{\ell+r}$ and~$\<\psi,s\>_{L^2}=0$.
\end{proof}

\subsection{A Gluing Formula}
In this section,
we derive a formula similar to~\cite[(8.32)]{KiLe} for the modified spin Dirac
operator~$D_{\ell,1}$ of an extra-twisted connected sum.
With the construction of~$D_{M,\ell,r}$ and the estimates from the previous
section, we can follow either Bunke~\cite{BuGlu} or Kirk-Lesch~\cite{KiLe}.
For simplicity, we have chosen the former.

We still consider the manifolds~$M_{\ell,r}$ and~$M_{\pm,\ell,r}$ introduced
in Section~\ref{Sec:SmallEv}.
Let~$Z'_r\cong Z_r$ be an additional copy of the cylindrical part.
Then let~$\D_1=D_{M,\ell,r}\oplus D_{Z'_r}$ denote the operator induced
by~$D_{M,\ell,r}$ on the disjoint union~$M_{\ell,r}\sqcup Z'_r$.
We use APS boundary conditions modified by the subspaces~$\lagr_{D_\pm}$
of~\eqref{3.23} at~$\{\mp r\}\times X\subset\del Z'_r$.
Recall the Maslov angle~$m_H(\lagr_{D_+},\lagr_{D_-})$ from Definition~\ref{BunkeDef}.
By a theorem of Lesch-Wojciechowski~\cite[Thm~2.1]{LW},
\begin{equation*}
  \eta_{\APS}(\D_1;\lagr_{D_-}\oplus \lagr_{D_+})
  =\eta(D_{M,\ell,r})-m_{\ker A}(\lagr_{D_+},\lagr_{D_-})\;.
\end{equation*}

Let~$\chi\colon[-1,1]\to[0,1]$ be a smooth, nowhere increasing function
with~$\supp\chi\subset\bigl[-1,\frac 12\bigr]$,
and such that $\chi^2(-t)+\chi^2(t)=1$.
In particular, $\supp(1-\chi)\subset\bigl[-\frac12,1\bigr]$.
We now regard~$t$ as the cylinder coordinate
on~$Z_r\subset M_{\ell,r}$ and~$Z'_r$,
and we extend~$\chi\bigl(\frac tr\bigr)\colon Z_r\to[0,1]$
by~$1$ on~$M_{\ell,r}^-\setminus Z_r$,
and by~$0$ on~$M_{\ell,r}^+\setminus Z_r$,
and we similarly extend~$\chi\bigl(-\frac tr\bigr)$.
Abusing notation,
we define an $L^2$-unitary transformation
\begin{equation*}
  U=
  \begin{pmatrix}
    \chi\bigl(\tfrac tr\bigr)&-\chi\bigl(-\tfrac tr\bigr)\\
    \chi\bigl(-\tfrac tr\bigr)&\chi\bigl(\tfrac tr\bigr)
  \end{pmatrix}
  \colon\Gamma(SM_{\ell,r})\oplus\Gamma(SZ'_r)
  \longrightarrow\Gamma(SM_{-,\ell,r})\oplus\Gamma(SM_{+,\ell,r})
\end{equation*}
as in~\cite[Section~3.2]{BuGlu},
and consider the operator
\begin{equation*}
  \D_0=U^*\bigl(D_{M_-,\ell,r}\oplus D_{M_+\ell,r}\bigr)U\;.
\end{equation*}
Put~$\trang(t)=\chi'(t)\chi(-t)+\chi'(-t)\chi(t)$.
Then~$\trang(t)\le 0$ because~$\chi$ is nonincreasing.
Define
\begin{equation}\label{eq:Gdef}
  G=\D_1-\D_0=\frac 1r
  \begin{pmatrix}
    0&-\trang\bigl(\tfrac tr\bigr)\,\gamma\\
    \trang\bigl(\tfrac tr\bigr)\,\gamma&0
  \end{pmatrix}\;.
\end{equation}
Then~$G$ is a selfadjoint operator
on~$\Gamma(SM_{\ell,r})\oplus\Gamma(SZ'_r)$.
It is of order~$0$, but not a local operator because it acts between
the spinor bundles on the two copies of~$Z_r$.
For~$z\in[0,1]$, define
\begin{equation}\label{eq:Dzdef}
  \D_z=\D_0+zG=D_{M,\ell,r}\oplus D_{Z'_r}-(1-z)G\;.
\end{equation}

\begin{thm}\label{thm3.4}
  For~$\ell\gg 1$ sufficiently large and~$r\ge\ell$, 
  on~$M_{\ell,r}=M_{-,\ell,r}\cup_{Z_r} M_{+,\ell,r}$
  we have
  \begin{equation*}
    \eta(D_{M,\ell,r})=\eta_{\APS}(D_{M_+\ell,r};\lagr_{D_+})
	+\eta_{\APS}(D_{M_-,\ell,r};\lagr_{D_-})+m_{\ker A}(\lagr_{D_+},\lagr_{D_-})\;.
  \end{equation*}
\end{thm}
  
\begin{proof}[Proof of Theorem~\ref{thm3.4}]
  By~\cite[Theorem~1.9]{BuGlu} and the remark after~\cite[Theorem~1.17]{BuGlu},
  it suffices to show that
  \begin{equation}\label{3.8}
    2\SF\bigl((\D_z)_{z\in[0,1]}\bigr)=\dim\ker \D_0-\dim\ker \D_1\;.
  \end{equation}
  Here, the spectral flow~$\SF\bigl((\D_z)_{z\in[0,1]}\bigr)$
  is defined as the net number of eigenvalues
  crossing the line~$\eps>0$ from below for~$\eps$ very small.

  We start by proving an analogue of~\eqref{eq:PartIntSL}
  for the operator~$\D_z$.
  We introduce the operator
  \begin{align}
    \begin{split}\label{eq:nablaz}
      \nabla^z&\colon\Gamma(SM_{\ell,r})\oplus\Gamma(SZ'_r)
      \to\Omega^1(M_{\ell,r};SM_{\ell,r})\oplus\Omega^1(Z'_r;SZ'_r)\\
      \text{with}\qquad
      \nabla^z
      &=\nabla+(1-z)\,U^*\,dU
      =\nabla+\frac{1-z}r
      \begin{pmatrix}
        0&\trang\bigl(\frac tr\bigr)\\-\trang\bigl(\frac tr\bigr)&0
      \end{pmatrix}\,dt
    \end{split}
  \end{align}
  Then~$\nabla^1$ is the spinor connection
  on~$\Gamma(SM_{\ell,r})\oplus\Gamma(SZ'_r)$,
  and~$\nabla^0=U^*\circ\nabla\circ U$ is isomorphic to the
  connection on~$\Gamma(SM_{+,\ell,r})\oplus\Gamma(SM_{i,\ell,r})$.
  For~$z<1$, the operator~$\nabla^z$ is nonlocal,
  as it mixes spinors on the two copies of~$Z_r$.
  However, its restriction to~$Z_r\sqcup Z'_r$
  becomes a metric connection if we regard~$SZ_r\oplus SZ'_r$
  as a vector bundle over a single copy of~$Z_r$.
  
  By~\eqref{eq:Gdef} and~\eqref{eq:nablaz}, we have
  \begin{equation*}
    G^2=(\D_1-\D_0)^2=(\nabla^1-\nabla^0)^*(\nabla^1-\nabla^0)
  \end{equation*}
  Then
  \begin{align*}
    \D_z^2
    &=z\,\D_1^2+(1-z)\,\D_0^2-z(1-z)\,(\D_1-\D_0)^2\\
    &=z\,\nabla^{1*}\nabla^1+(1-z)\,\nabla^{0*}\nabla^0
    -z(1-z)\,(\nabla^1-\nabla^0)^*(\nabla^1-\nabla^0)+\mathcal R_\ell\\
    &=\nabla^{z*}\nabla^z+\mathcal R_\ell\;,
  \end{align*}
  where~$\mathcal R_\ell=D_{\ell,1}^2-\nabla^*\nabla$ as in~\eqref{eq:PartIntSL}.
  Here we have used that~$\mathcal R$ and~$\trang$ have disjoint supports.

  Let now~$\psi$ be a normalised $\lambda$-eigenspinor of the operator~$\D_z$.
  We argue as in~\eqref{eq:PartIntSL} to obtain
  \begin{equation*}
    \lambda^2\norm{\psi}^2_{L^2}
    =\norm{\nabla^z\psi}^2_{L^2}+\<\mathcal R\psi,\psi\>_{L^2}
    +\<A\psi,\psi\>_{L^2(X\times\{r\})}-\<A\psi,\psi\>_{L^2(X\times\{-r\})}\;.
  \end{equation*}
  Let us assume that~$\psi$ satisfied the modified APS-boundary conditions.
  In particular, the two boundary terms are nonnegative.
  The Schr\"odinger-Lichnerowicz formula on~$X$ implies
  that
  \begin{equation*}
    \norm{\nabla^z\psi}^2_{L^2(Z_r\sqcup Z'_r)}=\norm{A\psi}^2_{L^2(Z_r\sqcup Z'_r)}
    +\norm{\nabla^z_{\del_t}\psi}^2_{L^2(Z_r\sqcup Z'_r)}\;.
  \end{equation*}
  Because~$\<\mathcal R\psi,\psi\>_{L^2}=O(e^{-\ell/C})$
  by~\eqref{ModifiedDirac1} and \eqref{ModifiedDirac2},
  we conclude that if~$\abs\lambda$ is sufficiently small,
  then~$\psi|_{Z_r\sqcup Z'_r}$ must have a nonvanishing
  component~$\psi_0\in C^\infty([-r,r],\ker A\oplus\ker A)
  \subset{\Gamma(SZ_r)\oplus\Gamma(SZ'_r)}$.

  First note that because~$\chi(t)^2+\chi(-t)^2=1$
  and~$\trang(t)=\chi'(t)\chi(-t)+\chi(t)\chi'(-t)$,
  we have
  \begin{equation*}
    \frac\del{\del t}\arcsin\bigl(\chi(t)\bigr)
    =\frac{\chi'(t)}{\chi(-t)}
    =\trang(t)\;.
  \end{equation*}
  For a given~$\alpha_0\in\R$, let us abbreviate
  \begin{equation*}
    \alpha(t)=(1-z)\,\arcsin\Bigl(\chi\Bigl(\frac tr\Bigr)\Bigr)+\alpha_0\;.
  \end{equation*}
  By~\eqref{3.22b} and~\eqref{eq:Dzdef},
  a $\ker A\oplus\ker A$-valued $\lambda$-eigenspinor~$\psi_0$ on~$Z_r$
  has to satisfy the linear ordinary differential equation
  \begin{equation*}
    \lambda\psi_0
    =
    \begin{pmatrix}
      \gamma\,\frac\del{\del t}
      &\frac{1-z}r\,\omega\bigl(\frac tr\bigr)\gamma\\
      -\frac{1-z}r\,\omega\bigl(\frac tr\bigr)\gamma
      &\gamma\,\frac\del{\del t}
    \end{pmatrix}
    \psi_0\;.
  \end{equation*}
  The space of solutions is spanned by spinors of the form
  \begin{equation}\label{3.10}
      \begin{pmatrix}
        e^{-\lambda t\gamma}\cos(\alpha(t))\,h_0\\
        e^{-\lambda t\gamma}\sin(\alpha(t))\,h_0
      \end{pmatrix}
      \in C^\infty([-r,r],\ker A\oplus\ker A)
  \end{equation}
  with~$h_0\in\ker A$ and~$\alpha_0\in\R$.
  Prescribing~$\psi_0(-r)=\bigl(\begin{smallmatrix}\psi_1(-r)\\\psi_2(-r)\end{smallmatrix}\bigr)$ at~$t=-r$,
  the value at~$t=r$ is given by
  \begin{equation}\label{3.10a}
    \psi_0(r)=
    \begin{pmatrix}
      \psi_1(r)\\\psi_2(r)
    \end{pmatrix}
    =e^{-2\lambda r\gamma}
    \begin{pmatrix}
      \phantom{-}\sin\frac{z\pi}2&\cos\frac{z\pi}2\\[\smallskipamount]
      -\cos\frac{z\pi}2&\sin\frac{z\pi}2
    \end{pmatrix}
    \begin{pmatrix}
      \psi_1(-r)\\\psi_2(-r)
    \end{pmatrix}\;.
  \end{equation}

  Let us now consider the case~$\lambda=0$.
  By construction of the deformation of~$D$ in Section~\ref{Abs3.2},
  the sections~$s$ and~$c_{v_\pm}s$ are harmonic on~$M_{\pm,\ell,r}\setminus Z_r$
  for all~$z\in[0,1]$.
  Arguing by partial integration as in~\eqref{eq:kerAest1},
  we see that the first component of~$\psi_0(\pm r)$
  must be perpendicular to~$\gamma s|_{X\times\{\pm r\}}$
  and~$\gamma c_{v_\pm}s|_{X\times\{\pm r\}}$.
  Combining this with our choice of boundary conditions on~$\del Z'_r$,
  we have
  \begin{equation}\label{3.11}
    \psi_0(-r)\in \lagr_{D_-}\oplus \lagr_{D_+}^\perp\qquad\text{and}\qquad
    \psi_0(r)\in \lagr_{D_+}\oplus \lagr_{D_-}^\perp\;.
  \end{equation}

  We will show that~$\psi_0(r)=0$ is the only solution to~\eqref{3.10a}
  and~\eqref{3.11}.
  Recall that by~\eqref{eq:cvpms}, we have~$c_{v_+}s=e^{\pm\thet\gamma}c_{v_-}s$.
  By~\eqref{3.11}, there are~$v$, $w$, $x$, $y\in\R$ such that
  \begin{equation*}
    \psi_0(-r)=
    \begin{pmatrix}
      vs+wc_{v_-}s\\x\gamma s+y\gamma c_{v_+}s
    \end{pmatrix}\;.
  \end{equation*}
  By~\eqref{3.10a} and~\eqref{eq:cvpms}, this implies that
  \begin{equation*}
    \psi_0(r)=
    \begin{pmatrix}
      \sin\frac{z\pi}2\,\bigl(vs+we^{\mp\thet\gamma}c_{v_+}s\bigr)
      +\cos\frac{z\pi}2\bigl(x\gamma s+y\gamma c_{v_+}s\bigr)\\[\smallskipamount]
      -\cos\frac{z\pi}2\,\bigl(vs+wc_{v_-}s\bigr)
      +\sin\frac{z\pi}2\,\bigl(x\gamma s+y\gamma e^{\pm\thet\gamma}c_{v_-}s\bigr)
    \end{pmatrix}\;.
  \end{equation*}
  This is an element of~$\lagr_{D_+}\oplus \lagr_{D_-}^\perp$ if and only if the
  following four conditions hold.
  \begin{align*}
    x\cos\frac{z\pi}2&=0\;,&
    \mp w\sin\thet\sin\frac{z\pi}2+y\cos\frac{z\pi}2&=0\;,\\
    -v\cos\frac{z\pi}2&=0\;,&
    -w\cos\frac{z\pi}2\mp y\sin\thet\sin\frac{z\pi}2&=0\;.
  \end{align*}
  Because~$\sin\thet\ne0$ by assumption,
  we conclude that~$w=y=0$ independent of~$\thet$ and~$z$.
  Hence, the only nonzero solutions arise when~$z=1$.
  They correspond to the expected solutions~$s\in\ker D_{M,\ell,r}$
  and~$\gamma s\in\ker D_{Z'_r}$.
  
  For~$z$ close to~$1$ and~$\abs\lambda$ small,
  the $\lambda$-eigenspinors of~$\D_z$
  are still close to~$\ker\D_1$.
  So let~$x+v$ be a $\lambda$-eigenspinor of~$\D_z$ with~$x\in\ker\D_1$
  and~$v\perp\ker\D_1$ small,
  then by~\eqref{eq:Dzdef}, we compute
  \begin{equation*}
    \lambda\norm x^2_{L^2}=\bigl\<\bigl(\D_1-(1-z)G\bigr)(x+v),x\bigr\>_{L^2}
    =-(1-z)\<G(x+v),x\>_{L^2}\;.
  \end{equation*}
  The kernel of~$\D_1$ is spanned by~$\binom s0$ and~$\binom 0{\gamma s}$.
  By~\eqref{eq:Gdef}, using~$\omega\le0$,
  we see that for~$1-z$ sufficiently small,
  one of the eigenvalues is positive and one is negative.
  Hence, for~$z\nearrow 1$,
  one eigenvalue approaches~$0$ from above and one from below.
  We get~\eqref{3.8} and hence Theorem~\ref{thm3.4}, because
  \begin{equation*}
    2\SF\bigl((\D_z)_{z\in[0,1]}\bigr)=-2=\dim\ker \D_0-\dim\ker \D_1\;.\qedhere
  \end{equation*}
\end{proof}

\subsection{The Spectral Flow of the Deformation}
Let~$D_{\ell,r,w}$ be a deformation of the spin Dirac
operator on the manifolds~$M_{\ell,r}$ constructed as in~\eqref{ModifiedDirac2}.
We show that~$D_{\ell,r,w}$ has one-dimensional kernel for all~$w\in[0,1]$
if~$\ell$ and~$r$ are sufficiently large.

\begin{prop}\label{prop3.13}
  There exists a constant~$C$ such that for~$\ell$ and~$r$ sufficiently large,
  $$\eta(D_{\ell,r,0})=\eta(D_{\ell,r,1})+O\bigl(e^{-\ell/C}\bigr)
  \qquad\text{and}\qquad \dim\ker(D_{\ell,r,w})%
  =1\quad\text{for all }w\in[0,1]\;.$$
\end{prop}

\begin{proof}
  We fix~$\ell\gg 1$ sufficiently large and~$r=\ell$ so that the conclusion
  of Proposition~\ref{prop:smallev} holds.
  The variation of the $\eta$-invariant in~$w$
  consists of a local variation term in~$\R$
  and a spectral flow contribution in~$\Z$.
  We can apply Theorem~\ref{Thm2.1} to~$M_{2\ell-1,1}$
  and compare with~$M_{\ell,\ell}$ to see that
  the local variation is of order~$O(e^{-\ell/C})$. %
  The spectral flow contribution vanishes because of the second claim.

  It therefore remains to establish the second claim.
  By the construction of~$D_{\ell,r,w}$ following~\eqref{ModifiedDirac2},
  we have~$s\in\ker D_{\ell,r,w}$ for all~$w\in[0,1]$ and
  all sufficiently large~$\ell$ and~$r$.
  We also know that~$D_{\ell,r,w}-D_{\ell,r,1}$ is an operator of degree~$0$
  and of order~$O(e^{-\ell/C})$.
  Hence, if we have chosen~$\ell\gg1$ large enough and~$r=\ell$,
  the Rayleigh quotient of~$D_{\ell,r,1}$ on~$\ker D_{\ell,r,w}$
  is smaller than~$\frac c{2\ell}$.
  Hence, $\dim\ker D_{\ell,r,w}>1$ would
  contradict Proposition~\ref{prop:smallev}.
\end{proof}

\subsection{The \texorpdfstring{$\eta$}{eta}-Invariant
of the Spin Dirac Operator}\label{Abs3.6}
By Remark~\ref{rem:par},
the space of real harmonic spinors on~$\X$ is the four-dimensional
vector space spanned by~$s$, $c_{\anglen_-}s$, $c_{\anglex_-}s$ and~$\gamma s$.
The Lagrangians~$\lagr_{D_-}$ and~$\lagr_{D_+}$ are given by equation~\eqref{3.23}.

\begin{thm}\label{thm3.7}
  Let~$\rho=\pi-2\thet$. Then
  \begin{equation*}
    m_{\ker A}(\lagr_{D_+},\lagr_{D_-})=\frac\rho\pi\;.
  \end{equation*}
\end{thm}

\begin{proof}
Let~$A_\pm$ denote the reflections along~$\lagr_{D_\pm}$,
then~$-A_+A_-$ acts as~$-\id$ on the subspace spanned by~$s$ and~$\gamma s$,
and rotates the subspace spanned by~$c_{\anglen_-}s$ and~$c_{\anglex_-}s$
by~$\pi-2\thet=\rho$.
Hence, the eigenvalues of~$-A_+A_-$ on the $(-i)$-eigenspace of~$\gamma$
are~$-1$ and~$e^{-\rho i}$, and by~\eqref{Mdef},
the result follows.
\end{proof}

\subsection{A Proof of Theorem~\texorpdfstring{\ref{GluingThm}}{1}}
We can now prove the main result of this paper.

\begin{dfn}\label{Def:nupm}
  Let~$B_{M_{\pm,\ell}}$, $D_{M_{\pm,\ell}}$ denote the odd signature operator
  and the modified Dirac operator on~$M_{\pm,\ell}$, then define
  \begin{equation*}
    \bar\nu(M_\pm,g)
    =\lim_{\ell\to\infty}\bigl(3\eta_{\mathrm{APS}}(B_{M_\pm,\ell};\lagr_{B_\pm})
    -24\eta_{\mathrm{APS}}(D_{M_\pm,\ell};\lagr_{D_\pm})\bigr)\;.
  \end{equation*}
\end{dfn}

\begin{proof}[Proof of Theorem~\ref{GluingThm}]
  We note that the parameter~$r$ used in the gluing formula above does
  not affect the $\eta$-invariants~$\eta_{\APS}(D_{M_\pm,\ell,r};\lagr_{D_\pm})$
  by the variation formula for $\eta$-invariants on manifolds with boundary
  in~\cite{BCh4} and~\cite{daifreed}.
  This is relevant if~$k_+>2$ or~$k_->2$.
  By Definition~\ref{MainDef} of~$\bar\nu(M,g)$,
  our result follows by
  combining Theorems~\ref{thm2.6}, \ref{thm3.4}, \ref{thm3.7}
  and Proposition~\ref{prop3.13}.
\end{proof}

\begin{rmk}\label{BarNuBoundedRem}
  By construction, $\rho\in(-\pi,\pi)$.
  The configuration angles~$\alpha^-_1$, \dots, $\alpha^-_{19}$
  in Definition~\ref{def:angles} are either~$0$, $\pi$,
  or occur in pairs of angles with opposite sign.
  Of each such pair, at most one angle belongs to~$(\pi-\abs\rho,\pi)$,
  see the proof of Proposition~\ref{Mprop} for details.
  In particular,
	$$0\le\#\bigl\{\,j
		\bigm|\alpha_j^-\in\{\pi-\abs\rho,\pi\}\,\bigr\}
	+2\#\bigl\{\,j
	\bigm|\alpha_j^-\in(\pi-\abs\rho,\pi)\,\bigr\}\le 19\;.$$
  Hence by Definition~\ref{def2.6}, $m_\rho(\latc;N_+,N_-)\in(-3,54)$
  if~$\rho>0$, and~$m_\rho(\latc;N_+,N_-)\in(-54,3)$ if~$\rho<0$.
  If~$\rho=0$, then~$m_\rho(\latc;N_+,N_-)=0$.
  Because~$\rho\in(-\pi,\pi)$, we get the estimate
	$$-75<-72\frac\rho\pi+3m_\rho(\latc;N_+,N_-)<75\;.$$
  In particular,
  we cannot use the extra-twisted connected sum construction
  with~$k_\pm\le 2$ to produce families of $G_2$-manifolds
  with infinitely many different values of~$\bar\nu$.
\end{rmk}

\end{document}

